\documentclass[12pt]{article}

\usepackage{amsfonts,amsmath,amsxtra,amsthm,amssymb,tikz,latexsym}
\usepackage{mathrsfs}

\usepackage[colorlinks=true,citecolor=black,linkcolor=black,urlcolor=blue]{hyperref}

\definecolor{darkblue}{rgb}{0.0,0,0.7} 

\theoremstyle{plain}
\newtheorem{theorem}{Theorem}[section]
\newtheorem{lemma}[theorem]{Lemma}
\newtheorem{corollary}[theorem]{Corollary}
\newtheorem{proposition}[theorem]{Proposition}

\theoremstyle{definition}

\newtheorem{example}[theorem]{Example}
\theoremstyle{remark}
\newtheorem{remark}[theorem]{Remark}
\numberwithin{equation}{section}

\newcommand{\onep}{1'}
\newcommand{\twop}{2'}
\newcommand{\threep}{3'}
\newcommand{\fourp}{4'}
\newcommand{\fivep}{5'}

\newcommand{\iprime}{i'}
\newcommand{\jprime}{j'}

\newcommand{\boldtwo}{\mathbf{2}}

\newcommand{\boldtwop}{\mathbf{2'}}

\newcommand{\boldi}{\boldsymbol{i}}

\newcommand{\wt}{\mathrm{wt}}
\newcommand{\nz}{\mathrm{nz}}

\usepackage{graphics, graphicx, xcolor}
\definecolor{darkred}{rgb}{0.7,0,0} 
\newcommand{\darkred}{\color{darkred}} 
\newcommand{\defn}[1]{\emph{\darkred #1}} 

\usepackage[vcentermath, enableskew]{youngtab}
\usepackage{lipsum,  verbatim}
\usepackage{mathtools}
\usepackage{txfonts}

\title{\bf Crystal Analysis of type $C$ Stanley Symmetric Functions}

\author{Graham Hawkes \qquad Kirill Paramonov \qquad Anne Schilling\thanks{Partially supported by
NSF grant DMS--1500050.}\\
\small Department of Mathematics\\[-0.8ex]
\small University of California\\[-0.8ex]
\small One Shields Avenue\\[-0.8ex]
\small Davis, CA 95616-8633, U.S.A.\\
\small\tt \{hawkes,kirill,anne\}@math.ucdavis.edu
}

%
%
%

\date{
\small Mathematics Subject Classifications: 05E05, 20G42}

\begin{document}

\maketitle

\begin{abstract}
Combining results of T.K. Lam and J. Stembridge, the type $C$ Stanley symmetric function $F_w^C(\mathbf{x})$, 
indexed by an element $w$ in the type $C$ Coxeter group, has a nonnegative integer expansion in terms of Schur 
functions. We provide a crystal theoretic explanation of this fact and give an explicit combinatorial description of the 
coefficients in the Schur expansion in terms of highest weight crystal elements.

\bigskip\noindent \textbf{Keywords:} Stanley symmetric functions, crystal bases, Kra\'skiewicz insertion, 
mixed Haiman insertion, unimodal tableaux, primed tableaux
\end{abstract}

\section{Introduction}

Schubert polynomials of type $B$ and type $C$ were independently introduced by Billey and Haiman~\cite{Billey.Haiman.1995} 
and Fomin and Kirillov~\cite{Fomin.Kirillov.1996}. Stanley symmetric functions~\cite{Stanley.1984} are stable limits 
of Schubert polynomials, designed to study properties of reduced words of Coxeter group elements. In his Ph.D. thesis,
T.K. Lam~\cite{Lam.1995} studied properties of Stanley symmetric functions of types $B$ (and similarly $C$) and $D$.
In particular he showed, using Kra\'skiewicz insertion~\cite{Kraskiewicz.1989,Kraskiewicz.1995}, that the type $B$ Stanley 
symmetric functions have a positive integer expansion in terms of $P$-Schur functions. On the other hand, 
Stembridge~\cite{Stembridge.1989} proved that the $P$-Schur functions expand positively in terms of Schur functions. 
Combining these two results, it follows that Stanley symmetric functions of type $B$ (and similarly type $C$)
have a positive integer expansion in terms of Schur functions.

Schur functions $s_\lambda(\mathbf{x})$, indexed by partitions $\lambda$, are ubiquitous in combinatorics and 
representation theory. They are the characters of the symmetric group and can also be interpreted as characters of type 
$A$ crystals. In~\cite{Morse.Schilling.2016}, this was exploited to provide a combinatorial interpretation in terms of 
highest weight crystal elements of the coefficients in the Schur expansion of Stanley symmetric functions in type $A$.
In this paper, we carry out a crystal analysis of the Stanley symmetric functions $F_w^C(\mathbf{x})$ of type $C$, indexed
by a Coxeter group element $w$. In particular, we use Kra\'skiewicz insertion~\cite{Kraskiewicz.1989,Kraskiewicz.1995}
and Haiman's mixed insertion~\cite{Haiman.1989} to find a crystal structure on primed tableaux, which in turn implies
a crystal structure $\mathcal{B}_w$ on signed unimodal factorizations of $w$ for which $F^C_w(\mathbf{x})$ is a character.
Moreover, we present a type $A$ crystal isomorphism $\Phi \colon \mathcal{B}_w \rightarrow \bigoplus_\lambda 
\mathcal{B}_{\lambda}^{\oplus g_{w\lambda}}$ for 
some combinatorially defined nonnegative integer coefficients $g_{w\lambda}$; here $\mathcal{B}_\lambda$ is the
type $A$ highest weight crystal of highest weight $\lambda$ . This implies the desired decomposition 
$F^C_w(\mathbf{x}) = \sum_\lambda g_{w\lambda} s_\lambda (\mathbf{x})$ (see Corollary~\ref{corollary.main2})
and similarly for type $B$.

The paper is structured as follows. In Section~\ref{section.background}, we review type $C$ Stanley symmetric functions 
and type $A$ crystals. In Section~\ref{section.isomorphism} we describe our crystal isomorphism by combining a slight
generalization of the Kra\'skiewicz insertion~\cite{Kraskiewicz.1989,Kraskiewicz.1995} and Haiman's mixed 
insertion~\cite{Haiman.1989}. The main result regarding the crystal structure under Haiman's mixed insertion 
is stated in Theorem~\ref{theorem.main2}. The combinatorial interpretation of the coefficients $g_{w\lambda}$ 
is given in Corollary~\ref{corollary.main2}. In Section~\ref{section.semistandard}, we provide an alternative
interpretation of the coefficients $g_{w\lambda}$ in terms of semistandard unimodal tableaux.
Appendices~\ref{section.proof main2} and~\ref{section.proof main3} are reserved for the proofs of
Theorems~\ref{theorem.main2} and~\ref{theorem.main3}.

\subsection*{Acknowledgments}
We thank the anonymous referee for pointing out reference~\cite{Liu.2017} and furthermore the connections between 
our crystal operators and those obtained by intertwining crystal operators on words with Haiman's symmetrization of 
shifted mixed insertion~\cite[Section 5]{Haiman.1989} and the conversion map~\cite[Proposition~14]{SW.2001} as outlined 
in Remark~\ref{remark.doubling}. We thank Toya Hiroshima for pointing out that the definition of the reading word of
a primed tableau was misleading in a previous version of this paper.

\section{Background}
\label{section.background}

\subsection{Type $C$ Stanley symmetric functions}

The \defn{Coxeter group} $W_C$ of type $C_n$ (or type $B_n$), also known as the hyperoctahedral group or the group 
of signed permutations, is a finite group generated by $\{s_0, s_1, \ldots, s_{n-1}\}$ subject to the quadratic relations
$s_i^2 = 1$ for all $i \in I = \{0,1,\ldots,n-1\}$, the commutation relations $s_i s_j = s_j s_i$ provided $|i-j|>1$, and the
braid relations $s_i s_{i+1} s_i = s_{i+1} s_i s_{i+1}$ for all $i>0$ and $s_0 s_1 s_0 s_1 = s_1 s_0 s_1 s_0$.

It is often convenient to write down an element of a Coxeter group as a sequence of indices of $s_i$ in the product 
representation of the element. For example, the element $w = s_2 s_1 s_2 s_1 s_0 s_1 s_0 s_1$ is represented by the 
word ${\bf w} = 2120101$. A word of shortest length $\ell$ is referred to as a \defn{reduced word} and $\ell(w):=\ell$ 
is referred as the length of $w$. The set of all reduced words of the element $w$ is denoted by $R(w)$.

\begin{example}
The set of reduced words for $w = s_2 s_1 s_2 s_0 s_1 s_0$ 
is given by
$$R(w) = \{ 210210, 212010, 121010, 120101, 102101 \}.$$
\end{example}

We say that a reduced word $a_1 a_2 \ldots a_\ell$ is \defn{unimodal} if there exists an index $v$, such that 
$$a_1 > a_2 > \cdots > a_v < a_{v+1} < \cdots < a_\ell.$$

Consider a reduced word $\textbf{a} = a_1 a_2 \ldots a_{\ell(w)}$ of a Coxeter group element $w$. 
A \defn{unimodal factorization} of $\textbf{a}$ is a factorization 
$\mathbf{A} = (a_1 \ldots a_{\ell_1}) (a_{\ell_1+1} \ldots a_{\ell_2}) \cdots (a_{\ell_r + 1} \ldots a_L)$ such that each factor 
$(a_{\ell_i+1} \ldots a_{\ell_{i+1}})$ is unimodal. Factors can be empty.

For a fixed Coxeter group element $w$, consider all reduced words $R(w)$, and denote the set of all unimodal 
factorizations for reduced words in $R(w)$ as $U(w)$. 
Given a factorization $\mathbf{A} \in U(w)$, define the \defn{weight} of a factorization $\wt(\mathbf{A})$ to be the vector 
consisting of the number of elements in each factor. Denote by $\nz(\mathbf{A})$ the number of non-empty factors of 
$\mathbf{A}$. 

\begin{example}
For the factorization $\mathbf{A} = (2102)()(10) \in U(s_2 s_1 s_2 s_0 s_1 s_0)$, we have $\wt(\mathbf{A}) = (4,0,2)$ 
and $\nz(\mathbf{A}) = 2$.
\end{example}

Following~\cite{Billey.Haiman.1995, Fomin.Kirillov.1996, Lam.1995}, the \defn{type $C$ Stanley symmetric function} 
associated to $w\in W_C$ is defined as
\begin{equation}
\label{equation.StanleyC}
	F^C_w(\mathbf{x}) = \sum_{\mathbf{A} \in U(w)} 2^{\nz(\mathbf{A})} 
        \mathbf{x}^{\wt(\mathbf{A})}.
\end{equation}
Here $\mathbf{x} = (x_1, x_2, x_3, \ldots)$ and $\mathbf{x}^{\mathbf{v}} = x_1^{v_1} x_2^{v_2} x_3^{v_3} \cdots$.
It is not obvious from the definition why the above functions are symmetric. We refer reader to~\cite{Billey.2014},
where this fact follows easily from an alternative definition.
 
\defn{Type $B$ Stanley symmetric functions} are also labeled by $w\in W_C$ (as the type $B$ and $C$ Coxeter
groups coincide) and differ from $F_w^C(w)$ by an overall factor $2^{-o(w)}$
\[
	F_w^B(\mathbf{x}) = 2^{-o(w)} F_w^C(\mathbf{x}),
\] 
where $o(w)$ is the number of zeroes in a reduced word for $w$. Loosely speaking, our combinatorial interpretation 
in the type $C$ case respects this power of 2 -- that is, we will  get a valid combinatorial interpretation in the type $B$ 
case by dividing by $2^{o(w)}$. 

\subsection{Type $A$ crystal of words}

Crystal bases~\cite{kashiwara.1994} play an important role in many areas of mathematics. For example, they make it 
possible to analyze representation theoretic questions using combinatorial tools. Here we only review the crystal of words 
in type $A_n$ and refer the reader for more background on crystals to~\cite{Bump.Schilling.2017}.

Consider the set of words $\mathcal{B}_n^h$ of length $h$ in the alphabet $\{1,2,\ldots,n+1\}$.
We impose a crystal structure on $\mathcal{B}_n^h$ by defining lowering operators $f_i$ and raising operators $e_i$
for $1\leqslant i \leqslant n$ and a weight function. The weight of $\mathbf{b} \in \mathcal{B}_n^h$ is the tuple
$\wt(\mathbf{b}) = (a_1,\ldots, a_{n+1})$, where $a_i$ is the number of letters $i$ in $\mathbf{b}$.
The crystal operators $f_i$ and $e_i$ only depend on the letters $i$ and $i+1$ in $\mathbf{b}$.
Consider the subword $\mathbf{b}^{\{i,i+1\}}$ of $\mathbf{b}$ consisting only of the letters $i$ and $i+1$.
Successively bracket any adjacent pairs $(i+1) i$ and remove these pairs from the word. The resulting word is
of the form $i^a (i+1)^b$ with $a,b\geqslant 0$. Then $f_i$ changes this subword within $\mathbf{b}$ to $i^{a-1} (i+1)^{b+1}$
if $a>0$ leaving all other letters unchanged and otherwise annihilates $\mathbf{b}$. The operator $e_i$ changes this 
subword within $\mathbf{b}$ to $i^{a+1} (i+1)^{b-1}$ if $b>0$ leaving all other letters unchanged and otherwise annihilates 
$\mathbf{b}$. 

We call an element $\mathbf{b}\in \mathcal{B}_n^h$ \defn{highest weight} if $e_i(\mathbf{b})=\mathbf{0}$ for all
$1\leqslant i\leqslant n$ (meaning that all $e_i$ annihilate $\mathbf{b}$).

 \begin{theorem} \cite{Kashiwara.Nakashima.1994}
 A word $\mathbf{b} = b_1 \ldots b_h \in \mathcal{B}_n^h$ is highest weight if and only if it is a Yamanouchi word.
 That is, for any index $k$ with $1 \leqslant k \leqslant h$ the weight of a subword $b_k b_{k+1} \ldots b_h$ is a partition.
 \end{theorem}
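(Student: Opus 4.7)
The plan is to reduce the equivalence to a single-index statement and then match the combinatorics of the bracketing procedure to the partition condition on suffixes. Both conditions factor nicely: $\mathbf{b}$ is highest weight iff $e_i(\mathbf{b})=\mathbf{0}$ for each individual $i$, while $\mathbf{b}$ is Yamanouchi iff for every suffix $b_k\ldots b_h$ and every $i\in\{1,\ldots,n\}$ the count of $i$'s is at least the count of $(i+1)$'s. So I would fix $i$ and show the equivalence
\[
e_i(\mathbf{b})=\mathbf{0} \;\Longleftrightarrow\; \#\{j\geqslant k : b_j=i\} \;\geqslant\; \#\{j\geqslant k : b_j=i+1\} \text{ for every } k,
\]
and then take the conjunction over $i$.

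The key technical lemma I would establish is a characterization of the bracketing: after erasing all letters other than $i,i+1$ and successively deleting adjacent $(i+1)i$ pairs, the number of unbracketed $(i+1)$'s in the resulting word $i^a(i+1)^b$ equals
\[
b \;=\; \max_{1\leqslant k\leqslant h+1}\Bigl(\#\{j\geqslant k : b_j=i+1\}-\#\{j\geqslant k : b_j=i\}\Bigr).
\]
I would prove this by the standard parenthesis-matching argument: treat each $(i+1)$ as an opening bracket and each $i$ as a closing bracket, note that the iterated removal of adjacent $(i+1)i$ pairs is Church-Rosser and terminates at the standard reduced form, and then observe that an opening bracket survives the reduction precisely when it is not matched by any closing bracket to its right. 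The "rightward mismatch" count is exactly the suffix excess displayed above.

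With this lemma in hand, both directions are immediate. If $\mathbf{b}$ is Yamanouchi, then every suffix excess is $\leqslant 0$, so $b=0$ in the reduced word of each $\{i,i+1\}$-subword, hence $e_i$ annihilates $\mathbf{b}$ for every $i$. Conversely, if $e_i(\mathbf{b})=\mathbf{0}$ for all $i$, then $b=0$ in each reduced subword, so the suffix excess is never positive for any $i$, and every suffix weight is weakly decreasing, i.e., a partition.

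There is no real obstacle here beyond verifying the bracketing lemma cleanly; the only subtle point is that the bracketing rule given in the excerpt removes adjacent pairs in the word rather than in the $\{i,i+1\}$-subword, so I would begin by noting that intermediate non-$\{i,i+1\}$ letters play no role and the two procedures yield the same reduced form. Once this reduction is made, the argument is a standard parenthesis-matching computation.
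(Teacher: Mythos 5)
Your proposal is correct. Note that the paper offers no proof of this statement at all --- it is quoted as a known result of Kashiwara--Nakashima --- so there is nothing internal to compare against; your argument is the standard self-contained one. The reduction to a fixed $i$, the identification of the number of unbracketed $(i+1)$'s with the maximal suffix excess $\max_k\bigl(\#\{j\geqslant k: b_j=i+1\}-\#\{j\geqslant k: b_j=i\}\bigr)$ via parenthesis matching (with confluence of the pair-removal guaranteeing a well-defined reduced form $i^a(i+1)^b$), and the observation that the Yamanouchi condition is exactly the conjunction over $i$ of the nonpositivity of these suffix excesses, together give a complete proof. One tiny remark: the paper's bracketing rule is already stated on the $\{i,i+1\}$-subword, so the reduction you flag as a subtlety is built into the definition, but noting it does no harm.
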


\begin{example}
The word $85744234654333222211111$ is highest weight.
\end{example}

Two crystals $\mathcal{B}$ and $\mathcal{C}$ are said to be \defn{isomorphic} if there exists a bijective map 
$\Phi \colon \mathcal{B} \rightarrow \mathcal{C}$ that preserves the weight function and commutes with the crystal 
operators $e_i$ and $f_i$. A \defn{connected component} $X$ of a crystal is a set of elements where for any two
$\mathbf{b},\mathbf{c} \in X$ one can reach $\mathbf{c}$ from $\mathbf{b}$ by applying a sequence of $f_i$ and $e_i$.

\begin{theorem} \cite{Kashiwara.Nakashima.1994}
Each connected component of $\mathcal{B}_n^h$ has a unique highest weight element.
Furthermore, if $\mathbf{b}, \mathbf{c} \in \mathcal{B}_n^h$ are highest weight elements such that
$\wt(\mathbf{b}) = \wt(\mathbf{c})$, then the connected components generated by $\mathbf{b}$ and 
$\mathbf{c}$ are isomorphic.
\end{theorem}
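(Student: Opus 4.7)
The plan is to prove this by establishing a compatibility between the crystal operators and the Robinson--Schensted insertion, in the spirit of the standard proof due to Kashiwara--Nakashima (and re-worked by many others, see e.g.\ Chapter~8 of Bump--Schilling). To each word $\mathbf{b}\in\mathcal{B}_n^h$ one associates a pair $(P(\mathbf{b}),Q(\mathbf{b}))$ of tableaux of the same shape, where $P(\mathbf{b})$ is a semistandard Young tableau with entries in $\{1,\dots,n+1\}$ (the \emph{insertion tableau}) and $Q(\mathbf{b})$ is a standard Young tableau with entries $\{1,\dots,h\}$ (the \emph{recording tableau}). The central claim to establish, step (A), is that the crystal operators $e_i,f_i$ on $\mathcal{B}_n^h$ act on $P(\mathbf{b})$ alone and leave $Q(\mathbf{b})$ unchanged. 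In particular, two words in the same connected component have the same recording tableau.

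To prove (A), I would first check that the $\pm$-bracketing rule for the $i,i+1$ letters is invariant under elementary Knuth transformations: the local moves $bac\leftrightarrow bca$ (for $a\leqslant b<c$) and $acb\leftrightarrow cab$ (for $a<b\leqslant c$) only involve letters with varying relations to $i,i+1$, and in each case the bracketing on the $\{i,i+1\}$-subword is preserved. Because $f_i$ and $e_i$ act only on the unbracketed letters, it follows that Knuth-equivalent words map to Knuth-equivalent words under $f_i,e_i$. Since $P(\mathbf{b})$ is the unique tableau Knuth-equivalent to the reading word of $\mathbf{b}$, we get that $f_i$ and $e_i$ commute with $\mathbf{b}\mapsto P(\mathbf{b})$. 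That $Q$ is unchanged is then immediate by verifying it on any one representative of each Knuth class, or equivalently by showing that $f_i$ and $e_i$ commute with Schensted column insertion on the left.

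With (A) in hand, the uniqueness of the highest weight element in a connected component is step (B): I would identify the highest weight words in $\mathcal{B}_n^h$ as exactly the Yamanouchi words (cited earlier), and then show that a word is Yamanouchi if and only if its insertion tableau $P$ is the \emph{superstandard tableau} $T_\lambda$ of some shape $\lambda$ --- the tableau whose $i$-th row consists entirely of $i$'s. Indeed, the reading word of $T_\lambda$ is clearly Yamanouchi, and conversely, a Yamanouchi word inserts into $T_\lambda$ by a direct induction on length. Hence within a connected component, which by (A) is the set of words with a fixed recording tableau $Q$, a highest weight element corresponds to a $P$ of the form $T_\lambda$; since $P$ and $Q$ must have the same shape, this $\lambda$ is forced to be the shape of $Q$, and thus there is exactly one such element.

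For step (C), the isomorphism of components with the same highest weight, suppose $\mathbf{b},\mathbf{c}$ are highest weight with $\wt(\mathbf{b})=\wt(\mathbf{c})=\lambda$. By (B), $P(\mathbf{b})=P(\mathbf{c})=T_\lambda$, and $Q(\mathbf{b}),Q(\mathbf{c})$ are both standard tableaux of shape $\lambda$. I then define $\Phi$ from the component of $\mathbf{b}$ to the component of $\mathbf{c}$ by $\Phi(\mathbf{b}')=\mathrm{RSK}^{-1}(P(\mathbf{b}'),Q(\mathbf{c}))$; this is a bijection since the components are characterised by the recording tableau, it preserves weight because weight is determined by $P$, and it commutes with $e_i,f_i$ by (A). The main obstacle is step (A): proving the compatibility of the bracketing rule with Knuth moves (or, equivalently, with a single column insertion step) requires a finite but slightly tedious case analysis on the relative positions of the inserted letter and the letters $i,i+1$ in the current column. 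Once that verification is done, steps (B) and (C) are essentially formal.
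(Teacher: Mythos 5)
This statement is quoted in the paper from Kashiwara--Nakashima as background; the paper gives no proof of its own, so there is nothing internal to compare against. Your proposal is the standard RSK-compatibility argument and is essentially sound; it also runs parallel to the strategy the paper itself uses later for the shifted analogue (Theorem~\ref{theorem.main0}), where constancy of the recording tableau on components is deduced from invariance of the insertion tableau under (shifted) Knuth moves. Two of your ``immediate/formal'' claims do hide real, if routine, content. First, ``that $Q$ is unchanged is then immediate'' is not quite: knowing $P(f_i\mathbf{b})=f_i(P(\mathbf{b}))$ controls only the final shape, whereas $Q$ records the whole chain of shapes of $P(b_1\cdots b_s)$; one needs the prefix compatibility $(f_i\mathbf{b})^{(s)}=f_i(\mathbf{b}^{(s)})$ for $s\geqslant m$ (where $m$ is the position of the changed letter), which holds because an $i$ bracketed in $\mathbf{b}$ by an earlier $i+1$ remains bracketed in every prefix containing it --- this is exactly the argument the paper runs in its second proof of Theorem~\ref{theorem.main0}. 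Second, in step (C) you assert that ``the components are characterised by the recording tableau,'' but (A) only shows each component is \emph{contained in} a fiber of $Q$; to get equality you should add that every component contains at least one highest weight element (apply the $e_i$ repeatedly, using finiteness), so that a fiber of $Q$, having exactly one highest weight element by (B), cannot split into several components. With those two points made explicit, the argument for uniqueness and for the isomorphism $\Phi(\mathbf{b}')=\mathrm{RSK}^{-1}(P(\mathbf{b}'),Q(\mathbf{c}))$ goes through.
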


We denote a connected component with a highest weight element of highest weight $\lambda$ by
$\mathcal{B}_\lambda$. The \defn{character} of the crystal $\mathcal{B}$ is defined to be a polynomial
in the variables $\mathbf{x}=(x_1,x_2,\ldots,x_{n+1})$
$$\chi_{\mathcal{B}} (\mathbf{x}) = \sum_{\mathbf{b} \in \mathcal{B}} \mathbf{x}^{\wt(\mathbf{b})}.$$

\begin{theorem}[\cite{Kashiwara.Nakashima.1994}]
The character of $\mathcal{B}_{\lambda}$ is equal to the Schur polynomial $s_\lambda (\mathbf{x})$ (or Schur 
function in the limit $n\to \infty$).
\end{theorem}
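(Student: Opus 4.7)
The plan is to realize $\mathcal{B}_\lambda$ as the crystal of semistandard Young tableaux of shape $\lambda$ and then recognize the resulting character as the combinatorial Schur polynomial. Concretely, I would proceed in four steps, relying on the two previously stated theorems (uniqueness of highest weight elements in a connected component, and Yamanouchi = highest weight).

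First, I would exhibit a canonical highest weight element of weight $\lambda = (\lambda_1,\ldots,\lambda_{n+1})$ in $\mathcal{B}_n^h$, namely the word
\[
u_\lambda = (n+1)^{\lambda_{n+1}} \, n^{\lambda_n} \cdots 2^{\lambda_2} \, 1^{\lambda_1},
\]
which is visibly Yamanouchi and hence highest weight by the Yamanouchi characterization recalled above. By the uniqueness theorem, the connected component containing $u_\lambda$ is, up to isomorphism, the crystal we have called $\mathcal{B}_\lambda$.

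Second, I would invoke RSK (row) insertion $\mathbf{b} \mapsto (P(\mathbf{b}), Q(\mathbf{b}))$, sending a word in $\mathcal{B}_n^h$ to a pair consisting of a semistandard tableau $P$ and a standard tableau $Q$ of the same shape, with entries of $P$ in $\{1,\ldots,n+1\}$. The crucial classical input is that the crystal operators $e_i, f_i$ commute with insertion: if $f_i(\mathbf{b}) \ne \mathbf{0}$ then $Q(f_i(\mathbf{b})) = Q(\mathbf{b})$ and $P(f_i(\mathbf{b})) = f_i(P(\mathbf{b}))$, where the operator on the right is the induced action on tableaux via the row reading word (see e.g.\ the treatment in Bump–Schilling). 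Equivalently, two words lie in the same connected component of $\mathcal{B}_n^h$ precisely when they share the same recording tableau $Q$.

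Third, I would apply this to the component of $u_\lambda$. Since $Q(u_\lambda)$ is the super-standard tableau of shape $\lambda$, every word in this component has recording tableau $Q(u_\lambda)$ and hence the map $\mathbf{b} \mapsto P(\mathbf{b})$ is a weight-preserving bijection from $\mathcal{B}_\lambda$ onto the set $\mathrm{SSYT}(\lambda)$ of semistandard Young tableaux of shape $\lambda$ with entries in $\{1,\ldots,n+1\}$. Summing,
\[
\chi_{\mathcal{B}_\lambda}(\mathbf{x}) = \sum_{\mathbf{b}\in\mathcal{B}_\lambda} \mathbf{x}^{\wt(\mathbf{b})} = \sum_{P\in\mathrm{SSYT}(\lambda)} \mathbf{x}^{\wt(P)} = s_\lambda(\mathbf{x}),
\]
with the last equality being the combinatorial definition of the Schur polynomial.

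The main obstacle is establishing the compatibility of the crystal operators with RSK insertion. This is a delicate but standard result: one reduces it to invariance under elementary Knuth transformations and checks that each Knuth move preserves the bracketing pattern governing $e_i$ and $f_i$. Since this is a well-documented classical fact rather than new content, I would cite it from a standard reference and use it as a black box; the remainder of the argument is then a direct translation between the crystal side and the tableau side.
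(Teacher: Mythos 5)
The paper offers no proof of this statement: it is quoted verbatim as background from Kashiwara--Nakashima, so there is nothing internal to compare your argument against. Your proposal is the standard combinatorial proof via RSK, and it is essentially correct: $u_\lambda$ is indeed Yamanouchi of weight $\lambda$, the compatibility $Q(f_i\mathbf{b})=Q(\mathbf{b})$, $P(f_i\mathbf{b})=f_i(P(\mathbf{b}))$ is a legitimate classical black box, and injectivity of $\mathbf{b}\mapsto P(\mathbf{b})$ on the component follows at once from bijectivity of RSK together with constancy of $Q$. The one place you should slow down is the sentence ``two words lie in the same connected component precisely when they share the same recording tableau'': only the forward implication follows from the stated compatibility, and it is exactly the converse that you need for \emph{surjectivity} of $P$ onto $\mathrm{SSYT}(\lambda)$. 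To close it, take any $P_0\in\mathrm{SSYT}(\lambda)$, set $\mathbf{b}_0=\mathrm{RSK}^{-1}(P_0,Q(u_\lambda))$, raise $\mathbf{b}_0$ to a highest weight word $\mathbf{b}_1$, and argue that $\mathbf{b}_1=u_\lambda$; this uses that a highest weight (Yamanouchi) word has insertion tableau equal to the unique tableau $T_\lambda$ with all entries $i$ in row $i$ (e.g.\ because Knuth moves preserve the Yamanouchi property and the reading word of $P(\mathbf{b}_1)$ is Knuth-equivalent to $\mathbf{b}_1$), and that $P(u_\lambda)=T_\lambda$ as well, so RSK bijectivity forces $\mathbf{b}_1=u_\lambda$. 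A cosmetic point: $Q(u_\lambda)$ is generally not the superstandard tableau (already for $\lambda=(2,1)$ one gets $1,3$ in the first row and $2$ in the second), but nothing in your argument depends on which fixed standard tableau it is.
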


\section{Crystal isomorphism} 
\label{section.isomorphism}

In this section, we combine a slight generalization of the Kra\'skiewicz insertion, reviewed in 
Section~\ref{section.kraskiewicz}, and Haiman's mixed insertion, reviewed in Section~\ref{section.implicit}, 
to provide an isomorphism of crystals between the crystal of words $\mathcal{B}^h$ and certain sets of primed tableaux. 
Our main result of this section is stated in Theorem~\ref{theorem.main0}, which asserts that the recording tableaux under 
the mixed insertion is constant on connected components of $\mathcal{B}^h$.

\subsection{Kra\'skiewicz insertion}
\label{section.kraskiewicz}

In this section, we describe the Kra\'skiewicz insertion. To do so, we first need to define 
the \defn{Edelman--Greene insertion}~\cite{Edelmann.Greene.1987}. It is defined for a word 
$\mathbf{w} = w_1 \ldots w_\ell$ and a letter $k$ such that the concatenation $w_1 \ldots w_\ell k$ is an $A$-type 
reduced word. The Edelman--Greene insertion of a letter $k$ into an {\it increasing} word 
$\mathbf{w} = w_1 \ldots w_\ell$, denoted by $\mathbf{w} \leftsquigarrow k$, is constructed as follows:
\begin{enumerate}
	\item If $w_\ell < k$, then $\mathbf{w} \leftsquigarrow k = \mathbf{w'},$ where 
	$\mathbf{w'} = w_1 w_2 \ldots w_\ell\ k$.
	\item If $k>0$ and $k\, k+1 = w_i \, w_{i+1}$ for some $1\leqslant i < \ell$, then 
	$\mathbf{w} \leftsquigarrow k = k+1  \leftsquigarrow\mathbf{w}$. 
	\item Else let $w_i$ be the leftmost letter in $\mathbf{w}$ such that $w_i>k$. Then
	$\mathbf{w} \leftsquigarrow k = w_i \leftsquigarrow \mathbf{w'}$, where 
	$\mathbf{w'} = w_1 \ldots w_{i-1}\ k\ w_{i+1} \ldots w_\ell$.
\end{enumerate}
In the cases above, when $\mathbf{w} \leftsquigarrow k = k'  \leftsquigarrow\mathbf{w'}$,
the symbol $k'  \leftsquigarrow\mathbf{w'}$ indicates a word $\mathbf{w'}$ together with a ``bumped''
letter $k'$.

Next we consider a reduced unimodal word $\mathbf{a} = a_1 a_2 \ldots a_\ell$ with 
$a_1 > a_2 >\cdots > a_v < a_{v+1} < \cdots < a_\ell$. 
The \defn{Kra\'skiewicz row insertion} \cite{Kraskiewicz.1989,Kraskiewicz.1995} is defined for a unimodal word 
$\mathbf{a}$ and a letter $k$ such that the concatenation $a_1 a_2 \ldots a_\ell k$ is a $C$-type reduced word. 
The Kra\'skiewicz row insertion of $k$ into $\mathbf{a}$ (denoted similarly as $\mathbf{a} \leftsquigarrow k$), is
performed as follows:
\begin{enumerate}
	\item If $k=0$ and there is a subword $101$ in $\mathbf{a}$, then $\mathbf{a} \leftsquigarrow 0 = 
	0 \leftsquigarrow \mathbf{a}$.
	\item If $k \neq 0$ or there is no subword $101$ in $\mathbf{a}$, denote the decreasing part $a_1 \ldots a_v$ 
	as $\mathbf{d}$ and the increasing part $a_{v+1} \ldots a_\ell$ as $\mathbf{g}$. Perform the Edelman-Greene 
	insertion of $k$ into $\mathbf{g}$.
	\begin{enumerate}
		\item If $a_\ell < k$, then $\mathbf{g} \leftsquigarrow k = a_{v+1} \ldots a_\ell k =: \mathbf{g'}$ and
		$\mathbf{a} \leftsquigarrow k = \mathbf{d} \mathbf{g} \leftsquigarrow k = \mathbf{d\ g'} =: \mathbf{a'}$.
		\item If there is a bumped letter and $\mathbf{g} \leftsquigarrow k = k' \leftsquigarrow \mathbf{g'}$, negate all 
		the letters in $\mathbf{d}$ (call the resulting word $-\mathbf{d}$) and perform the Edelman-Greene insertion 
		$-\mathbf{d} \leftsquigarrow -k'$. Note that there will always be a bumped letter, and so 
		$-\mathbf{d} \leftsquigarrow -k' = -k'' \leftsquigarrow -\mathbf{d'}$ for some decreasing word $\mathbf{d'}$. 
		The result of the Kra\'skiewicz insertion is: $\mathbf{a} \leftsquigarrow k = \mathbf{d}[\mathbf{g} \leftsquigarrow k] 
		= \mathbf{d}[k' \leftsquigarrow \mathbf{g'}] =  - [\mathbf{-d} \leftsquigarrow -k']\ \mathbf{g'} =
		[k'' \leftsquigarrow \mathbf{d'}]\mathbf{g'} = k'' \leftsquigarrow \mathbf{a'}$, where $\mathbf{a'} := \mathbf{d'g'}$.
	\end{enumerate}
\end{enumerate}

\begin{example}
\begin{equation*}
	31012 \leftsquigarrow 0 =0 \leftsquigarrow 31012, \quad 3012 \leftsquigarrow 0 = 0 \leftsquigarrow 3102,
\end{equation*}
\begin{equation*}
	31012 \leftsquigarrow 1 = 1 \leftsquigarrow 32012, \quad 31012 \leftsquigarrow 3 = 310123.
\end{equation*}
\end{example}

The insertion is constructed to ``commute'' a unimodal word with a letter: If 
$\mathbf{a} \leftsquigarrow k = k' \leftsquigarrow \mathbf{a'}$, the two elements of the type $C$ Coxeter group 
corresponding to concatenated words $\mathbf{a}\ k$ and $k' \mathbf{a'}$ are the same.

The type $C$ Stanley symmetric functions~\eqref{equation.StanleyC} are defined in terms of unimodal factorizations.
To put the formula on a completely combinatorial footing, we need to treat the powers of $2$ by introducing signed
unimodal factorizations. A \defn{signed unimodal factorization} of $w\in W_C$ is a unimodal factorization 
$\mathbf{A}$ of $w$, in which every non-empty factor is assigned either a $+$ or $-$ sign. Denote the set of all 
signed unimodal factorizations of $w$ by $U^{\pm} (w)$.

For a signed unimodal factorization $\mathbf{A} \in U^{\pm} (w)$, define $\wt(\mathbf{A})$ to be the vector with $i$-th 
coordinate equal to the number of letters in the $i$-th factor of $\mathbf{A}$. Notice from~\eqref{equation.StanleyC} that
\begin{equation}
\label{equation.Upm}
	F^C_{w}(\mathbf{x}) = \sum_{\mathbf{A} \in U^{\pm}(w)} \mathbf{x}^{\wt(\mathbf{A})}.
\end{equation}

We will use the Kra\'skiewicz insertion to construct a map between signed unimodal factorizations of a Coxeter group 
element $w$ and pairs of certain types of tableaux $(\mathbf{P},\mathbf{T})$. We define these types of tableaux next.

A \defn{shifted diagram} $\mathcal{S}(\lambda)$ associated to a partition $\lambda$ with distinct parts is the set of 
boxes in positions $\{(i,j) \mid \ 1\leqslant i\leqslant \ell(\lambda), \ i\leqslant j\leqslant \lambda_i+i-1\}$. Here, we use 
English notation, where the box $(1,1)$ is always top-left.

Let $X^\circ_n$ be an ordered alphabet of $n$ letters $X^\circ_n = \{0< 1 < 2< \cdots < n-1\}$, and let $X'_n$ be an 
ordered alphabet of $n$ letters together with their primed counterparts as $X'_n = \{1' < 1 < 2'< 2< \cdots <n' < n\}$.

Let $\lambda$ be a partition with distinct parts.
A \defn{unimodal tableau} $\mathbf{P}$ of shape $\lambda$ on $n$ letters is a filling of $\mathcal{S}(\lambda)$ with 
letters from the alphabet $X^\circ_n$ such that the word $P_i$ obtained by reading the $i$th row from the top
of $\mathbf{P}$ from left to right, is a unimodal word, and $P_i$ is the longest unimodal subword in the concatenated 
word $P_{i+1} P_i$ \cite{Billey.2014} (cf. also with decomposition tableaux~\cite{Serrano.2010,Cho.2013}). 
The \defn{reading word} of a unimodal tableau $\mathbf{P}$ is given by $\pi_{\mathbf{P}} = P_\ell P_{\ell-1} \ldots P_1$. 
A unimodal tableau is called \textit{reduced} if $\pi_{\mathbf{P}}$ is a type $C$ reduced word corresponding to the Coxeter 
group element $w_{\mathbf{P}}$. Given a fixed Coxeter group element $w$, denote the set of reduced unimodal tableaux 
$\mathbf{P}$ of shape $\lambda$ with $w_{\mathbf{P}} = w$ as $\mathcal{UT}_w (\lambda)$.

A \defn{signed primed tableau} $\mathbf{T}$ of shape $\lambda$ on $n$ letters 
(cf. semistandard $Q$-tableau~\cite{Lam.1995}) is a filling of $\mathcal{S}(\lambda)$ 
with letters from the alphabet $X'_n$ such that:
\begin{enumerate}
	\item The entries are weakly increasing along each column and each row of $\mathbf{T}$.
	\item Each row contains at most one $i'$ for every $i = 1,\ldots,n$.
	\item Each column contains at most one $i$ for every $i = 1,\ldots,n$.
\end{enumerate}
The reason for using the word ``signed'' in the name is to distinguish the set of primed tableaux above from the 
``unsigned" version described later in the chapter.

Denote the set of signed primed tableaux of shape $\lambda$ by $\mathcal{PT^{\pm}} (\lambda)$. Given an element 
$\mathbf{T} \in \mathcal{PT^{\pm}} (\lambda)$, define the weight of the tableau $\wt(\mathbf{T})$ as the vector with
$i$-th coordinate equal to the total number of letters in $\mathbf{T}$ that are either $i$ or $i'$.

\begin{example}
$\Bigg(\young(43201,:212,::0),\ \young(11\twop\threep3,:\twop2\threep,::4)\Bigg)$ is a pair consisting of a unimodal 
tableau and a signed primed tableau both of shape $(5,3,1)$.
\end{example}

For a reduced unimodal tableau $\mathbf{P}$ with rows $P_\ell, P_{\ell-1}, \ldots, P_1$, the Kra\'skiewicz insertion of 
a letter $k$ into tableau $\mathbf{P}$ (denoted again by $\mathbf{P} \leftsquigarrow k$) is performed as follows:
\begin{enumerate}
	\item Perform Kra\'skiewicz insertion of the letter $k$ into the unimodal word $P_1$. If there is no bumped letter and 
	$P_1 \leftsquigarrow k = P'_1$, the algorithm terminates and the new tableau $\mathbf{P'}$ consists of rows 
	$P_\ell, P_{\ell-1}, \ldots, P_2, P'_1$. If there is a bumped letter and $P_1 \leftsquigarrow k  = k' \leftsquigarrow P'_1$,
	continue the algorithm by inserting $k'$ into the unimodal word $P_2$.
	\item Repeat the previous step for the rows of $\mathbf{P}$ until either the algorithm terminates, in which case the new 
	tableau $\mathbf{P}'$ consists of rows $P_\ell, \ldots, P_{s+1}, P'_s, \ldots, P'_1$, or, 
        the insertion continues until we bump a letter $k_e$ from $P_\ell$, in which case we then put $k_e$ on a new row of 
	the shifted shape of $\mathbf{P'}$, so that the resulting tableau $\mathbf{P'}$ consists of rows 
	$k_e, P'_\ell, \ldots, P'_1$. 
\end{enumerate}

\begin{example}
$$\young(43201,:212,::0) \leftsquigarrow 0 = \young(43210,:210,::01),$$ 
since the insertions row by row are given by $43201 \leftsquigarrow 0 =0 \leftsquigarrow 43210$,
$212 \leftsquigarrow 0 =  1 \leftsquigarrow 210$, and $0 \leftsquigarrow 1 = 01$.
\end{example}

\begin{lemma} \cite{Kraskiewicz.1989}
Let $\mathbf{P}$ be a reduced unimodal tableau with reading word $\pi_\mathbf{P}$ for an element $w\in W_C$.
Let $k$ be a letter such that $\pi_\mathbf{P}k$ is a reduced word. Then the tableau 
$\mathbf{P'} = \mathbf{P} \leftsquigarrow k$ is a reduced unimodal tableau, for which the reading word $\pi_{\mathbf{P'}}$ 
is a reduced word for $w s_k$.
\end{lemma}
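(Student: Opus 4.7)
The plan is to iterate, row by row, the single-row ``commuting'' property that is already built into the Kra\'skiewicz row insertion. Specifically, I would first isolate and prove the following single-row statement: if $\mathbf{a}$ is a unimodal word and $\mathbf{a}\,k$ is a type $C$ reduced word, and if $\mathbf{a} \leftsquigarrow k = k' \leftsquigarrow \mathbf{a'}$ (or $\mathbf{a} \leftsquigarrow k = \mathbf{a'}$ in the non-bumping case), then $\mathbf{a'}$ is unimodal and $k'\mathbf{a'}$ (respectively $\mathbf{a'}$) represents the same Coxeter element as $\mathbf{a}\,k$, with the same word length. This is done by a direct case check against the definition of $\leftsquigarrow$: the ``append'' case (1.a) is trivial; the $101$-subword exception uses the long braid $s_0 s_1 s_0 s_1 = s_1 s_0 s_1 s_0$; and the generic bumping case (1.b) reduces to correctness of the Edelman--Greene row insertion applied to the increasing part $\mathbf{g}$, together with the observation that the negation trick on the decreasing part $\mathbf{d}$ encodes the way $s_i s_j = s_j s_i$ for $|i-j|>1$ and the short braid relation propagate a bumped letter leftward.

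Next, I would apply this single-row statement successively to the rows $P_1, P_2, \ldots$ of $\mathbf{P}$. At each stage, having replaced $P_1, \ldots, P_{i-1}$ by $P'_1, \ldots, P'_{i-1}$ and carrying a bumped letter $k'_{i-1}$, the hypothesis for the single-row step at row $P_i$ is satisfied because the full reading word, read after the partial insertion, always represents $w s_k$ and has word length $\ell(w)+1$; hence $P_i\,k'_{i-1}$ is itself reduced as a factor. When the algorithm halts — either by a row $P_s$ absorbing the bumped letter without further bumping, or by depositing a letter $k_e$ on a new row of the shifted shape — the accumulated identities give
\[
\pi_{\mathbf{P'}} = w s_k \quad \text{with} \quad \ell(\pi_{\mathbf{P'}}) = \ell(w) + 1,
\]
so $\pi_{\mathbf{P'}}$ is a reduced word for $w s_k$.

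The real work lies in showing that $\mathbf{P'}$ is a \emph{unimodal tableau}, not merely a shifted filling whose rows are unimodal. Unimodality of each individual $P'_i$ falls out of the single-row step, but the defining condition that $P'_i$ be the \emph{longest} unimodal subword of the concatenation $P'_{i+1}P'_i$ is more delicate. I would prove this invariant by induction on the insertion row, tracking two things at each bumping step: which letter $k'_i$ exits the current row, and how it interacts with the pivot of $P_{i+1}$ (the point separating its decreasing and increasing parts). The analysis splits along the same cases (1), (1.a), (1.b) used in the single-row step, with the $101$-configuration requiring separate treatment because it is the one place where the standard Edelman--Greene picture is altered.

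I expect this last ``longest unimodal subword'' invariant to be the main obstacle. The difficulty is that a letter bumped by the Edelman--Greene step from the increasing part of $P_i$ may, after the negation trick, land in the decreasing part of $P_i$ and then enter $P_{i+1}$ at a position that could in principle shift the pivot of $P_{i+1}$; one must verify that no such unwanted ``lengthening'' of a unimodal subword of $P'_{i+2} P'_{i+1}$ occurs. My plan is to handle this by recording, at the moment each $k'_i$ is produced, an explicit inequality between $k'_i$ and the first letter of $P_{i+1}$ (coming from the bumping rules), and then showing that this inequality is exactly what forces $P'_{i+1}$ to remain the longest unimodal subword of $P'_{i+2} P'_{i+1}$. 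This step is essentially the tableau-preservation lemma underlying Kra\'skiewicz's original correspondence, and I would either reproduce it or cite it.
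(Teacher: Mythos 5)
This lemma is imported by the paper directly from Kra\'skiewicz's work (and Lam's thesis); the paper offers no proof of its own, so there is no internal argument to compare yours against. Judged on its own terms, your outline follows the standard route, and the Coxeter-theoretic half is sound: the single-row commutation $\mathbf{a}\leftsquigarrow k = k'\leftsquigarrow\mathbf{a}'$ (with the $101$ case handled by the long braid relation and the generic case by Edelman--Greene on $\mathbf{g}$ plus the negation trick on $\mathbf{d}$) preserves both the group element and the letter count, iterating it over rows preserves the element represented by $\pi_{\mathbf{P}}k$, and your observation that $P_i\,k'_{i-1}$ is reduced because it is a consecutive factor of a reduced word is correct. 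This correctly yields that $\pi_{\mathbf{P}'}$ is a reduced word for $ws_k$.

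The gap is that the other half of the conclusion --- that $\mathbf{P}'$ is a \emph{unimodal tableau} --- is exactly the part you defer. Your proposal acknowledges that the ``longest unimodal subword of $P'_{i+1}P'_i$'' invariant is the crux, but the mechanism you offer (a single inequality between the bumped letter $k'_i$ and the first letter of $P_{i+1}$) is asserted to suffice without justification, and it is not by itself enough: one must also control where the new letter lands relative to the pivot of $P_{i+1}$, rule out a longer unimodal subword straddling the two rows after \emph{both} rows have changed, and verify that the resulting shape is still a strict partition (the new box added to row $s$, or the new one-box row $k_e$, must not violate $\lambda_s<\lambda_{s-1}$); none of these is addressed. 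Moreover, your stated fallback --- ``reproduce it or cite it,'' where ``it'' is the tableau-preservation lemma underlying Kra\'skiewicz's correspondence --- is circular, since that lemma \emph{is} the statement to be proved. As it stands the proposal is a correct plan for the reduced-word assertion together with an unexecuted placeholder for the tableau assertion, which is where essentially all of the work in Kra\'skiewicz's original argument lies.
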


\begin{lemma} \cite[Lemma 3.17]{Lam.1995}
 \label{lemma.ins}
 Let $\mathbf{P}$ be a unimodal tableau, and $\mathbf{a}$ a unimodal word such that $\pi_{\mathbf{P}}\mathbf{a}$ is reduced. 
Let $(x_1,y_1), \ldots, (x_r, y_r)$ be the (ordered) list of boxes added when  $\mathbf{P} \leftsquigarrow {\mathbf{a}}$ 
is computed. Then there exists an index $v$, such that $x_1 < \cdots < x_v \geqslant \cdots \geqslant x_r $ and 
$y_1 \geqslant \cdots \geqslant y_v < \cdots < y_r$.
\end{lemma}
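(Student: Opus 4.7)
The plan is to argue by induction on $\ell = |\mathbf{a}|$, where the base case $\ell = 1$ is trivial (there is only one added box). For the inductive step, the essential content reduces to a pair of two-letter comparison statements about consecutive insertions into any reduced unimodal tableau $\mathbf{Q}$: (A) if $a > b$ and $\pi_{\mathbf{Q}} a b$ is reduced, then the box added when inserting $b$ into $\mathbf{Q} \leftsquigarrow a$ lies strictly below and weakly to the left of the box added when inserting $a$; and (B) if $a < b$, then the box added by $b$ lies weakly above and strictly to the right of the box added by $a$. Taking these for granted, splitting the unimodal word $\mathbf{a}$ at its minimum $a_v$ into its strictly decreasing prefix $a_1 > \cdots > a_v$ and its strictly increasing suffix $a_v < a_{v+1} < \cdots < a_\ell$ and chaining the inequalities from (A) over the prefix and (B) over the suffix immediately yields the required bounds on $(x_1, y_1), \ldots, (x_r, y_r)$ around the pivot index $v$.

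To establish (A) and (B), I would perform a second induction on the number of rows of $\mathbf{Q}$ that the two bumping paths pass through, comparing row by row. For a single row, one inspects the cases of the Kra\'skiewicz row insertion: the appending case (1a), the bumping case (1b), and the $101$-commutation case (1). In each branch, one checks directly that two successive letters $a, b$ with the appropriate ordering either terminate the insertion at consistently ordered positions, or yield bumped letters $a', b'$ that again satisfy the hypothesis of (A) or (B) one row deeper, with the already-placed box on the current row lying in the correct relative position to all subsequent boxes. The inductive step then transports the comparison to the next row via the Edelman--Greene step on the increasing part and, in the bumped case, the negated Edelman--Greene step on the decreasing part.

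The main obstacle is managing the interaction between the three auxiliary operations hidden inside a single Kra\'skiewicz row insertion: the Edelman--Greene bumping on the increasing part $\mathbf{g}$, the negation followed by an Edelman--Greene bumping on the decreasing part $\mathbf{d}$, and the special $101$ commutation involving the letter $0$. The first two operations flip inequalities between the inserted letter and the eventually bumped letter, so the bookkeeping of ``strictly'' versus ``weakly'' has to be done carefully in each case; the $101$ case must be handled separately because it can delay the effective insertion by one letter and therefore potentially shift the parity of which successive boxes are strictly versus weakly ordered. Once these cases are dispatched, the two-letter comparison lemmas follow, and the overall statement of Lemma~\ref{lemma.ins} is then a routine consequence of unimodality as outlined above.
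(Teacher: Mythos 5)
The paper does not actually prove this lemma; it is imported verbatim from Lam's thesis (Lemma~3.17 of \cite{Lam.1995}), so there is no in-paper argument to compare against. Your reduction of the statement to the two pairwise comparison claims (A) and (B) is correct and is the natural skeleton for a result of this type (it is the analogue of the row bumping lemma for RSK and Edelman--Greene insertion): chaining (A) along the strictly decreasing prefix of $\mathbf{a}$ and (B) along the strictly increasing suffix, with the pivot $v$ of the box sequence sitting at the position of the minimum of $\mathbf{a}$, does deliver exactly the inequalities $x_1 < \cdots < x_v \geqslant \cdots \geqslant x_r$ and $y_1 \geqslant \cdots \geqslant y_v < \cdots < y_r$.

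The gap is that (A) and (B) are where all of the content of the lemma lives, and your proposal asserts rather than proves them. ``One inspects the cases\dots and checks directly'' describes the entire proof without performing any part of it: nothing in the write-up verifies, for even one branch, that the positions touched in a given row are ordered as claimed, that the letters passed to the next row again satisfy the hypotheses of (A) or (B) --- note that the bumped letter is produced by a two-stage process, an Edelman--Greene bump out of the increasing part $\mathbf{g}$ followed by a negated Edelman--Greene bump out of $\mathbf{d}$, so the order relation between consecutive bumped letters is genuinely not immediate --- or that the $101$-commutation rule, which leaves the row unchanged and passes the $0$ straight down, preserves the comparison rather than breaking the strict/weak pattern. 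You have correctly identified these as the delicate points, but identifying them is not resolving them; moreover, the row-by-row induction almost certainly needs a strengthened hypothesis tracking the relative position of the two entire bumping paths (not just the final boxes), which you gesture at only parenthetically. As written, the argument establishes only that the lemma follows from (A) and (B), which is the easy half.
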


Let $\mathbf{A} \in U^{\pm} (w)$ be a signed unimodal factorization with unimodal factors 
$\mathbf{a}_1, \mathbf{a}_2, \ldots, \mathbf{a}_n$. We recursively construct a sequence 
$(\emptyset, \emptyset) = (\mathbf{P}_0, \mathbf{T}_0),\ (\mathbf{P}_1, \mathbf{T}_1), \ldots, (\mathbf{P}_n, \mathbf{T}_n) 
= (\mathbf{P}, \mathbf{T})$ of tableaux, where 
$\mathbf{P}_s \in \mathcal{UT}_{(\mathbf{a}_1 \mathbf{a}_2 \ldots \mathbf{a}_s)} (\lambda^{(s)})$ and $\mathbf{T}_s 
\in \mathcal{PT}^{\pm} (\lambda^{(s)})$ are tableaux of the same shifted shape $\lambda^{(s)}$.

To obtain the \defn{insertion tableau} $\mathbf{P}_s$, insert the letters of $\mathbf{a}_s$ one by one from left to right, into 
$\mathbf{P}_{s-1}$. Denote the shifted shape of $\mathbf{P}_{s}$ by $\lambda^{(s)}$. Enumerate the boxes in 
the skew shape $\lambda^{(s)} / \lambda^{(s-1)}$ in the order they appear in $\mathbf{P}_s$. Let these boxes be 
$(x_1,y_1), \ldots, (x_{\ell_s}, y_{\ell_s})$.

Let $v$ be the index that is guaranteed to exist by Lemma~\ref{lemma.ins} when we compute 
$\mathbf{P_{s-1}} \leftsquigarrow {\mathbf{a_s}}$. The \defn{recording tableau} $\mathbf{T}_{s}$ is a primed 
tableau obtained from $\mathbf{T}_{s-1}$ by adding the boxes $(x_1, y_1), \ldots, (x_{v-1}, y_{v-1})$, each filled with 
the letter $s'$, and the boxes $(x_{v+1},y_{v+1}), \ldots, (x_{\ell_s},  y_{\ell_s})$, each filled with the letter $s$. 
The special case is the box $(x_v,y_v)$, which could contain either $s'$ or $s$. 
The letter is determined by the sign of the factor $\mathbf{a}_s$: If the sign is $-$, the box is filled with the letter $s'$, and if 
the sign is $+$, the box is filled with the letter $s$.
We call the resulting map the \defn{primed Kra\'skiewicz map} $\mathrm{KR}'$.

\begin{example}
Given a signed unimodal factorization $\mathbf{A} = (-0) (+212) (-43201)$, the sequence of tableaux is
$$ (\emptyset,\emptyset), \quad (\ \young(0),\young(\onep)\ ), \quad \Big( \ \young(212,:0), \young(\onep\twop2,:2)\ \Big), 
\quad \Bigg(\ \young(43201,:212,::0),\young(\onep\twop2\threep3,:2\threep3,::\threep)\ \Bigg). $$
\end{example}

If the recording tableau is constructed, instead, by simply labeling its boxes with $1,2,3,\ldots$ in the order these boxes 
appear in the insertion tableau, we recover the original Kra\'skiewicz map  \cite{Kraskiewicz.1989,Kraskiewicz.1995}, 
which is a bijection
\begin{equation*}
	\mathrm{KR}\colon R(w) \rightarrow 
	\bigcup_{\lambda} \big[\mathcal{UT}_w (\lambda) \times \mathcal{ST} (\lambda)\big],
\end{equation*}
 where $\mathcal{ST}(\lambda)$ is the set of \defn{standard shifted tableau} of shape $\lambda$, i.e., the set of fillings of $\mathcal{S} (\lambda)$ with letters
$1,2, \ldots,|\lambda|$ such that each letter appears exactly once, each row filling is increasing, and each 
column filling is increasing. 

\begin{theorem}
\label{theorem.KR}
The primed Kra\'skiewicz map is a bijection
\begin{equation*}
	\mathrm{KR}'\colon U^{\pm}(w) \rightarrow 
	\bigcup_{\lambda} \big[\mathcal{UT}_w (\lambda) \times \mathcal{PT}^{\pm} (\lambda)\big].
\end{equation*}
\end{theorem}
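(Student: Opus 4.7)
I would prove Theorem~\ref{theorem.KR} by regarding $\mathrm{KR}'$ as a primed refinement of the unsigned Kra\'skiewicz bijection $\mathrm{KR}$ and establishing well-definedness, followed by constructing an explicit inverse.

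For well-definedness, iterated application of the lemma stated immediately before Lemma~\ref{lemma.ins} shows that $\mathbf{P}_n$ is a reduced unimodal tableau with $w_{\mathbf{P}_n}=w$, so $\mathbf{P}_n\in\mathcal{UT}_w(\lambda)$. For the recording tableau $\mathbf{T}_n$, I would verify the three conditions defining a signed primed tableau using Lemma~\ref{lemma.ins}: within the strip of boxes added at step $s$, the pivot index $v$ splits the strip into an initial segment with strictly increasing rows (labeled $s'$, giving at most one $s'$ per row) and a final segment with strictly increasing columns (labeled $s$, giving at most one $s$ per column); the pivot box inherits the sign of $\mathbf{a}_s$ but still fits one of the two patterns. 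Weak monotonicity along rows and columns within a single strip follows from the ``V''-shape inequalities of Lemma~\ref{lemma.ins} combined with the order $1'<1<2'<2<\cdots$ on $X'_n$, and across different steps it is automatic since the new strip is attached to the outer boundary of the current shifted shape and uses strictly larger letters than any added before.

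The inverse is defined recursively. Given $(\mathbf{P},\mathbf{T})$ of shape $\lambda$, for $s=n,n-1,\ldots,1$ I identify the skew strip of boxes in $\mathbf{T}$ labeled $s'$ or $s$, order them so that the $s'$-boxes come first sorted by increasing row and the $s$-boxes come last sorted by increasing column, and then reverse-bump them out of the current insertion tableau in the reverse of this order, using the inverse of the Kra\'skiewicz row insertion; this produces a candidate factor $\mathbf{a}_s$, with sign $-$ if the unique pivot box (where the labels switch from $s'$ to $s$) carries $s'$ and $+$ otherwise. Two things must then be checked: that the concatenation $\mathbf{a}_1\cdots\mathbf{a}_n$ is a reduced word for $w$, and that each $\mathbf{a}_s$ is unimodal. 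The first is immediate from iterating the lemma preceding Lemma~\ref{lemma.ins}. The second is the main obstacle, and I would prove it by induction on the strip length $r$: the reverse bumps from the $s'$-prefix each exit from strictly higher rows of $\mathbf{P}$, contributing the strictly decreasing portion of $\mathbf{a}_s$, while the reverse bumps from the $s$-suffix each exit from a single row at positions of strictly increasing columns, contributing the strictly increasing portion, so that $\mathbf{a}_s$ has precisely the required unimodal shape. A useful sanity check is that composing $\mathrm{KR}'$ with the standardization of $\mathbf{T}$ (labeling its entries $1,\ldots,|\lambda|$ in the order they would be added by $\mathrm{KR}'$) recovers the classical bijection $\mathrm{KR}$, so bijectivity of $\mathrm{KR}'$ reduces to that of $\mathrm{KR}$ together with the straightforward destandardization that assigns each nonempty factor its pivot box and sign.
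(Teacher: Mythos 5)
Your proposal follows essentially the same route as the paper's proof: well-definedness of the recording tableau via the V-shape inequalities of Lemma~\ref{lemma.ins}, and invertibility by standardizing $\mathbf{T}$ to a standard shifted tableau (your ordering of each strip --- $s'$-boxes by increasing row, then $s$-boxes by increasing column --- is exactly the paper's refined total order $\mathbf{Q}^*$) and reducing to the classical bijection $\mathrm{KR}$, with unimodality of the recovered factors read off from the geometry of the added boxes. The only difference is cosmetic: where you sketch a direct reverse-bumping argument for unimodality of each $\mathbf{a}_s$, the paper simply notes that the boxes of that factor satisfy the box pattern of Lemma~\ref{lemma.ins} and invokes the lemma.
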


\begin{proof}
First we show that the map is well-defined:  Let $\mathbf{A} \in U^{\pm}(w)$ such that 
$\mathrm{KR}'(A) = (\mathbf{P}, \mathbf{Q})$.  The fact that  $\mathbf{P}$ is a unimodal tableau follows from the fact 
that $\mathrm{KR}$ is well-defined. On the other hand, $\mathbf{Q}$ satisfies Condition (1) in the definition of signed 
primed tableaux since its entries are weakly increasing with respect to the order the associated boxes are added to 
$\mathbf{P}$. Now fix an $s$ and consider the insertion $\mathbf{P_{s-1}} \leftsquigarrow {\mathbf{a_s}}$. Refer to the 
set-up in Lemma~\ref{lemma.ins}. Then, $y_1<\cdots<y_v$ implies there is at most one $s'$ in each row and 
$y_v\geqslant \cdots \geqslant y_{\ell_s}$ implies there is at most one $s$ in each column, so Conditions (2) and (3) of 
the definition have been verified, implying that indeed $\mathbf{Q}$ is a signed primed tableau.

Now suppose $(\mathbf{P},\mathbf{Q}) \in \bigcup_{\lambda} \big[\mathcal{UT}_w (\lambda) \times 
\mathcal{PT}^{\pm} (\lambda)\big]$.  The ordering of the alphabet $X'$ induces a partial order on the set of boxes of 
$\mathbf{Q}$.  Refine this ordering as follows:  Among boxes containing an $s'$, box $b$ is greater than box $c$ if box 
$b$ lies below box $c$.  Among boxes containing an $s$, box $b$ is greater than box $c$ if box $b$ lies to the right of 
box $c$.  Let the standard shifted tableau induced by the resulting total order be denoted $\mathbf{Q}^*$.  

Let $w=\mathrm{KR}^{-1}(\mathbf{P},\mathbf{Q}^*)$. Divide $w$ into factors, where the size of the $s$-th factor is equal 
to the $s$-th entry in $\wt(\mathbf{Q})$. Let $\textbf{A} =\textbf{a}_1 \ldots \textbf{a}_n$ be the 
resulting factorization, where the sign of $\mathbf{a}_s$ is determined as follows: Consider the lowest leftmost box in 
$\mathbf{Q}$ that contains an $s$ or $s'$ (such a box must exist if $\textbf{a}_s \neq \emptyset$).  If this box contains 
an $s$ give $\mathbf{a}_s$ a positive sign, and otherwise a negative sign. Let $b_1,\ldots, b_{|\textbf{a}_s|}$ denote the 
boxes of $\mathbf{Q}^*$ corresponding to $\textbf{a}_s$ under $\mathrm{KR}^{-1}$.  The construction of $\mathbf{Q}^*$ and 
the fact that $\mathbf{Q}$ is a primed shifted tableau imply that the coordinates of these boxes satisfy the hypothesis 
of Lemma \ref{lemma.ins}.  Since these are exactly the boxes that appear when we compute 
$\mathbf{P_{s-1}} \leftsquigarrow \mathbf{a}_s$, Lemma \ref{lemma.ins} implies that $\mathbf{a}_s$ is unimodal. 
It follows that $\mathbf{A}$ is a signed unimodal factorization mapping to $(\mathbf{P},\mathbf{Q})$ under $\mathrm{KR}'$.
It is not hard to see $\mathbf{A}$ is unique. 
\end{proof}

Theorem~\ref{theorem.KR} and Equation~\eqref{equation.Upm} imply the following relation:
\begin{equation}
\label{equation.PTpm}
	F^C_{w}(\mathbf{x}) = \sum_{\lambda} \big|\mathcal{UT}_w (\lambda) \big| \sum_{\mathbf{T} \in 
	\mathcal{PT}^{\pm}(\lambda)} \mathbf{x}^{\wt(\mathbf{T})}.
\end{equation}

\begin{remark}
The sum $\sum_{\mathbf{T} \in \mathcal{PT}^{\pm}(\lambda)} \mathbf{x}^{\wt(\mathbf{T})}$ is also known as the
$Q$-Schur function. The expansion~\eqref{equation.PTpm}, with a slightly different interpretation of $Q$-Schur function, 
was shown in~\cite{Billey.Haiman.1995}.
\end{remark}

At this point, we are halfway there to expand $F^C_{w}(\mathbf{x})$ in terms of Schur functions.
In the next section we introduce a crystal structure on the set $\mathcal{PT} (\lambda)$ of unsigned primed tableaux.

\subsection{Mixed insertion}
\label{section.implicit}

Set $\mathcal{B}^h = \mathcal{B}^h_{\infty}$. Similar to the well-known RSK-algorithm, mixed insertion~\cite{Haiman.1989} 
gives a bijection between $\mathcal{B}^h$ and the set of pairs of tableaux $(\mathbf{T}, \mathbf{Q})$, but in this case 
$\mathbf{T}$ is an (unsigned) primed tableau of shape $\lambda$ and $\mathbf{Q}$ is a standard shifted tableau of 
the same shape. 

An \defn{(unsigned) primed tableau} of shape $\lambda$ (cf. semistandard $P$-tableau~\cite{Lam.1995} or
semistandard marked shifted tableau~\cite{Cho.2013}) is a signed primed tableau $\mathbf{T}$ of shape $\lambda$ with 
only unprimed elements on the main diagonal. Denote the set of primed tableaux of shape $\lambda$ by 
$\mathcal{PT}(\lambda)$. The weight function  $\wt(\mathbf{T})$ of $\mathbf{T} \in \mathcal{PT}(\lambda)$ is inherited 
from the weight function of signed primed tableaux, that is, it is the vector with $i$-th coordinate equal to the number of 
letters $i'$ and $i$ in $\mathbf{T}$. We can simplify~\eqref{equation.PTpm} as
\begin{equation}
\label{equation.PT}
	F^C_{w}(\mathbf{x}) = \sum_{\lambda} 2^{\ell(\lambda)} \big|\mathcal{UT}_w (\lambda) \big|  
	\sum_{\mathbf{T} \in \mathcal{PT}(\lambda)} \mathbf{x}^{\wt(\mathbf{T})}.
\end{equation}

\begin{remark} The sum $\sum_{\mathbf{T} \in \mathcal{PT}(\lambda)} \mathbf{x}^{\wt(\mathbf{T})}$ is also known as a 
$P$-Schur function.
\end{remark}

Given a word $b_1 b_2 \ldots b_h$ in the alphabet $X = \{1<2<3<\cdots\}$, we recursively construct a sequence of tableaux 
$(\emptyset, \emptyset) = (\mathbf{T}_0, \mathbf{Q}_0),$ $(\mathbf{T}_1, \mathbf{Q}_1), \ldots, (\mathbf{T}_h, \mathbf{Q}_h) 
= (\mathbf{T}, \mathbf{Q})$, where $\mathbf{T}_s \in \mathcal{PT}(\lambda^{(s)})$ and $\mathbf{Q}_s \in 
\mathcal{ST}(\lambda^{(s)})$. 
To obtain the tableau $\mathbf{T}_{s}$, insert the letter $b_s$ into $\mathbf{T}_{s-1}$ as follows. 
First, insert $b_s$ into the first row of $\mathbf{T}_{s-1}$, bumping out the leftmost element $y$ that is strictly greater 
than $b_i$ in the alphabet $X' = \{1' < 1 < 2' < 2< \cdots \}$.
\begin{enumerate}
	\item If $y$ is not on the main diagonal and $y$ is not primed, then insert it into the next row, bumping out the leftmost 
    	element that is strictly greater than $y$ from that row.
	\item If $y$ is not on the main diagonal and $y$ is primed, then insert it into the next column to the right, bumping out 
	the topmost element that is strictly greater than $y$ from that column.
	\item If $y$ is on the main diagonal, then it must be unprimed. Prime $y$ and insert it into the column on the right, 
	bumping out the topmost element that is strictly greater than $y$ from that column.
\end{enumerate}
If a bumped element exists, treat it as a new $y$ and repeat the steps above -- if the new $y$ is unprimed, row-insert 
it into the row below its original cell, and if the new $y$ is primed, column-insert it into the column to the right of its original cell.

The insertion process terminates either by placing a letter at the end of a row, bumping no new element, or forming a 
new row with the last bumped element.

\begin{example}
Under mixed insertion, 
$$\young(22\threep3,:33) \leftarrow 1 = \young(1\twop\threep3,:2\threep,::3).$$
Let us explain each step in detail. The letter $1$ is inserted into the first row bumping out the $2$ from the main diagonal, 
making it a $2'$, which is then inserted into the second column. The letter $2'$ bumps out $2$, which we insert into the 
second row. Then $3$ from the main diagonal is bumped from the second row, making it a $3'$, which is then inserted 
into third column. The letter $3'$ bumps out the 3 on the second row, which is then inserted as the first element in the 
third row.
\end{example}

The shapes of $\mathbf{T}_{s-1}$ and $\mathbf{T}_s$ differ by one box. Add that box to $\mathbf{Q}_{s-1}$ with a letter 
$s$ in it, to obtain the standard shifted tableau $\mathbf{Q}_s$.

\begin{example}
For a word $332332123$, some of the tableaux in the sequence $(\mathbf{T}_i, \mathbf{Q}_i)$ are
$$\Big(\ \young(2\threep,:3),\young(12,:3) \ \Big), \quad \Big(\ \young(22\threep3,:33),\young(1245,:36) \ \Big), 
\quad \Bigg(\ \young(1\twop2\threep3,:2\threep3,::3),\young(12459,:368,::7) \ \Bigg).$$
\end{example}

\begin{theorem} \cite{Haiman.1989}
The construction above gives a bijection
\begin{equation*}
	\mathrm{HM} \colon \mathcal{B}^h \rightarrow \bigcup_{\lambda\vdash h} \big[ \mathcal{PT}(\lambda) \times 
	\mathcal{ST}(\lambda) \big].
\end{equation*}
\end{theorem}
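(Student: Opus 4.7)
The plan is to prove that $\mathrm{HM}$ is a bijection by constructing an explicit inverse, so I will need to verify both that the forward insertion always produces a valid pair $(\mathbf{T},\mathbf{Q}) \in \mathcal{PT}(\lambda) \times \mathcal{ST}(\lambda)$, and that a ``reverse bumping'' procedure recovers the original word from any such pair. The strategy mirrors the classical RSK proof: show well-definedness by induction on the length $h$, define reverse bumping as a local inverse of one insertion step, and then iterate.

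For well-definedness of the forward map, I would argue by induction on $s$ that $\mathbf{T}_s$ is a primed tableau and $\mathbf{Q}_s$ is a standard shifted tableau. The inductive step requires tracking a single insertion $\mathbf{T}_{s-1} \leftarrow b_s$ and checking that each of the defining conditions of $\mathcal{PT}(\lambda^{(s)})$ is preserved. Concretely, I would verify the following local invariants after each bumping step: (i) the shape remains shifted; (ii) row insertion preserves weak increase along rows, column insertion preserves weak increase along columns, and the switching rule (primed $\to$ column, unprimed off-diagonal $\to$ row) preserves the ``at most one $i'$ per row'' and ``at most one $i$ per column'' restrictions; and (iii) the diagonal rule, in which an unprimed diagonal entry $y$ becomes $y'$ before being column-inserted, ensures that no primed letter ever lands on the main diagonal. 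The last point is the key reason the mixed insertion produces \emph{unsigned} primed tableaux. For $\mathbf{Q}_s$, one notes that the new outer corner added to $\lambda^{(s-1)}$ to form $\lambda^{(s)}$ is strictly larger in reading order than all previously added boxes, so labeling it $s$ preserves the standard shifted property.

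For invertibility, I would describe the reverse insertion: given $(\mathbf{T}_s, \mathbf{Q}_s)$, locate the box labeled $s$ in $\mathbf{Q}_s$, extract its entry $y$ from $\mathbf{T}_s$, and reverse-bump. If $y$ is unprimed and not in the first row, row-reverse-bump into the row above by finding the rightmost entry strictly less than $y$ in the required alphabet; if $y$ is primed and not in the first column of its row, column-reverse-bump analogously into the column to the left; and if the reverse bump lands on the main diagonal with $y$ primed, unprime it and switch to row-reverse-bumping. One then proves by induction on the number of bumps that reverse insertion is a two-sided inverse of a single forward insertion by checking that, at each step, the element chosen to be reverse-bumped is exactly the element that was displaced during the corresponding forward step; this is where the strictness versus weakness distinction between bumping with $X$-comparison and $X'$-comparison matters. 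Iterating over $s = h, h-1, \ldots, 1$ then recovers $b_s$ as the element ejected from the first row (or first column) at step $s$, producing a word in $\mathcal{B}^h$ whose image under $\mathrm{HM}$ is the pair we started with.

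The main obstacle I expect is the case analysis around the diagonal: one must show that forward insertion and reverse insertion agree on which of the three bumping subrules (row, column, or diagonal prime/unprime) was used, purely from the data $(\mathbf{T}_s, \mathbf{Q}_s)$ and without remembering the history. The diagonal transition is the only place where the character of the bumping path (row vs.\ column) changes, and since the priming/unpriming alters the letter itself, the reverse algorithm must detect this by testing whether a box on the diagonal contains an unprimed letter that was produced by unpriming a column path, versus one that simply sits at the end of a row path. Verifying that the local configuration after the forward step determines this unambiguously is the real content of Haiman's argument, and is what ultimately makes $\mathrm{HM}$ a bijection rather than merely a surjection.
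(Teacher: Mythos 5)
The paper does not prove this statement at all: it is quoted from Haiman's 1989 paper, so there is no ``paper's own proof'' to compare against beyond the citation. Your outline follows the standard RSK-style strategy (well-definedness by induction on insertions, plus an explicit reverse-bumping inverse), which is indeed how Haiman establishes the bijection, so the approach is the right one.

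That said, what you have written is a plan rather than a proof, and the gap sits exactly where you locate it yourself. The two decisive claims --- (a) that each forward insertion step preserves the defining conditions of a primed tableau (weak increase, at most one $i'$ per row, at most one $i$ per column, no primes on the diagonal), and (b) that the reverse-bumping step recovers precisely the element that did the bumping in the forward step --- are announced as things to be checked but never actually checked. For (b) the verification is not as mysterious as your last paragraph suggests: the reverse procedure does not need to reconstruct the history of the path from the final tableau globally, because at each reverse step the decision between row-reverse-bumping and column-reverse-bumping is forced by whether the current element is primed (primed letters only ever arise from column insertions or from the diagonal priming rule), and the diagonal transition is detected locally by whether the reverse column bump lands on the main diagonal, whose entries are always unprimed. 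What does require a careful check, and is absent, is the order-theoretic matching: forward bumping displaces the \emph{leftmost} entry \emph{strictly greater} in $X'$, so reverse bumping must select the \emph{rightmost} entry \emph{strictly less} in $X'$, and one must verify this identifies the correct cell even when a row or column contains both $k'$ and $k$. (Your remark about a distinction between $X$-comparisons and $X'$-comparisons is a red herring here: all comparisons in the insertion rules as stated are strict comparisons in $X'$.) Since the result is cited rather than reproved in the paper, none of this is a defect relative to the paper, but as a standalone proof your text establishes only the architecture of the argument, not the argument itself.
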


The bijection $\mathrm{HM}$ is called a \defn{mixed insertion}. If $\mathrm{HM}(\mathbf{b}) = (\mathbf{T},\mathbf{Q})$, 
denote $P_{\mathrm{HM}} (\mathbf{b}) = \mathbf{T}$ and $R_{\mathrm{HM}}(\mathbf{b}) = \mathbf{Q}$.

Just as for the RSK-algorithm, the mixed insertion has the property of preserving the recording tableau within each 
connected component of the crystal $\mathcal{B}^h$.

\begin{theorem}
\label{theorem.main0}
The recording tableau $R_{\mathrm{HM}} (\cdot)$ is constant on each connected component of the crystal 
$\mathcal{B}^h$.
\end{theorem}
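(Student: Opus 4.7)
The plan is to prove the stronger statement that $R_{\mathrm{HM}}(f_i \mathbf{b}) = R_{\mathrm{HM}}(\mathbf{b})$ for every $\mathbf{b} \in \mathcal{B}^h$ and every $i$ with $f_i \mathbf{b} \neq \mathbf{0}$ (the analogous statement for $e_i$ then follows from the bijectivity of $\mathrm{HM}$). Since any two words in the same connected component are linked by a sequence of crystal operators, this implies the theorem. Recall that $R_{\mathrm{HM}}(\mathbf{b})$ is completely determined by the shape sequence $\emptyset = \lambda^{(0)} \subset \lambda^{(1)} \subset \cdots \subset \lambda^{(h)}$ produced by mixed insertion, so the target is to show that this sequence is invariant under $f_i$.

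The description of $f_i$ via bracketing provides a convenient handle: the word $f_i \mathbf{b}$ agrees with $\mathbf{b}$ in every position except one position $p$, at which an $i$ is changed to $i+1$; moreover, $p$ is the position of the leftmost unbracketed $i$ in the $\{i,i+1\}$-subword of $\mathbf{b}$, so in the suffix $b_{p+1}\cdots b_h$ every occurrence of $i$ is preceded by a matched $i+1$. Since the prefixes $b_1\cdots b_{p-1}$ of $\mathbf{b}$ and $f_i \mathbf{b}$ coincide, the shapes $\lambda^{(s)}$ for $s<p$ already agree. The plan is then to proceed by induction on the suffix length $h-p$. The base case $p = h$ reduces to the following local claim: given the primed tableau $\mathbf{T} = \mathbf{T}_{h-1}$, mixed inserting $i$ into $\mathbf{T}$ and mixed inserting $i+1$ into $\mathbf{T}$ create a new box in the same cell. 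This is a finite case analysis on the first row of $\mathbf{T}$ (and on the subsequent rows/columns visited by the bumping chain), where the unbracketedness condition at $p$ is used to exclude configurations that would produce distinct landing cells. For the inductive step, one commutes the insertion of positions $p$ and $p+1$ in a way compatible with $f_i$: one shows that the shape added by these two insertions depends only on the multiset $\{b_p,b_{p+1}\}$ up to data that is preserved when $b_p$ is raised by $f_i$. This effectively slides the modified position one step to the right while preserving $\lambda^{(p+1)}$, allowing the induction to close.

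The main obstacle is the delicacy of mixed insertion itself. It alternates between row-bumping and column-bumping modes with a special priming step when an unprimed letter on the main diagonal is bumped; changing $b_p$ from $i$ to $i+1$ can switch the bumping mode mid-chain or move the bumping path across the main diagonal, and the branching between cases (2) and (3) in the insertion algorithm must be controlled carefully. The technical heart of the proof will therefore be a \emph{bumping-path lemma} that, under the unbracketed hypothesis on position $p$, precisely describes how the bumping chains for $i$ and for $i+1$ agree after the first few steps and terminate at the same outer cell; and a parallel commutation lemma that tracks what happens when this changed letter is subsequently shifted past $b_{p+1}$. Once such local lemmas are established, the inductive scheme described above yields the invariance of the full shape sequence, and hence of $R_{\mathrm{HM}}$, on connected components.
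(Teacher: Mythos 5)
Your reduction to invariance of the shape sequence under a single $f_i$ is the right first step (note, though, that $f_i$ acts on the \emph{rightmost}, not leftmost, unbracketed $i$ under the paper's bracketing convention), but the inductive scheme does not close. The hypothesis you carry through the induction --- that the shapes $\lambda^{(s)}$ and $\lambda'^{(s)}$ agree --- is too weak: to show that inserting $b_{s+1}$ into $\mathbf{T}_s$ and into $\mathbf{T}'_s$ adds a box in the same cell, you must know precisely how these two \emph{fillings} differ, not merely that they have the same shape; two distinct primed tableaux of equal shape generically receive the same letter in different outer cells. The proposed commutation step is also not a well-defined operation on words: after $f_i$ raises $b_p$, there is no word with the modification ``slid to position $p+1$'' to compare against, and the pair of cells added by inserting $b_p$ then $b_{p+1}$ depends on their order, not just on the multiset $\{b_p,b_{p+1}\}$. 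Even your base case needs a nontrivial global input: to rule out, say, a $j=i+1$ sitting in the first row of $\mathbf{T}_{h-1}$ (in which case inserting $i$ bumps it to the second row while inserting $j$ appends to the first, landing in different cells), you must relate the bracketing of the word to the content of the insertion tableau. That is exactly the reading-word statement (Lemma~\ref{lemma.main}) which the paper establishes by its own long induction, and which, together with an explicit description of how $P_{\mathrm{HM}}(\mathbf{b})$ and $P_{\mathrm{HM}}(f_i\mathbf{b})$ differ as tableaux (Theorem~\ref{theorem.main2}), is the real content of the paper's second proof. Your outline promises this machinery under the names ``bumping-path lemma'' and ``commutation lemma'' without supplying it, and as set up the induction cannot be completed.

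For comparison, the paper's primary proof of this statement avoids all bumping analysis: it invokes Haiman's duality $R_{\mathrm{HM}}(\mathbf{b}) = P_{\mathrm{WS}}(\mathbf{b}^{-1})$, the classical fact that words in one crystal component share an RSK recording tableau (so the second lines of their inverses are Knuth equivalent), and Haiman's theorem that $P_{\mathrm{WS}}$ is invariant under shifted Knuth transformations. If you want a short self-contained argument, that is the route to take; the direct route you sketch is viable in principle but requires the full strength of Lemma~\ref{lemma.main} and Theorem~\ref{theorem.main2}.
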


Before we provide the proof of Theorem~\ref{theorem.main0}, we need to define one more insertion 
from~\cite{Haiman.1989}, which serves as a dual to the previously discussed mixed insertion.

We use the notion of \defn{generalized permutations}. Similar to a regular permutation in two-line notation, a 
generalized permutation $w$ consists of two lines 
$\binom{a_1 a_2\cdots a_h}{b_1 b_2 \cdots b_h}$, which gives a 
correspondence between $a_s$ and $b_s$, but there can be repeated letters now. We order the pairs 
$(a_s, b_s)$ by making the top line weakly increasing $a_1 \leqslant\cdots \leqslant a_h$, and forcing 
$b_{s} \leqslant b_{s+1}$ whenever $a_s = a_{s+1}$. The inverse of a generalized permutation $w^{-1}$ consists of 
pairs $(b_s, a_s)$, ordered appropriately. Given a word $\mathbf{b} = b_1\ldots b_h$, it can be represented as a generalized 
permutation $w$ by setting the first line of the permutation to be $1\ 2\ \ldots h$ and the second line to be 
$b_1\ b_2\ \ldots b_h$. Since the inverse of the generalized permutation $w$ exists, it also defined $\mathbf{b}^{-1}$.

Now, let $w=\binom{a_1 a_2\cdots a_h}{b_1 b_2 \cdots b_h}$ be a generalized permutation on the alphabet $X$,
where the second line consists of distinct letters. We recursively construct a sequence of tableaux 
$(\emptyset, \emptyset) = (\mathbf{Q}_0, \mathbf{T}_0),$ $(\mathbf{Q}_1, \mathbf{T}_1), \ldots, (\mathbf{Q}_h, \mathbf{T}_h)
 = (\mathbf{Q}, \mathbf{T})$, where $\mathbf{Q}_s \in \mathcal{ST}(\lambda_s)$ and $\mathbf{T}_s 
 \in \mathcal{PT}(\lambda_s)$. To obtain the tableau $\mathbf{Q}_{s}$, insert the letter $b_s$ into $\mathbf{Q}_{s-1}$ 
 as follows:
\begin{itemize}
\item
Insert $b_s$ into the first row of $\mathbf{Q}_{s-1}$, and insert each bumped element into the next row until either 
an element is inserted into an empty cell and the algorithm terminates, or an element $b$ has been bumped from the 
diagonal. In the latter case, insert $b$ into the column to its right and continue bumping by columns, until an empty cell is 
filled.
\item
The shapes of $\mathbf{Q}_{s-1}$ and $\mathbf{Q}_s$ differ by one box. Add that box to $\mathbf{T}_{s-1}$ with a letter 
$a_s$ in it. Prime that letter if a diagonal element has been bumped in the process of inserting $b_s$ into 
$\mathbf{Q}_{s-1}$. 
\end{itemize}

The above insertion process is called a \defn{Worley--Sagan insertion algorithm}. The insertion tableau 
$\mathbf{Q}$ will be denoted by $P_{\mathrm{WS}} (w)$ and the recording tableau $\mathbf{T}$ is denoted by 
$R_{\mathrm{WS}} (w)$.

\begin{theorem} \cite[Theorem 6.10 and Corollary 6.3]{Haiman.1989}
\label{theorem.insertion dual}
Given $\mathbf{b} \in \mathcal{B}^h$, we have $R_{\mathrm{HM}} (\mathbf{b}) = P_{\mathrm{WS}} (\mathbf{b}^{-1})$.
\end{theorem}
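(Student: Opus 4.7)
The approach is to prove the theorem directly by analyzing how mixed insertion responds to the action of crystal operators on words, without invoking the duality of Theorem~\ref{theorem.insertion dual}. The strategy is to track the growing sequence of shapes produced by mixed insertion and show that applying a single crystal operator does not alter this sequence.

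First, since connected components of $\mathcal{B}^h$ are generated by the operators $e_i, f_i$, it suffices to show that $R_{\mathrm{HM}}(f_i \mathbf{b}) = R_{\mathrm{HM}}(\mathbf{b})$ whenever $f_i \mathbf{b} \neq \mathbf{0}$ (the statement for $e_i$ then follows by partial inversion). By the definition of $f_i$, there is a unique position $k_0$ such that $b_{k_0} = i$ is changed to $(f_i \mathbf{b})_{k_0} = i+1$, and all other letters agree; explicitly, $k_0$ is the position of the leftmost $i$ left unbracketed in the $\{i, i+1\}$-subword of $\mathbf{b}$. Since $R_{\mathrm{HM}}(\mathbf{b})$ is determined by the sequence of shapes of the partial insertion tableaux $\mathbf{T}_0 \subsetneq \mathbf{T}_1 \subsetneq \cdots \subsetneq \mathbf{T}_h$ together with the locations of the newly added boxes, I would reduce the theorem to showing that the two sequences (for $\mathbf{b}$ and for $f_i \mathbf{b}$) produce identical shape-growth data.

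For $s < k_0$ the prefixes are identical and there is nothing to check. For $s \geq k_0$, I would proceed by induction on $s - k_0$, carrying along the invariant that $\mathbf{T}_s$ and $\mathbf{T}'_s$ (the tableaux obtained from $\mathbf{b}$ and $f_i \mathbf{b}$ respectively) have the same shape and that their entries differ only inside a controlled ``mismatch region'' whose entries are drawn from $\{i', i, (i+1)', i+1\}$. The base case $s = k_0$ is the core computation: compare the bumping paths triggered by inserting $i$ versus $i+1$ into the common tableau $\mathbf{T}_{k_0 - 1}$. Because $b_{k_0}$ is the leftmost unbracketed $i$, every $i+1$ in $\mathbf{T}_{k_0 - 1}$ can be paired with an earlier $i$; this constrains the configuration of these letters in $\mathbf{T}_{k_0 - 1}$ enough to force both bumping paths to terminate at the same cell. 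The inductive step amounts to inserting the common letter $b_{s+1}$ into $\mathbf{T}_s$ and $\mathbf{T}'_s$ and verifying that the shape grows by the same box and that the mismatch region evolves into another region of the prescribed form.

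The main obstacle will be the case analysis for the inductive step: mixed insertion has three distinct bumping rules (row bump off the diagonal, column bump off the diagonal, and the prime-and-column bump on the diagonal), and each must be checked to preserve the mismatch invariant. The most delicate situation is when the incoming letter $b_{s+1}$ enters the mismatch region and triggers a diagonal bump, which converts an unprimed letter into a primed one and switches the bumping from rows to columns; keeping careful track of how the mismatch region deforms in this case is the crux of the argument. As a consistency check, one could reconcile the conclusion with Theorem~\ref{theorem.insertion dual} by verifying the analogous statement for Worley--Sagan insertion on the inverse biword, but the plan above is self-contained within the mixed insertion framework.
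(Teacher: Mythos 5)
Your proposal does not address the statement at hand. Theorem~\ref{theorem.insertion dual} asserts a duality between two different insertion algorithms applied to two different objects: the recording tableau of the shifted mixed insertion of $\mathbf{b}$ equals the insertion tableau of the Worley--Sagan insertion of the inverse biword $\mathbf{b}^{-1}$. This is the shifted analogue of the RSK symmetry $Q(w)=P(w^{-1})$, and it is a statement about a single fixed word; crystal operators play no role in it. Your sketch never engages with $P_{\mathrm{WS}}$ or $\mathbf{b}^{-1}$ except in a closing aside, and what it actually sets out to establish --- that $R_{\mathrm{HM}}(f_i\mathbf{b}) = R_{\mathrm{HM}}(\mathbf{b})$, hence that $R_{\mathrm{HM}}$ is constant on connected components of $\mathcal{B}^h$ --- is a different result, namely Theorem~\ref{theorem.main0} of the paper. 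Proving that the recording tableau is a crystal invariant cannot yield the identity $R_{\mathrm{HM}}(\mathbf{b})=P_{\mathrm{WS}}(\mathbf{b}^{-1})$, since the latter pins down the actual value of $R_{\mathrm{HM}}(\mathbf{b})$ for every individual word, not merely its constancy along crystal edges. The logical flow in the paper is the opposite of what you suggest: Theorem~\ref{theorem.insertion dual} is imported from Haiman (Theorem~6.10 and Corollary~6.3 of that paper, where it is proved by a direct analysis of the two bumping procedures), and it is then combined with Theorem~\ref{theorem.haiman WS} to deduce Theorem~\ref{theorem.main0}.

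Even judged as a proof of Theorem~\ref{theorem.main0}, the sketch is only a plan: the ``core computation'' and the ``case analysis for the inductive step'' that you defer are precisely where all of the difficulty lives; the paper's self-contained argument of this type is the proof of Theorem~\ref{theorem.main2} in Appendix~\ref{section.proof main2}, which requires an extensive multi-page case analysis of exactly the diagonal-bump phenomena you flag as delicate. A further small slip: with the paper's bracketing convention, $f_i$ acts on the rightmost unbracketed $i$, not the leftmost. To actually prove Theorem~\ref{theorem.insertion dual} you would need to compare, step by step, the growth of the mixed-insertion tableaux of $\mathbf{b}$ with the Worley--Sagan insertion of the reordered biword $\mathbf{b}^{-1}$ (as Haiman does), and none of that comparison appears in your proposal.
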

Next, we want to find out when the Worley--Sagan insertion tableau is preserved. Fortunately, other results 
from~\cite{Haiman.1989} provide this description.
\begin{theorem} \cite[Corollaries 5.8 and 6.3]{Haiman.1989}
\label{theorem.haiman WS}
If two words with distinct letters $\mathbf{b}$ and $\mathbf{b}'$ are related by a shifted Knuth transformation, 
then $P_{\mathrm{WS}} (\mathbf{b}) = P_{\mathrm{WS}} (\mathbf{b}')$.
\end{theorem}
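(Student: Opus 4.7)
The theorem is attributed to Haiman, so the plan is to reconstruct his argument from the definition of the Worley--Sagan insertion. Since shifted Knuth equivalence is by definition generated by a finite list of elementary local word rewrites, the plan is to reduce to invariance under each elementary move and then handle the moves one at a time. Concretely, I would first recall the explicit list of elementary shifted Knuth transformations. In the distinct-letter setting these take the form $bac \leftrightarrow bca$ for $a<b<c$ and $acb \leftrightarrow cab$ for $a<b<c$ (acting on any three consecutive positions), together with an additional move that can swap the first two letters of the word under appropriate conditions. Shifted Knuth equivalence is the transitive closure of these moves.

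Second, I would make the standard local reduction. Suppose $\mathbf{b}'$ is obtained from $\mathbf{b}$ by one elementary move affecting positions $i,i+1,i+2$ (or $1,2$ in the case of the leading move). All insertions of letters in positions $1,\ldots,i-1$ produce an identical intermediate tableau $\mathbf{Q}_{i-1}$ for both words, so it suffices to show that inserting the affected subword into $\mathbf{Q}_{i-1}$ yields the same tableau of shape $\lambda^{(i+2)}$; inserting the common suffix then yields the same final $P_{\mathrm{WS}}$. In other words, the claim reduces to a local statement: for an \emph{arbitrary} primed tableau $\mathbf{Q}$, inserting $bac$ versus $bca$ (and similarly $acb$ versus $cab$) gives the same output.

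Third, I would perform the case analysis for each elementary move. The pure row-insertion cases are essentially identical to the classical RSK proof of Knuth-equivalence invariance: track which row the rightmost letter of the triple bumps into, and verify by inequality-chasing that the bump paths commute. The delicate cases are those in which one of the three inserted letters triggers a diagonal bump, so that the insertion switches from row-insertion to column-insertion partway through. Here I would compare the two bump sequences cell by cell, using the fact that the three triple-letters are distinct and pairwise comparable in a specific way, and check that whichever of the two words triggers the diagonal first, the ultimate set of added cells and their contents agree.

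The main obstacle is precisely this diagonal-switch case analysis: once column-insertion enters the picture, the geometry of bump paths is no longer a simple vertical cascade, and intermediate tableaux can genuinely differ between the two orderings even though the final tableaux agree. To avoid a long direct case check, an appealing alternative is to transport the problem through Theorem~\ref{theorem.insertion dual}, i.e.\ work with $R_{\mathrm{HM}}(\mathbf{b}^{-1})$ instead of $P_{\mathrm{WS}}(\mathbf{b})$, and use the known behavior of mixed insertion under dual equivalence of recording tableaux; this would reduce the problem to identifying shifted Knuth equivalence of $\mathbf{b}$ with a suitable ``jeu-de-taquin'' equivalence of $\mathbf{b}^{-1}$, which may be easier to verify directly than to grind through the diagonal-bumping cases.
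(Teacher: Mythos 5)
The paper does not prove this statement at all: it is quoted verbatim from Haiman (Corollaries 5.8 and 6.3 of~\cite{Haiman.1989}) and used as a black box in the proof of Theorem~\ref{theorem.main0}. So there is no in-paper argument to compare against; the relevant question is whether your reconstruction would stand on its own.

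As written it does not. Your first two steps (reduction to the elementary moves, and the observation that a single move affecting positions $i,i+1,i+2$ only requires a local statement about inserting the affected triple into the common intermediate tableau $\mathbf{Q}_{i-1}$) are correct and are indeed how Haiman's proof is organized. But the entire mathematical content of the theorem lies in the third step, and you explicitly leave it undone: you say you \emph{would} perform the case analysis and then identify the diagonal-switch cases as ``the main obstacle'' without resolving them. Those cases are not a routine variant of the RSK argument --- once a diagonal bump converts row-insertion into column-insertion, the two bump paths for $bac$ versus $bca$ can diverge geometrically, and showing they reconverge is precisely the hard part of Haiman's Corollary 5.8. A proof that stops at ``compare the two bump sequences cell by cell and check that they agree'' is a plan, not a proof. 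Two smaller inaccuracies: the intermediate insertion tableau $\mathbf{Q}_{i-1}$ in the Worley--Sagan algorithm is an unprimed shifted tableau with distinct entries (it is the recording tableau $\mathbf{T}$ that carries primes), and the leading swap $xy \leftrightarrow yx$ is unconditional for words with distinct letters, not subject to ``appropriate conditions.''

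Your proposed escape route --- transporting the problem through Theorem~\ref{theorem.insertion dual} to work with $R_{\mathrm{HM}}(\mathbf{b}^{-1})$ --- is also problematic in context. Theorem~\ref{theorem.insertion dual} is itself one of Haiman's duality results, and in this paper the implication runs in the opposite direction: Theorem~\ref{theorem.haiman WS} together with Theorem~\ref{theorem.insertion dual} is what lets the authors conclude that $R_{\mathrm{HM}}$ is constant on connected components. Invoking ``the known behavior of mixed insertion under dual equivalence'' to prove the present statement risks circularity unless you identify an independent source for that behavior. If you want a genuine proof, the honest path is to carry out the diagonal-bump case analysis, or to cite Haiman as the paper does.
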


Here, a \defn{shifted Knuth transformation} is an exchange of consecutive letters in one of the following forms:
\begin{enumerate}
\item Knuth transformations: $cab \leftrightarrow acb$ or $bca \leftrightarrow bac$, where $a<b<c$,
\item Worley--Sagan transformation: $xy \leftrightarrow yx$, where $x$ and $y$ are the first two letters of the word.
\end{enumerate}

We are now ready to prove the theorem.

\begin{proof}[Proof of Theorem~\ref{theorem.main0}]
If $\mathbf{b}$ and $\mathbf{b}'$ are two words in the same connected component of $\mathcal{B}^h$, their RSK-recording
tableaux $R_{\mathrm{RSK}} (\mathbf{b})$ and $R_{\mathrm{RSK}} (\mathbf{b}')$ are the same. Thus, 
$P_{\mathrm{RSK}} (\mathbf{b}^{-1})$ and $P_{\mathrm{RSK}} (\mathbf{b}'^{-1})$ are the same, and the second lines of 
$\mathbf{b}^{-1}$ and $\mathbf{b}'^{-1}$ are related by a sequence of Knuth transformations. This in turn means that 
$P_{\mathrm{WS}} (\mathbf{b}^{-1})$ and $P_{\mathrm{WS}} (\mathbf{b}'^{-1})$ are the same, and 
$R_{\mathrm{HM}} (\mathbf{b}) = R_{\mathrm{HM}} (\mathbf{b}')$ by Theorem~\ref{theorem.haiman WS}.
\end{proof}

Let us fix a recording tableau $\mathbf{Q}_{\lambda} \in \mathcal{ST} (\lambda)$. Define a map 
$\Psi_\lambda \colon \mathcal{PT}(\lambda) \rightarrow \mathcal{B}^{h}$ as $\Psi_\lambda (\mathbf{T}) = 
\mathrm{HM}^{-1} (\mathbf{T}, \mathbf{Q}_\lambda)$. By Theorem~\ref{theorem.main0}, the set  
$\mathrm{Im}(\Psi_{\lambda})$ consists of several connected components of $\mathcal{B}^h$. 
The map $\Psi_{\lambda}$ can thus be taken as a crystal isomorphism, and we can define the crystal operators 
and weight function on $\mathcal{PT}(\lambda)$ as
\begin{equation}
\label{equation.ef}
	e_i(\mathbf{T}) := (\Psi_\lambda^{-1} \circ e_i \circ \Psi_\lambda) (\mathbf{T}), \quad f_i(\mathbf{T}) 
	:= (\Psi_\lambda^{-1} \circ f_i \circ \Psi_\lambda) (\mathbf{T}), \quad \wt(\mathbf{T}) := (\wt \circ \Psi_\lambda) (\mathbf{T}).
\end{equation}

Although it is not clear that the crystal operators constructed above are independent of the choice of $\mathbf{Q}_\lambda$, 
in the next section we will construct explicit crystal operators on the set $\mathcal{PT}(\lambda)$ that satisfy the relations 
above and do not depend on the choice of $\mathbf{Q}_\lambda$.

\begin{example}
For $\mathbf{T} = \young(1\twop2\threep3,:2\threep3,::3)$, choose $\mathbf{Q}_{\lambda} = \young(12345,:678,::9)$.
Then $\Psi_\lambda (\mathbf{T}) = 333332221$ and $e_1 \circ \Psi_\lambda (\mathbf{T}) = 333331221$. Thus,
\begin{equation*}
\ e_1 (\mathbf{T}) = (\Psi_\lambda^{-1} \circ e_1 \circ \Psi_\lambda) (\mathbf{T})  = \young(112\threep3,:2\threep3,::3), \quad f_1(\mathbf{T}) = f_2(\mathbf{T}) = \mathbf{0}.
\end{equation*}
\end{example}

To summarize, we obtain a crystal isomorphism between the crystal $(\mathcal{PT}(\lambda), e_i, f_i, \wt)$, denoted again 
by $\mathcal{PT}(\lambda)$, and a direct sum $\bigoplus_\mu \mathcal{B}_{\mu}^{\oplus h_{\lambda\mu}}$. We will provide a
combinatorial description of the coefficients $h_{\lambda\mu}$ in the next section. This implies the relation on characters 
of the corresponding crystals $\chi_{\mathcal{PT}(\lambda)} = \sum_\mu h_{\lambda\mu} s_\mu$. Thus we can
rewrite~\eqref{equation.PT} one last time
\begin{equation*}
	F^C_{w}(\mathbf{x}) = \sum_{\lambda} 2^{\ell(\lambda)} \big|\mathcal{UT}_w (\lambda) \big|  \sum_{\mu} 
	h_{\lambda\mu} s_\mu = \sum_\mu \Big( \sum_\lambda 2^{\ell(\lambda)} \big|\mathcal{UT}_w (\lambda) 
	\big|\ h_{\lambda\mu} \Big) s_\mu.
\end{equation*}

\section{Explicit crystal operators on shifted primed tableaux}
\label{section.explicit}

We consider the alphabet $X'=\{1' < 1 < 2' < 2 < 3' < \cdots\}$ of primed and unprimed letters. It is useful to think about the 
letter $(i+1)'$ as a number $i + 0.5$. Thus, we say that letters $i$ and $(i+1)'$ differ by half a unit and letters $i$ and 
$(i+1)$ differ by a whole unit.

Given an (unsigned) primed tableau $\mathbf{T}$, we construct the \defn{reading word}  $\mathrm{rw}(\mathbf{T})$ as follows:
\begin{enumerate}
\item List all primed letters in the tableau, column by column, from top to bottom within each column, moving from the 
rightmost column to the left, and with all the primes removed (i.e. all letters are increased by half a unit).
(Call this part of the word the \defn{primed reading word}.)
\item Then list all unprimed elements, row by row, from left to right within each row, moving from the bottommost 
row to the top. (Call this part of the word the \defn{unprimed reading word}.)
\end{enumerate}

To find the letter on which the crystal operator $f_i$ acts, apply the bracketing rule for letters $i$ and $i+1$ within the 
reading word $\mathrm{rw}(\mathbf{T})$. 
If all letters $i$ are bracketed in $\mathrm{rw}(\mathbf{T})$, then $f_i(\mathbf{T}) = \mathbf{0}$. 
Otherwise, the rightmost unbracketed letter $i$ in $\mathrm{rw}(\mathbf{T})$ corresponds to an $i$ or an $i'$ in 
$\mathbf{T}$, which we call \defn{bold unprimed} $i$ or \defn{bold primed} $i$ respectively.

If the bold letter $i$ is unprimed, denote the cell it is located in as $x$.

If the bold letter $i$ is primed, we \textit{conjugate} the tableau $\mathbf{T}$ first.

The \defn{conjugate} of a primed tableau $\mathbf{T}$ is obtained by reflecting the tableau over the main diagonal, 
changing all primed entries $k'$ to $k$ and changing all unprimed elements $k$ to $(k+1)'$ (i.e. increase the entries of all 
boxes by half a unit). The main diagonal is now the North-East boundary of the tableau. Denote the resulting tableau as 
$\mathbf{T}^*$.

Under the transformation $\mathbf{T} \to \mathbf{T}^*$, the bold primed $i$ is transformed into bold unprimed $i$.
Denote the cell it is located in as $x$.

Given any cell $z$ in a shifted primed tableau $\mathbf{T}$ (or conjugated tableau $\mathbf{T}^*$), denote by 
$c(z)$ the entry contained in cell $z$.
Denote by $z_E$ the cell to the right of $z$, $z_W$ the cell to its left, $z_S$ the cell below, and $z_N$ the cell above.
Denote by $z^*$ the corresponding conjugated cell in $\mathbf{T}^*$ (or in $\mathbf{T}$).
Now, consider the box $x_E$ (in $\mathbf{T}$ or in $\mathbf{T}^*$) and notice that $c(x_E) \geqslant (i+1)'$.\\

\noindent
\textbf{Crystal operator $f_i$ on primed tableaux:}

\begin{enumerate}
	\item If $c(x_E) =  (i+1)'$, the box $x$ must lie outside of the main diagonal and the box immediately below $x_E$ cannot
	contain $(i+1)'$. Change $c(x)$ to $(i+1)'$ and
	change $c(x_E)$ to $(i+1)$ (i.e. increase the entry in cell $x$ and $x_E$ by half a unit). 
	\item If $c(x_E) \neq (i+1)'$ or $x_E$ is empty, then there is a 
	maximal connected ribbon (expanding in South and West directions) with the following properties:
	\begin{enumerate}
		\item The North-Eastern most box of the ribbon (the tail of the ribbon) is $x$.
		\item The entries of all boxes within a ribbon besides the tail are either $(i+1)'$ or $(i+1)$.
	\end{enumerate}
	Denote the South-Western most box of the ribbon (the head) as $x_H$. 
	\begin{enumerate}
		\item If $x_H = x$, change $c(x)$ to $(i+1)$ (i.e. increase the 
		entry in cell $x$ by a whole unit).
		\item If $x_H \neq x$ and $x_H$ is on the main diagonal (in case of a tableau $\mathbf{T}$), change $c(x)$ 
		to $(i+1)'$ (i.e. increase the entry in cell $x$ by half a unit).
		\item Otherwise, $c(x_H)$ must be $(i+1)'$ due to the bracketing rule. We change $c(x)$ to $(i+1)'$ 
		and change $c(x_H)$ to $(i+1)$ (i.e. increase the entry in cell $x$ and $x_H$ by half a unit).
	\end{enumerate}
\end{enumerate}

In the case when the bold $i$ in $\mathbf{T}$ is unprimed, we apply the above crystal operator rules to $\mathbf{T}$
to find $f_i(\mathbf{T})$

\begin{example}
We apply operator $f_2$ on the following tableaux. The bold letter is marked if it exists:
\begin{enumerate}
\item $\mathbf{T} = \young(1\twop2\threep,:2\threep3)\ $, $\mathrm{rw}(\mathbf{T}) = 3322312$, thus 
$f_2(\mathbf{T}) = \mathbf{0}$;\\ 

\item $\mathbf{T} = \young(1\twop\boldtwo\threep,:2\threep4)\ $, $\mathrm{rw}(\mathbf{T}) = 3322412$, 
thus $f_2(\mathbf{T}) = \young(1\twop\threep3,:2\threep4)$ by Case (1). \\

\item $\mathbf{T} = \young(112\boldtwo,:3\fourp4)\ $, $\mathrm{rw}(\mathbf{T}) = 4341122$,
thus $f_2(\mathbf{T}) = \young(1123,:3\fourp4)$ by Case (2a).\\

\item $\mathbf{T} = \young(11\twop\boldtwo3,:22\threep,::33)$, $\mathrm{rw}(\mathbf{T}) = 3233221123$, 
thus $f_2(\mathbf{T}) = \young(11\twop\threep3,:22\threep,::33)$ by Case~(2b).\\

\item $\mathbf{T} = \young(111\boldtwo3,:22\threep,::3\fourp)$, $\mathrm{rw}(\mathbf{T}) = 3432211123$, 
thus $f_2(\mathbf{T}) = \young(111\threep3,:223,::3\fourp)$ by Case~(2c).
\end{enumerate}
\end{example}

In the case when the bold $i$ is primed in $\mathbf{T}$, we first conjugate $\mathbf{T}$ and then apply the above crystal 
operator rules on $\mathbf{T}^*$, before reversing the conjugation. 
Note that Case~(2b) is impossible for $\mathbf{T}^*$, since the main diagonal is now on the North-East.

\begin{example}
	\begin{equation*}
		\text{Let} \ 
		\mathbf{T} = \young(1\boldtwop23,:3\fourp,::4)\ ,
		\quad
		\text{then} \ 
		\mathbf{T}^* = \young(\twop,\boldtwo\fourp,\threep4\fivep,\fourp)
		\quad
		\text{and} \
		f_2 (\mathbf{T}) = \young(12\threep3,:3\fourp,::4)\ .
	\end{equation*}
\end{example}

\begin{theorem}
\label{theorem.main2}
	 For any $\mathbf{b} \in \mathcal{B}^h$ with $P_{\mathrm{HM}}(\mathbf{b}) = \mathbf{T}$ and 
	 $f_i(\mathbf{b})\neq \mathbf{0}$, the operator $f_i$ defined on  above satisfies
	\begin{equation*}
	P_{\mathrm{HM}}(f_i(\mathbf{b})) = f_i(\mathbf{T}).
	\end{equation*}
	Also, $f_i(\mathbf{b}) = \mathbf{0}$ if and only if $f_i(\mathbf{T})=\mathbf{0}$.
\end{theorem}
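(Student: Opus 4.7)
The plan is to reduce the theorem to a careful analysis of how mixed insertion interacts with the $\{i,i+1\}$--bracketing on words. The starting observation is that the crystal operator $f_i$ on $\mathbf{b}$ only depends on the positions of $i$'s and $(i+1)$'s in $\mathbf{b}$, so its effect on $\mathbf{T}=P_{\mathrm{HM}}(\mathbf{b})$ is governed entirely by how these letters are distributed in $\mathbf{T}$. The first step is to prove a correspondence lemma: after restricting $\mathbf{b}$ to the subword in $\{i,i+1\}$ and restricting $\mathrm{rw}(\mathbf{T})$ to its $\{i,i+1\}$--subword (with primed $i+1$ counted as $i$ according to the half-unit convention), the two bracketings agree, in the sense that the rightmost unbracketed $i$ in $\mathbf{b}$ corresponds under the insertion to a uniquely determined cell $x$ of $\mathbf{T}$ containing either $i$ or $i'$. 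This cell is precisely the bold letter produced by the reading word rule.

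Second, having identified the bold cell $x$, I would trace the insertion of $f_i(\mathbf{b})$, which differs from $\mathbf{b}$ only by promoting that one letter from $i$ to $i+1$. Since all intermediate insertions before and after that modified letter proceed identically up to the moment of the change, the task reduces to analyzing how a single bump of $i \to i+1$ propagates in $\mathbf{T}$. I would verify that the propagation path is exactly the ribbon described in Case~(2) of the explicit rule: the tail of the ribbon is $x$, each subsequent bump occurs into a cell containing $(i+1)'$ or $(i+1)$ (with primed entries forcing column-bumps and unprimed entries forcing row-bumps, precisely because of the bump rules of $\mathrm{HM}$), and the ribbon terminates either by reaching an empty slot at $x$ (Case 2a), by hitting the main diagonal where an unprimed $(i+1)$ must be primed (Case 2b), or by a final swap with an $(i+1)'$ entry $x_H$ (Case 2c). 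Case~(1), where $c(x_E)=(i+1)'$, is the degenerate situation in which the bump immediately resolves within two adjacent cells.

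Third, the case when the bold $i$ is primed requires passing through the conjugate tableau $\mathbf{T}^*$. Here I would use the fact that conjugation of a primed tableau (reflect and swap primed/unprimed with a half-unit shift) intertwines the row-bumping and column-bumping phases of $\mathrm{HM}$: in $\mathbf{T}^*$ the main diagonal becomes the North-East boundary and the roles of ``primed column-bumps'' and ``unprimed row-bumps'' are exchanged. This symmetry, already implicit in Haiman's treatment of $\mathrm{HM}$ and in the Worley--Sagan duality stated in Theorem~\ref{theorem.insertion dual}, allows me to transfer the analysis of the unprimed case to the primed case on $\mathbf{T}^*$, with the additional observation that Case~(2b) cannot occur after conjugation since the main diagonal is no longer a South-West barrier.

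The main obstacle is the case analysis inside Case~(2). Verifying that the ribbon geometry precisely matches the insertion path requires a careful bookkeeping of when a bump stays within a row, when it jumps from a row to a column (triggered by a primed entry or by crossing the diagonal), and how the shape of the tableau is or is not altered. In particular, showing that the terminating behaviors (2a), (2b), (2c) exhaust all possibilities and that the bracketing rule guarantees $c(x_H)=(i+1)'$ in subcase (2c) is the subtle core of the argument; this is the step that justifies deferring the proof to Appendix~\ref{section.proof main2}. The ``only if'' direction of the last sentence—that $f_i(\mathbf{b})=\mathbf{0}$ iff $f_i(\mathbf{T})=\mathbf{0}$—follows immediately from the bracketing correspondence of the first step, since $f_i$ annihilates in both settings exactly when there is no unbracketed $i$.
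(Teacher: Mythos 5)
Your first step---the bracketing correspondence between $\mathbf{b}$ and $\mathrm{rw}(\mathbf{T})$ identifying the bold cell $x$---is exactly the paper's Lemma~\ref{lemma.main}, and your closing observation that the annihilation statement follows from it is also how the paper obtains Corollary~\ref{corollary.f annihilate}. The gap is in your second step. You claim that since $\mathbf{b}$ and $f_i(\mathbf{b})$ differ in a single letter at position $s$, ``all intermediate insertions before and after that modified letter proceed identically,'' so that the whole problem reduces to tracking how one bump of $i\to i+1$ propagates. The insertions \emph{before} position $s$ coincide, but the insertions \emph{after} position $s$ are performed on two different tableaux (one received an $i$ at step $s$, the other a $j$), and their bumping paths need not agree at all; the changed cell can be bumped around the tableau, primed, moved across the diagonal, etc., by every subsequent letter. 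This is precisely the hard part. The paper handles it by induction on $h$: the base case is when the bold $i$ is the \emph{last} letter (where your ``single localized change'' picture is actually valid), and the induction step shows that if $\mathbf{T}'=f_i(\mathbf{T})$ for the tableaux of the length-$(h-1)$ prefixes, then inserting one more letter $b_h$ into each preserves the relation, i.e.\ $f_i(\mathbf{T}\leftsquigarrow b_h)=\mathbf{T}'\leftsquigarrow b_h$ --- with a long case analysis over which case of the crystal rule the pair falls into before and after the insertion (the pair can migrate between Cases (1), (2a), (2b), (2c) as letters are inserted). Your plan contains no mechanism for this propagation-through-subsequent-insertions, which is where essentially all of Appendix~\ref{section.proof main2} lives.

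A related conceptual misreading: the ribbon in Case~(2) of the crystal rule is a static configuration of $(i+1)'$ and $(i+1)$ entries in the finished tableau $\mathbf{T}$, not the bumping path of any insertion, and $f_i$ modifies only its two endpoints $x$ and $x_H$, leaving the interior of the ribbon untouched. So ``verify that the propagation path is exactly the ribbon'' is not a statement that can be made true; the bumping path of inserting $j$ in place of $i$ at step $s$ generally has nothing to do with that ribbon. Your third step (conjugation for the primed case) is fine in spirit and matches the paper's Case~4, but it inherits the same gap.
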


The proof of Theorem~\ref{theorem.main2} is quite technical and is relegated to Appendix~\ref{section.proof main2}.  
It implies that the explicit operators $f_i$ in this section are indeed equal to those defined in~\eqref{equation.ef}
and that they are independent of the choice of $\mathbf{Q}_\lambda$. 
We also immediately obtain:

\begin{proof}[Second proof of Theorem~\ref{theorem.main0}]
Given a word $\mathbf{b}=b_1\ldots b_h$, let $\mathbf{b}'= f_i(\mathbf{b}) = b'_1 \ldots b'_h$, so that $b_m \neq b'_m$ 
for some $m$ and $b_i = b'_i$ for any $i \neq m$. We show that $Q_{\mathrm{HM}} (\mathbf{b}) = 
Q_{\mathrm{HM}} (\mathbf{b}')$.

Denote $\mathbf{b}^{(s)} = b_1\ldots b_s$ and similarly $\mathbf{b}'^{(s)} = b'_1\ldots b'_s$.
Due to the construction of the recording tableau $Q_{\mathrm{HM}}$, it suffices to show that 
$P_{\mathrm{HM}}(\mathbf{b}^{(s)})$ and $P_{\mathrm{HM}}(\mathbf{b}'^{(s)})$ have the same shape for any 
$1 \leqslant s \leqslant h$.

If $s < m$, this is immediate. If $s \geqslant m$, note that $\mathbf{b}'^{(s)}=f_i(\mathbf{b}^{(s)})$.
Using Theorem~\ref{theorem.main2}, one can see that $P_{\mathrm{HM}} (\mathbf{b}'^{(s)}) 
= P_{\mathrm{HM}}(f_i(\mathbf{b}^{(s)})) = f_i(P_{\mathrm{HM}}(\mathbf{b}^{(s)}))$ has the same shape 
as $P_{\mathrm{HM}}(\mathbf{b}^{(s)})$.
\end{proof} 

The next step is to describe the raising operators $e_i (\mathbf{T})$.
Consider the reading word $\mathrm{rw}(\mathbf{T})$ and apply the bracketing rule on the letters $i$ and $i+1$.
If all letters $i+1$ are bracketed in $\mathrm{rw}(\mathbf{T})$, then $e_i(\mathbf{T}) = \mathbf{0}$. 
Otherwise, the leftmost unbracketed letter $i+1$ in $\mathrm{rw}(\mathbf{T})$ corresponds to an $i+1$ or an $(i+1)'$ in 
$\mathbf{T}$, which we will call bold unprimed $i+1$ or bold primed $i+1$, respectively.
If the bold $i+1$ is unprimed, denote the cell it is located in by $y$.
If the bold $i+1$ is primed, conjugate $\mathbf{T}$ and denote the cell with the bold $i+1$ in $\mathbf{T}^*$ by $y$.\\

\noindent
\textbf{Crystal operator $e_i$ on primed tableaux:}
\begin{enumerate}
	\item If $c(y_W) =  (i+1)'$, then change $c(y)$ to $(i+1)'$ and
	change $c(y_W)$ to $i$ (i.e. decrease the entry in cell $y$ and $y_W$ by half a unit). 
	\item If $c(y_W) < (i+1)'$ or $y_W$ is empty, then there is a 
	maximal connected ribbon (expanding in North and East directions) with the following properties:
	\begin{enumerate}
		\item The South-Western most box of the ribbon (the head of the ribbon) is $y$.
		\item The entry in all boxes within a ribbon besides the tail is either $i$ or $(i+1)'$.
	\end{enumerate}
	Denote the North-Eastern most box of the ribbon (the tail) as $y_T$. 
	\begin{enumerate}
		\item If $y_T = y$, change $c(y)$ to $i$ (i.e. decrease the 
		entry in cell $y$ by a whole unit).
		\item If $y_T \neq y$ and $y_T$ is on the main diagonal (in case of a conjugate tableau $\mathbf{T}^*$), 
		then change $c(y)$ to 
		$(i+1)'$ (i.e. decrease the entry in cell $y$ by half a unit). 
		\item If $y_T \neq y$ and $y_T$ is not on the diagonal, the entry of cell $y_T$ must be $(i+1)'$ 
		and we change $c(y)$ to $(i+1)'$ and change $c(y_T)$ to $i$ (i.e. decrease the entry of cell $y$ 
		and $y_T$ by half a unit).
	\end{enumerate}
\end{enumerate}
When the bold $i+1$ is unprimed, $e_i(\mathbf{T})$ is obtained by applying the rules above to $\mathbf{T}$. 
When the bold $i+1$ is primed, we first conjugate $\mathbf{T}$, then apply the raising crystal operator rules on 
$\mathbf{T}^*$, and then reverse the conjugation. 

\begin{proposition}
	\begin{equation*}
		e_i (\mathbf{b}) = \mathbf{0} \quad \text{if and only if} \quad e_i (\mathbf{T}) = \mathbf{0}.
	\end{equation*}
\end{proposition}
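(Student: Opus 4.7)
The plan is to derive the proposition from Theorem~\ref{theorem.main2} together with the observation that the explicitly defined operators $e_i$ and $f_i$ on primed tableaux are mutually inverse, which reduces the statement to a comparison of the images of $f_i$ on the two sides of $P_{\mathrm{HM}}$.

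First I would verify by a case-by-case inspection of the rules in Section~\ref{section.explicit} that whenever $f_i(\mathbf{T}) \neq \mathbf{0}$ one has $e_i(f_i(\mathbf{T})) = \mathbf{T}$, and dually whenever $e_i(\mathbf{T}) \neq \mathbf{0}$ one has $f_i(e_i(\mathbf{T})) = \mathbf{T}$. The four cases of $f_i$ match the four cases of $e_i$ in reverse: Case~(1) swaps an $i$ with an adjacent $(i+1)'$ to its east, and Case~(1) of $e_i$ undoes this; Case~(2a) is a single-cell change from $i$ to $i{+}1$; Cases~(2b) and~(2c) modify the two endpoints of a ribbon of entries in $\{(i{+}1)', i{+}1\}$, and the ribbon identified when computing $f_i$ from its tail coincides, after the operator is applied, with the ribbon identified when computing $e_i$ from its head. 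In particular, both $f_i$ and $e_i$ preserve the shape of $\mathbf{T}$, and $e_i(\mathbf{T}) = \mathbf{0}$ exactly when $\mathbf{T} \notin \mathrm{Im}(f_i)$.

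Next I would show $\mathbf{T} \in \mathrm{Im}(f_i)$ if and only if $\mathbf{b} \in \mathrm{Im}(f_i)$. The forward direction is immediate from Theorem~\ref{theorem.main2}: if $\mathbf{b} = f_i(\mathbf{b}')$ then $\mathbf{T} = P_{\mathrm{HM}}(\mathbf{b}) = f_i(P_{\mathrm{HM}}(\mathbf{b}'))$. For the reverse direction, given $\mathbf{T} = f_i(\mathbf{T}')$, I would set $\mathbf{b}' := \mathrm{HM}^{-1}(\mathbf{T}', R_{\mathrm{HM}}(\mathbf{b}))$, which is well-defined because $\mathbf{T}'$ and $\mathbf{T}$ have the same shape by the previous step. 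The ``also'' clause of Theorem~\ref{theorem.main2} applied to $\mathbf{b}'$ ensures $f_i(\mathbf{b}') \neq \mathbf{0}$, and then Theorem~\ref{theorem.main2} gives $P_{\mathrm{HM}}(f_i(\mathbf{b}')) = f_i(\mathbf{T}') = \mathbf{T} = P_{\mathrm{HM}}(\mathbf{b})$, while Theorem~\ref{theorem.main0} gives $R_{\mathrm{HM}}(f_i(\mathbf{b}')) = R_{\mathrm{HM}}(\mathbf{b}') = R_{\mathrm{HM}}(\mathbf{b})$. Injectivity of $\mathrm{HM}$ forces $f_i(\mathbf{b}') = \mathbf{b}$, so $\mathbf{b} \in \mathrm{Im}(f_i)$. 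The proposition then follows by combining these equivalences with the standard fact on the type~$A$ word crystal $\mathcal{B}^h$ that $e_i(\mathbf{b}) = \mathbf{0}$ iff $\mathbf{b} \notin \mathrm{Im}(f_i)$.

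The main obstacle is the inverseness verification in the first step. The subtlety is that the bold letter for $f_i$ and the bold letter for $e_i$ are located by applying the bracketing rule to the reading word, which can be altered by the operator, and the situation is further complicated by the conjugation trick used when the bold letter is primed. I would need to check that after applying $f_i$ the rightmost unbracketed $i$ has been replaced by the leftmost unbracketed $i{+}1$ in exactly the corresponding cell (or its conjugate), and that the ribbon of $(i{+}1)'$ and $i{+}1$ entries traced by $f_i$ is recovered, with endpoints reversed, by the rule defining $e_i$. The boundary cases~(2b), where the ribbon head meets the main diagonal, and Case~(1), where the ribbon has length~one and merges with the horizontal-swap rule, are the ones most prone to error and deserve the most careful treatment.
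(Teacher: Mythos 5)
Your overall logic is sound, but you have taken a much heavier route than the one the paper uses, and the step you defer is the real content. The paper's proof is a three-line counting argument: Lemma~\ref{lemma.main} (already established en route to Theorem~\ref{theorem.main2}) says the number of unbracketed letters $i$ agrees between $\mathbf{b}$ and $\mathrm{rw}(\mathbf{T})$; since the total numbers of $i$'s and of $j$'s also agree, the numbers of unbracketed $j$'s agree, and the proposition follows immediately from the definitions of $e_i$ on words and on tableaux. Your argument instead reduces everything to the statement that $e_i$ and $f_i$ on primed tableaux are two-sided inverses of each other, from which ``$e_i(\mathbf{T})=\mathbf{0}$ iff $\mathbf{T}\notin\mathrm{Im}(f_i)$'' follows; the transfer between $\mathbf{T}\in\mathrm{Im}(f_i)$ and $\mathbf{b}\in\mathrm{Im}(f_i)$ via Theorems~\ref{theorem.main2} and~\ref{theorem.main0} and injectivity of $\mathrm{HM}$ is carried out correctly, and the final appeal to the seminormality of the word crystal is fine.

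The problem is the first step. Of the two inverse relations you need, only $e_i(f_i(\mathbf{T}))=\mathbf{T}$ is proved in the paper (Theorem~\ref{theorem.main3}), and that proof occupies all of Appendix~\ref{section.proof main3}, resting on Lemma~\ref{lemma.chains} and Lemma~\ref{lemma.y}. The other relation, $f_i(e_i(\mathbf{T}))=\mathbf{T}$ whenever $e_i(\mathbf{T})\neq\mathbf{0}$, is the one your argument actually requires for the direction ``$e_i(\mathbf{T})\neq\mathbf{0}\Rightarrow\mathbf{T}\in\mathrm{Im}(f_i)$,'' and it is nowhere established in the paper; proving it would require a symmetric analogue of the entire Appendix~\ref{section.proof main3} analysis (locating the bold $j$ after applying $e_i$, tracking the ribbon from head to tail, handling the conjugation and the diagonal boundary cases). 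You correctly identify this as the main obstacle, but as written it is a genuine gap rather than a routine verification: without it your proof establishes only one implication of the proposition. If you want the short proof, note that both directions drop out at once from the counting statement in Lemma~\ref{lemma.main}, with no need for any inverse relations at all.
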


\begin{proof}
According to Lemma~\ref{lemma.main}, the number of unbracketed letters $i$ in $\mathbf{b}$ is equal to the number of 
unbracketed letters $i$ in $\mathrm{rw}(\mathbf{T})$. Since the total number of both letters $i$ and $j=i+1$ is the same 
in $\mathbf{b}$ and in $\mathrm{rw}(\mathbf{T})$, that also means that the number of unbracketed letters $j$ in 
$\mathbf{b}$ is equal to the number of unbracketed letters $j$ in $\mathrm{rw}(\mathbf{T})$.
Thus, there are no unbracketed letters $j$ in $\mathbf{b}$ if and only if there are no unbracketed letters $j$ in $\mathbf{T}$.
\end{proof}

\begin{theorem}
\label{theorem.main3}
	Given a primed tableau $\mathbf{T}$ with $f_i(\mathbf{T}) \neq \mathbf{0}$, for the operators $e_i$ defined 
	above we have the following relation:
	\begin{equation*}
	e_i(f_i(\mathbf{T})) = \mathbf{T}.
	\end{equation*}
\end{theorem}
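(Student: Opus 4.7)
The plan is to prove $e_i(f_i(\mathbf{T})) = \mathbf{T}$ by case analysis paralleling the four cases of the definition of $f_i$: Case~(1), (2a), (2b), and (2c). In each case, I will argue that the bold letter $i+1$ targeted by $e_i$ on $f_i(\mathbf{T})$ sits precisely at the cell that $f_i$ modified (or at its conjugate image, when the new entry is primed), and that the correspondingly numbered case of $e_i$ then reverses the change.

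The first key step is a bracketing transfer lemma. If $x$ is the cell of the bold $i$ for $f_i$ on $\mathbf{T}$, then after the change the letter at $x$, or at $x_E$ in Case~(1), becomes the leftmost unbracketed $i+1$ in $\mathrm{rw}(f_i(\mathbf{T}))$. The argument proceeds by checking that the change either leaves the letter in the unprimed part of the reading word (Cases~(1) and~(2a), producing an unprimed $i+1$) or moves it into the primed part (Cases~(2b) and~(2c), producing an $(i+1)'$); in either situation, one verifies using the bracketing rule and the prior unbracketed status of $x$ in $\mathrm{rw}(\mathbf{T})$ that no other unbracketed $i+1$ lies to the left of the modified entry in $\mathrm{rw}(f_i(\mathbf{T}))$.

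With the target cell located, each case follows by a local check. In Case~(1), $e_i$ directly swaps $((i+1)',(i+1))$ at $(x,x_E)$ back to $(i,(i+1)')$. In Case~(2a), column-strictness forces $c(x_N)<i$, and the rightmost-unbracketed status of $x$ together with the failure of Case~(1) of $f_i$ constrain $c(x_E)$ enough that the $e_i$-ribbon based at $x$ in $f_i(\mathbf{T})$ is again the singleton $\{x\}$, so Case~(2a) of $e_i$ decreases the entry by a whole unit. In Cases~(2b) and~(2c), the new bold $i+1$ is primed at $x$, so we conjugate. The plan is to show that the $f_i$-ribbon of $\mathbf{T}$ extending south and west from $x$ corresponds, via conjugation, to the $e_i$-ribbon of $f_i(\mathbf{T})^*$ extending north and east from $x^*$, with interior entries and the tail condition matching up: in Case~(2b), $x_H$ on the main diagonal of $\mathbf{T}$ becomes $(x_H)^*$ on the main diagonal of $\mathbf{T}^*$, triggering Case~(2b) of $e_i$; in Case~(2c), $c(x_H)=(i+1)'$ in $\mathbf{T}$ was changed to $(i+1)$ by $f_i$, and its conjugate becomes $(i+1)'$ in $f_i(\mathbf{T})^*$, triggering Case~(2c) of $e_i$.

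The main obstacle is the Case~(2a) ribbon check and, in parallel, confirming that no unintended extension of the $e_i$-ribbon arises past $(x_H)^*$ in Cases~(2b) and~(2c). Both issues hinge on the interplay between the semistandard row/column conditions and the bracketing rule for $\mathrm{rw}(\mathbf{T})$. These should be resolvable by the same type of careful bookkeeping developed for the proof of Theorem~\ref{theorem.main2} in Appendix~\ref{section.proof main2}, from which one can extract the needed constraints on the letters adjacent to $x$ and $x_H$.
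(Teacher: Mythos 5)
Your identification of the target cell of $e_i$ is correct in Cases~(1), (2a) and (2b), and the local reversals you describe there do go through (in Case~(2a) one also checks, as you indicate, that the $e_i$-ribbon at $x$ is a singleton because $c(x_E)\geqslant i+1$ unprimed and the column condition excludes an $i$ or $(i+1)'$ above $x$). But your ``bracketing transfer lemma'' is false in Case~(2c), and this breaks your treatment of that case. There $f_i$ changes $c(x)$ to $(i+1)'$ and $c(x_H)$ from $(i+1)'$ to $(i+1)$; the $(i+1)'$ now sitting at $x$ contributes a letter to the \emph{primed} part of $\mathrm{rw}(f_i(\mathbf{T}))$ that is obtained by sliding the formerly bracketed $j$ of $x_H$ to the left, so it stays bracketed. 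The leftmost unbracketed $i+1$ is the new \emph{unprimed} $i+1$ in cell $x_H$, so $e_i$ acts at $y=x_H$ in $f_i(\mathbf{T})$ itself, with no conjugation, and its ribbon runs north--east from $x_H$ back to the tail $y_T=x$. (Check this on $\mathbf{T}=\young(111\boldtwo3,:22\threep,::3\fourp)$: after $f_2$ the $3'$ in the first row is bracketed and the bold $3$ sits in the second row.) Applying $e_i$ at the conjugate of $x$, as you propose, would act on a bracketed letter and, via Case~(2a) on the conjugate, would return $i'$ rather than $i$ to the cell $x$.

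The second gap is your claim that the $f_i$-ribbon ``corresponds, via conjugation, to the $e_i$-ribbon \ldots\ with interior entries matching up.'' Under conjugation an interior entry $(i+1)'$ becomes $(i+1)$ and an interior entry $(i+1)$ becomes $(i+2)'$, neither of which is an admissible interior entry ($i$ or $(i+1)'$) of an $e_i$-ribbon; likewise in the unconjugated Case~(2c) the interior $(i+1)$'s of the $f_i$-ribbon are not admissible for the reverse $e_i$-ribbon. The correct statement, which is the content of Lemma~\ref{lemma.chains} in the paper and is the real crux of Cases~(2b) and (2c), is that the bracketing rule forces a \emph{companion chain}: every cell of the $f_i$-ribbon containing $(i+1)$ (resp.\ $(i+1)'$) has an $i$ (resp.\ $i'$) in the cell diagonally North-West of it, and it is this shifted chain of $i$'s and $i'$'s (together with the cells $x$ and $x_H$) that constitutes the $e_i$-ribbon and forces Case~(2b) or (2c) of $e_i$ to fire with the correct head and tail. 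Without this lemma you cannot rule out that the $e_i$-ribbon terminates early or wanders elsewhere, so the ``careful bookkeeping'' you defer to is precisely the missing argument rather than a routine verification.
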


The proof of Theorem~\ref{theorem.main3} is relegated to Appendix~\ref{section.proof main3}.

\begin{corollary}
\label{theorem.main4}
	 For any $\mathbf{b} \in \mathcal{B}^h$ with $\mathrm{HM}(\mathbf{b}) = (\mathbf{T},\mathbf{Q})$, the operator 
	 $e_i$ defined above satisfies
	\begin{equation*}
	\mathrm{HM}(e_i(\mathbf{b})) = (e_i(\mathbf{T}), \mathbf{Q}),
	\end{equation*}
	given the left-hand side is well-defined.
\end{corollary}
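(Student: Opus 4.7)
The plan is to deduce the corollary quickly from the three main ingredients: Theorem~\ref{theorem.main0} (the recording tableau is a connected-component invariant), Theorem~\ref{theorem.main2} (compatibility of $f_i$ on words and on primed tableaux via $P_{\mathrm{HM}}$), and Theorem~\ref{theorem.main3} (the explicit $e_i$ is a left inverse of $f_i$ on primed tableaux). The strategy is to express $\mathbf{b}$ as $f_i(\mathbf{b}')$ where $\mathbf{b}' = e_i(\mathbf{b})$ and then invert.

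First I would set $\mathbf{b}' := e_i(\mathbf{b})$, which is well-defined by assumption, and let $(\mathbf{T}', \mathbf{Q}') := \mathrm{HM}(\mathbf{b}')$. Since $\mathbf{b}$ and $\mathbf{b}'$ lie in the same connected component of $\mathcal{B}^h$, Theorem~\ref{theorem.main0} gives $\mathbf{Q}' = R_{\mathrm{HM}}(\mathbf{b}') = R_{\mathrm{HM}}(\mathbf{b}) = \mathbf{Q}$. It remains to identify $\mathbf{T}'$ with $e_i(\mathbf{T})$.

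To do so, I use that on the type $A$ word crystal $e_i$ and $f_i$ are mutually inverse when defined; in particular $f_i(\mathbf{b}') = f_i(e_i(\mathbf{b})) = \mathbf{b}$, so $f_i(\mathbf{b}') \neq \mathbf{0}$. Applying Theorem~\ref{theorem.main2} to $\mathbf{b}'$ yields
\begin{equation*}
\mathbf{T} = P_{\mathrm{HM}}(\mathbf{b}) = P_{\mathrm{HM}}(f_i(\mathbf{b}')) = f_i(P_{\mathrm{HM}}(\mathbf{b}')) = f_i(\mathbf{T}').
\end{equation*}
In particular $f_i(\mathbf{T}') \neq \mathbf{0}$, so Theorem~\ref{theorem.main3} applies and gives $e_i(f_i(\mathbf{T}')) = \mathbf{T}'$; substituting the identity above produces $\mathbf{T}' = e_i(\mathbf{T})$. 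Combining this with $\mathbf{Q}' = \mathbf{Q}$ yields $\mathrm{HM}(e_i(\mathbf{b})) = (e_i(\mathbf{T}), \mathbf{Q})$, as required.

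I do not anticipate a real obstacle: every nontrivial step has already been carried out in Theorems~\ref{theorem.main0}, \ref{theorem.main2}, and~\ref{theorem.main3} (whose proofs are deferred to the appendices). The only point that needs a brief verification is that $f_i(\mathbf{T}') \neq \mathbf{0}$ so that Theorem~\ref{theorem.main3} can be invoked, and this follows from the equality $f_i(\mathbf{T}') = \mathbf{T}$ derived above together with the fact that $\mathbf{T}$ is a genuine primed tableau (not the zero symbol). So the proof is essentially a three-line chain of identities using the already-proved results.
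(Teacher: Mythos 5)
Your proposal is correct and is exactly the intended derivation: the paper states this as an immediate corollary of Theorems~\ref{theorem.main0}, \ref{theorem.main2}, and~\ref{theorem.main3} without writing out the chain, and your argument (set $\mathbf{b}'=e_i(\mathbf{b})$, get $\mathbf{Q}'=\mathbf{Q}$ from Theorem~\ref{theorem.main0}, get $\mathbf{T}=f_i(\mathbf{T}')$ from Theorem~\ref{theorem.main2}, then invert with Theorem~\ref{theorem.main3}) fills it in correctly, including the needed check that $f_i(\mathbf{T}')\neq\mathbf{0}$.
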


The consequence of Theorem~\ref{theorem.main2}, as discussed in Section~\ref{section.implicit}, is a crystal 
isomorphism $\Psi_\lambda \colon \mathcal{PT}(\lambda) \rightarrow \bigoplus \mathcal{B}_{\mu}^{\oplus h_{\lambda\mu}}$. 
Now, to determine the nonnegative integer coefficients $h_{\lambda\mu}$, it is enough to count the highest weight elements
in $\mathcal{PT}(\lambda)$ of given weight $\mu$. 

\begin{proposition}
\label{proposition.highest}
	A primed tableau $\mathbf{T} \in \mathcal{PT}(\lambda)$ is a highest weight element if and only if its reading word 	
	$\mathrm{rw}(\mathbf{T})$ is a Yamanouchi word. That is, for any suffix of $\mathrm{rw}(\mathbf{T})$, its weight 
	is a partition.
\end{proposition}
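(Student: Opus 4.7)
The plan is to reduce the statement to the classical Yamanouchi characterization of highest weight elements in the word crystal, by exploiting the fact that the explicit raising operators $e_i$ on $\mathcal{PT}(\lambda)$ were defined through the bracketing rule applied to the reading word $\mathrm{rw}(\mathbf{T})$.

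First I would unpack the definition: $\mathbf{T}$ is highest weight if and only if $e_i(\mathbf{T}) = \mathbf{0}$ for every $i \geqslant 1$. From the recipe for $e_i$ in Section~\ref{section.explicit}, $e_i(\mathbf{T})$ is nonzero precisely when there exists an unbracketed occurrence of the letter $i+1$ in $\mathrm{rw}(\mathbf{T})$ (regardless of whether it came from an unprimed or primed cell of $\mathbf{T}$, since primes are absorbed into the letter value in the reading word) after successively pairing adjacent $(i+1)\,i$ patterns in the $\{i,i+1\}$-subword. Hence $\mathbf{T}$ is highest weight exactly when, for every $i$, every occurrence of $i+1$ in $\mathrm{rw}(\mathbf{T})$ is bracketed.

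The remaining task is the classical fact that for any word $w$ in the positive integer alphabet, $w$ is Yamanouchi if and only if for every $i$ all letters $i+1$ are bracketed in the $\{i,i+1\}$-subword of $w$. This is exactly the characterization used for highest weight elements in $\mathcal{B}_n^h$ cited earlier in the paper. I would give the short self-contained argument by the standard suffix-counting: if some $(i+1)$ is unbracketed, then the suffix of the $\{i,i+1\}$-subword starting at the leftmost such $(i+1)$ has strictly more $(i+1)$'s than $i$'s, so the corresponding suffix of $w$ fails the partition condition; conversely, if every $(i+1)$ is bracketed, then each $(i+1)$ in any suffix is matched with an $i$ strictly to its right (hence in the same suffix), so the number of $i$'s dominates the number of $(i+1)$'s in every suffix and every $i$, which is the Yamanouchi condition. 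Chaining the two equivalences proves the proposition.

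There is no real obstacle here, as the nontrivial content has already been packed into the definition of the explicit $e_i$ operators via the reading word bracketing and into the classical Yamanouchi criterion for $\mathcal{B}_n^h$; this proposition is essentially a direct translation between these two viewpoints. The only minor point worth flagging is that one must verify the equivalence holds uniformly whether the would-be bold letter is primed or unprimed, which is automatic because the bracketing is performed on $\mathrm{rw}(\mathbf{T})$ and does not see the primed/unprimed distinction that only governs the subsequent conjugation step when $e_i$ acts nontrivially.
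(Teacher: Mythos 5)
Your proof is correct and follows the route the paper intends: the paper states this proposition without a separate proof precisely because, once the explicit operators $e_i$ are defined via the bracketing rule on $\mathrm{rw}(\mathbf{T})$ (and shown, via Lemma~\ref{lemma.main} and Theorem~\ref{theorem.main2}, to be the genuine crystal operators), the statement reduces to the classical equivalence between ``all letters $i+1$ bracketed for every $i$'' and the Yamanouchi condition, exactly as you argue. Your suffix-counting verification of that classical equivalence is a correct filling-in of the one step the paper leaves implicit.
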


Thus we define $h_{\lambda\mu}$ to be the number of primed tableaux $\mathbf{T}$ of shifted shape $\mathcal{S}(\lambda)$ 
and weight $\mu$ such that $\mathrm{rw}(\mathbf{T})$ is Yamanouchi.

\begin{example}
	Let $\lambda = (5,3,2)$ and $\mu = (4,3,2,1)$. There are three primed tableaux of shifted shape 
	$\mathcal{S}((5,3,2))$ and weight $(4,3,2,1)$ with a Yamanouchi reading word, namely
	\begin{equation*}
		\young(1111\twop,:22\threep,::3\fourp)
		\ , \quad
		\young(1111\threep,:222,::3\fourp)
		\quad \text{and} \quad
		\young(1111\fourp,:222,::33)\ .
	\end{equation*}
	Therefore $h_{(5,3,2)(4,3,2,1)} = 3$.
\end{example}

We summarize our results for the type $C$ Stanley symmetric functions as follows.
\begin{corollary}
\label{corollary.main2}
	The expansion of $F^C_w(\mathbf{x})$ in terms of Schur symmetric functions is
	\begin{equation}
	\label{equation.FC}
	F^C_w(\mathbf{x}) = \sum_\lambda g_{w\lambda} s_\lambda (\mathbf{x}), \quad
	\text{where} \quad
	g_{w\lambda} = \sum_\mu 2^{\ell(\mu)} \big|\mathcal{UT}_w (\mu) \big| \ h_{\mu\lambda}\ .
	\end{equation}
\end{corollary}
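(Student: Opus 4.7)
The plan is to assemble the expansion by plugging the crystal decomposition of $\mathcal{PT}(\lambda)$ into the formula already obtained in equation~\eqref{equation.PT}. The starting point is
\[
F^C_w(\mathbf{x}) = \sum_{\lambda} 2^{\ell(\lambda)} \bigl|\mathcal{UT}_w(\lambda)\bigr|\, \chi_{\mathcal{PT}(\lambda)}(\mathbf{x}),
\]
where $\chi_{\mathcal{PT}(\lambda)}(\mathbf{x}) = \sum_{\mathbf{T}\in\mathcal{PT}(\lambda)} \mathbf{x}^{\wt(\mathbf{T})}$. So the task reduces to expressing $\chi_{\mathcal{PT}(\lambda)}$ as a nonnegative integer combination of Schur functions.

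First I would invoke Theorem~\ref{theorem.main2} (together with Theorem~\ref{theorem.main3} and the discussion surrounding~\eqref{equation.ef}): the explicit operators $e_i, f_i$ on $\mathcal{PT}(\lambda)$ realize the crystal structure pulled back from $\mathcal{B}^h$ via $\Psi_\lambda$, and this structure is independent of $\mathbf{Q}_\lambda$. Hence $\mathcal{PT}(\lambda)$ decomposes as a disjoint union of type $A$ connected components, i.e.\ $\mathcal{PT}(\lambda) \cong \bigoplus_\mu \mathcal{B}_\mu^{\oplus h_{\lambda\mu}}$, with $h_{\lambda\mu}$ equal to the number of highest weight elements of $\mathcal{PT}(\lambda)$ of weight $\mu$. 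Applying the character theorem for $\mathcal{B}_\mu$ gives
\[
\chi_{\mathcal{PT}(\lambda)}(\mathbf{x}) = \sum_\mu h_{\lambda\mu}\, s_\mu(\mathbf{x}).
\]
Next I would identify $h_{\lambda\mu}$ with the combinatorial quantity in the corollary by appealing to Proposition~\ref{proposition.highest}: the highest weight primed tableaux of shape $\lambda$ are precisely those whose reading word is Yamanouchi, so $h_{\lambda\mu}$ counts exactly such tableaux of weight $\mu$, matching the definition used in~\eqref{equation.FC}.

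Finally, substituting into~\eqref{equation.PT} and swapping the order of summation yields
\[
F^C_w(\mathbf{x}) = \sum_\lambda 2^{\ell(\lambda)}\bigl|\mathcal{UT}_w(\lambda)\bigr| \sum_\mu h_{\lambda\mu}\, s_\mu(\mathbf{x})
= \sum_\mu \Bigl(\sum_\lambda 2^{\ell(\lambda)}\bigl|\mathcal{UT}_w(\lambda)\bigr|\, h_{\lambda\mu}\Bigr) s_\mu(\mathbf{x}),
\]
and a relabeling of the outer and inner summation indices ($\lambda \leftrightarrow \mu$) produces the claimed formula for $g_{w\lambda}$. The argument is essentially a bookkeeping exercise: the real work has already been done, with the crystal isomorphism in Theorem~\ref{theorem.main2} and the Kra\'skiewicz bijection of Theorem~\ref{theorem.KR} doing the heavy lifting. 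The only mild subtlety to check is that the two notions of $h$-coefficient (the multiplicities $h_{\lambda\mu}$ coming from the abstract crystal decomposition versus the counting formula in terms of Yamanouchi tableaux) indeed coincide; this is where Proposition~\ref{proposition.highest} is essential, and it is the one spot where one must ensure the explicit reading word characterization of highest weight elements is available in the shifted primed setting rather than just the straight type $A$ one.
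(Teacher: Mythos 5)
Your proposal is correct and follows exactly the route the paper takes: start from~\eqref{equation.PT}, use the crystal isomorphism $\mathcal{PT}(\lambda) \cong \bigoplus_\mu \mathcal{B}_\mu^{\oplus h_{\lambda\mu}}$ (justified by Theorems~\ref{theorem.main0} and~\ref{theorem.main2}) together with the character theorem to write $\chi_{\mathcal{PT}(\lambda)} = \sum_\mu h_{\lambda\mu} s_\mu$, identify $h_{\lambda\mu}$ via Proposition~\ref{proposition.highest}, and swap the order of summation. This matches the computation displayed at the end of Section~\ref{section.implicit}, so there is nothing to add.
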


Replacing $\ell(\mu)$ by $\ell(\mu)-o(w)$ gives the Schur expansion of $F^B_w(\mathbf{x})$.  Note that since any
row of a unimodal tableau contains at most one zero, $\ell(\mu)-o(w)$ is nonnegative. Thus the given expansion 
makes sense combinatorially.

\begin{example}\label{exa}
Consider the word $w=0101=1010$. There is only one unimodal tableau corresponding to $w$, namely 
$\mathbf{P} = \young(101,:0)$, which belongs to $\mathcal{UT}_{0101} (3,1)$. Thus, $g_{w\lambda} = 4h_{(3,1)\lambda}$. 
There are only three possible highest weight primed tableaux of shape $(3,1)$, namely $\young(111,:2),\ \young(11\twop,:2)$
and $\young(11\threep,:2)$, which implies that $h_{(3,1)(3,1)}= h_{(3,1)(2,2)} = h_{(3,1)(2,1,1)} = 1$ and 
$h_{(3,1)\lambda} = 0$ for other weights $\lambda$. The expansion of $F^C_{0101}(\mathbf{x})$ is thus
\begin{equation*}
	F^C_{0101} = 4s_{(3,1)} + 4s_{(2,2)} + 4s_{(2,1,1)}.
\end{equation*}
\end{example}

\begin{remark}
\label{remark.doubling}
In~\cite[Section 5]{Haiman.1989}, Haiman showed  that shifted mixed insertion can be understood in terms of 
nonshifted mixed insertion operators that produce a symmetric tableau, which can subsequently be cut along the
diagonal. More precisely, starting with a word $\mathbf{b}$, consider its doubling $\mathrm{double}(\mathbf{b})$ by 
replacing each letter $\ell$ by $-\ell \;\ell$. By~\cite[Proposition 6.8]{Haiman.1989} the mixed insertion of 
$\mathrm{double}(\mathbf{b})$ is the symmetrized version of $P_{\mathrm{HM}}(\mathbf{b})$. This symmetrized version 
can also be obtained by first applying usual insertion to obtain $P(\mathrm{double}(\mathbf{b}))$ and then applying 
conversion~\cite[Proposition 14]{SW.2001}. Since both doubling (where the operators are also replaced by their doubled 
versions) and regular insertion commute with crystal operators, it follows that our crystal operators $f_i$ on primed tableaux
can be described as follows:  To apply $f_i$ to $\mathbf{T}$, first form the symmetrization of $\mathbf{T}$ and then apply 
inverse conversion (changing primed entries to negatives).  Next apply the doubled operator $f_if_{-i}$,  and then 
convert ``forwards"  (negatives to primes).  This produces a symmetric tableau, which can then be cut along the diagonal 
to obtain $f_i(\mathbf{T})$.
\end{remark}

\section{Semistandard unimodal tableaux}
\label{section.semistandard}

Many of the results of this paper have counterparts which involve the notion of semi\-standard unimodal tableaux in 
place of primed tableaux.  We give a brief overview of these results, mostly without proof.  

First, let us define semistandard unimodal tableaux. We say that a word $a_1 a_2 \ldots a_h \in \mathcal{B}^h$ 
is \defn{weakly unimodal} if there exists an index $v$, such that 
\[
	a_1 > a_2 > \cdots > a_v \leqslant a_{v+1} \leqslant \cdots \leqslant a_h.
\]  
A \defn{semistandard unimodal tableau} $\mathbf{P}$ of shape $\lambda$ is a filling of $\mathcal{S}(\lambda)$ with 
letters from the alphabet $X$ such that the $i^{th}$ row of $\mathbf{P}$, denoted by $P_i$, is weakly unimodal, and such that 
$P_i$ is the longest weakly unimodal subword in the concatenated word $P_{i+1} P_i$.  Denote the set of 
semistandard unimodal tableaux of shape $\lambda$ by $\mathcal{SUT}(\lambda)$.

Let $\mathbf{a}=a_1\ldots a_h \in \mathcal{B}^h$. The alphabet $X$ imposes a partial order on the entries of $\mathbf{a}$.
We can extend this to a total order by declaring that if $a_i=a_j$ as elements of $X$, and $i<j$, then as entries of 
$\mathbf{a}$, $a_i<a_j$. For each entry $a_i$, denote its numerical position in the total ordering on the entries of 
$\mathbf{a}$ by $n_i$ and define the \defn{standardization} of $\mathbf{a}$ to be the word with superscripts, 
$n_1^{a_1} \ldots n_h^{a_h}$. Since its entries are distinct, $n_1 \ldots n_h$ can be considered as a reduced word. 
Let $(\mathbf{R},\mathbf{S})$ be the Kra\'skiewicz insertion and recording tableaux of $n_1 \ldots n_h$, and let 
$\mathbf{R}^*$ be the tableau obtained from $\mathbf{R}$ by replacing each $n_i$ by $a_i$. One checks that setting 
$\mathrm{SK}(\mathbf{a})=(\mathbf{R}^*,\mathbf{S})$ defines a map,
\[
	\mathrm{SK} \colon  \mathcal{B}=\bigoplus_{h \in \mathbb{N}} \mathcal{B}^h \rightarrow \bigcup_{\lambda} 
	\big[\mathcal{SUT} (\lambda) \times \mathcal{ST} (\lambda)\big].
\]
In fact, this map is a bijection \cite{Serrano.2010,Lam.1995}. It follows that the composition 
$\mathrm{SK} \circ \mathrm{HM}^{-1}$ gives a bijection
\[
	\bigcup_{\lambda} \big[\mathcal{PT} (\lambda) \times \mathcal{ST} (\lambda)\big] \rightarrow \bigcup_{\lambda} 
	\big[\mathcal{SUT} (\lambda) \times \mathcal{ST} (\lambda)\big].
\]
The following remarkable fact, which appears as \cite[Proposition 2.23]{Serrano.2010}, can be deduced
from \cite[Theorem 3.32]{Lam.1995}, which itself utilizes results of \cite{Haiman.1989}.

\begin{theorem}
\label{theorem.same}
	For any word $\mathbf{a}\in \mathcal{B}^h$, $Q_{\mathrm{SK}}(\mathbf{a}) = Q_{\mathrm{HM}}(\mathbf{a})$.
\end{theorem}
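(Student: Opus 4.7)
The plan is to prove the equality $Q_{\mathrm{SK}}(\mathbf{a}) = Q_{\mathrm{HM}}(\mathbf{a})$ by reducing it to a shape-matching claim at every step of the two insertions, and then invoking the correspondence between Kra\'skiewicz insertion and Haiman's mixed insertion on words with distinct letters. Both $Q_{\mathrm{SK}}(\mathbf{a})$ and $Q_{\mathrm{HM}}(\mathbf{a})$ are standard shifted tableaux obtained by labeling the cell created at step $s$ with $s$. Consequently, they coincide as soon as the intermediate insertion tableaux have the same shape at every step, so it suffices to prove by induction on $s$ that the SK insertion tableau of $a_1 \ldots a_s$ and the HM insertion tableau of $a_1 \ldots a_s$ have the same shape.

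By the definition of $\mathrm{SK}$, the shape of the SK insertion tableau of $a_1 \ldots a_s$ equals the shape of $P_{\mathrm{KR}}(n_1 \ldots n_s)$, where $n_1 \ldots n_h = \mathrm{std}(\mathbf{a})$. To relate this to $\mathrm{HM}$, I would first prove a standardization-invariance statement for mixed insertion, namely $Q_{\mathrm{HM}}(\mathbf{a}) = Q_{\mathrm{HM}}(\mathrm{std}(\mathbf{a}))$. This is obtained by a direct induction on the insertion steps, using that the tie-breaking convention for $\mathrm{HM}$ (bump the leftmost element \emph{strictly} greater in the alphabet $X'$) agrees with the total order employed in standardization, so the newly added cell lands in the same position in both cases. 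This reduces the task to the situation of a word with distinct letters.

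Having reduced to $\mathbf{n} = n_1 \ldots n_h$ with distinct letters, one must show that $P_{\mathrm{HM}}(n_1 \ldots n_s)$ and $P_{\mathrm{KR}}(n_1 \ldots n_s)$ have the same shape at every prefix. For this I would combine Theorem~\ref{theorem.insertion dual}, which identifies $R_{\mathrm{HM}}(\mathbf{n})$ with the Worley--Sagan $P$-tableau of $\mathbf{n}^{-1}$, with the results of~\cite[Sections 5--6]{Haiman.1989} matching Worley--Sagan insertion to Kra\'skiewicz insertion on distinct-letter words (up to the natural $P \leftrightarrow Q$ swap between dual insertion algorithms). The careful extraction of this compatibility across all prefixes, and in particular the shape identity of the recording tableaux at intermediate steps, is precisely what is carried out in~\cite[Theorem 3.32]{Lam.1995}, which yields the required shape equality and completes the induction.

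The main obstacle is the last step: matching Worley--Sagan insertion with Kra\'skiewicz insertion on words with distinct letters requires a nontrivial case analysis of the diagonal bumping and the commutation-like rules governing Kra\'skiewicz's algorithm. Rather than redoing this verification by hand, I would invoke~\cite{Haiman.1989} and~\cite{Lam.1995} directly and combine them with the standardization-invariance argument above to conclude. The standardization step is routine; the shape-matching reduction is the conceptual simplification; and the appeal to Lam's theorem is what supplies the nontrivial combinatorial identity needed.
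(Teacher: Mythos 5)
The paper does not actually prove this statement---it is quoted from \cite[Proposition~2.23]{Serrano.2010}, which in turn rests on \cite[Theorem~3.32]{Lam.1995}---and your proposal is essentially the deduction those references carry out: reduce to equality of shapes at every prefix, pass to the standardization (checking that both recording tableaux are standardization-invariant), and invoke Lam's theorem for the distinct-letter case. One small correction: the comparison of Worley--Sagan insertion with Kra\'skiewicz insertion is not found in \cite{Haiman.1989} (which does not treat Kra\'skiewicz insertion), but is precisely the content of \cite[Theorem~3.32]{Lam.1995}, which you cite anyway, so the argument stands as a faithful unpacking of the citation the paper relies on.
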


This allows us to define a bijective map $\Phi_{\mathbf{Q}} \colon \mathcal{PT} (\lambda) \rightarrow \mathcal{SUT} (\lambda)$ 
as follows. Choose a standard shifted tableau $\mathbf{Q}$ of shape $\lambda$. Then, given a primed tableau 
$\mathbf{P}$ of shape $\lambda$ set $(\mathbf{R}, \mathbf{Q}) = \mathrm{SK}(\mathrm{HM}^{-1}(\mathbf{P},\mathbf{Q}))$,
and let $\Phi_{\mathbf{Q}}(\mathbf{P})=\mathbf{R}$.

For any filling of a shifted shape $\lambda$ with letters from $X$, associating this filling to its reading word (the element of 
$\mathcal{B}^{|\lambda|}$ obtained by reading rows left to right, bottom to top) induces crystal operators on the set of all 
fillings of this shape. In particular, we can apply these induced operators to any element of $\mathcal{SUT} (\lambda)$ 
(although, a priori, it is not clear that the image will remain in $\mathcal{SUT} (\lambda)$). We now summarize our main 
results for SK insertion and its relation to this induced crystal structure.

\begin{theorem}
\label{theorem.main2'}
	 For any $\mathbf{b} \in \mathcal{B}^h$ with $\mathrm{SK}(\mathbf{b}) = (\mathbf{T},\mathbf{Q})$ and 
	 $f_i(\mathbf{b})\neq \mathbf{0}$, the induced operator $f_i$ described above satisfies
	\begin{equation*}
		\mathrm{SK}(f_i(\mathbf{b})) = (f_i(\mathbf{T}), \mathbf{Q}).
	\end{equation*}
	Also, $f_i(\mathbf{b}) = \mathbf{0}$ if and only if $f_i(\mathbf{T})=\mathbf{0}$.
\end{theorem}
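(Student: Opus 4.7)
The plan is to reduce Theorem~\ref{theorem.main2'} to the already-proven Theorem~\ref{theorem.main2} for Haiman's mixed insertion, using Theorem~\ref{theorem.same} as the bridge. Write $\mathrm{SK}(\mathbf{b}) = (\mathbf{T},\mathbf{Q})$ and $\mathrm{HM}(\mathbf{b}) = (\mathbf{P},\mathbf{Q})$; Theorem~\ref{theorem.same} guarantees that the recording tableau $\mathbf{Q}$ is common to both insertions. Setting $\mathbf{b}' = f_i(\mathbf{b})$ and assuming it is nonzero, Theorem~\ref{theorem.main2} gives $\mathrm{HM}(\mathbf{b}') = (f_i(\mathbf{P}),\mathbf{Q})$, and Theorem~\ref{theorem.same} applied to $\mathbf{b}'$ yields $Q_{\mathrm{SK}}(\mathbf{b}') = \mathbf{Q}$. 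Hence $\mathrm{SK}(\mathbf{b}') = (\mathbf{T}',\mathbf{Q})$ for a unique $\mathbf{T}' \in \mathcal{SUT}(\lambda)$, and by the definition of $\Phi_{\mathbf{Q}}$ we have $\mathbf{T}' = \Phi_{\mathbf{Q}}(f_i(\mathbf{P}))$. The annihilation assertion follows from the corresponding assertion in Theorem~\ref{theorem.main2} together with the bijectivity of $\Phi_{\mathbf{Q}}$.

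This produces a well-defined operator $\tilde{f}_i := \Phi_{\mathbf{Q}} \circ f_i \circ \Phi_{\mathbf{Q}}^{-1}$ on $\mathcal{SUT}(\lambda)$ satisfying $\mathrm{SK}(f_i(\mathbf{b})) = (\tilde{f}_i(\mathbf{T}), \mathbf{Q})$. The remaining and central task is to identify $\tilde{f}_i$ with the reading-word-induced operator $f_i$ on $\mathcal{SUT}(\lambda)$ used in the statement, and implicitly to verify that applying $f_i$ to $\mathrm{rw}(\mathbf{T})$ and refilling the boxes of $\mathbf{T}$ produces a valid SUT. My approach is to use the explicit $f_i$ rules on primed tableaux from Section~\ref{section.explicit}. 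Under $\Phi_{\mathbf{Q}}$, a primed entry $(i+1)'$ in $\mathbf{P}$ corresponds to a specific $i$ in the ascending portion of a row of $\mathbf{T}$, while a primed entry $i'$ corresponds to a descending-part $i$. Because the explicit primed-tableau rules change precisely one cell of $\mathbf{P}$ (from $i$ or $i'$ to $i+1$ or $(i+1)'$), the image $\tilde{f}_i(\mathbf{T})$ differs from $\mathbf{T}$ by changing a single entry from $i$ to $i+1$.

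To pin down that cell, one compares the bracketing on the primed reading word of $\mathbf{P}$ (primed letters listed column-by-column, then unprimed letters row-by-row) with the bracketing on $\mathrm{rw}(\mathbf{T})$ (rows from the bottom, left to right). A case analysis matching Cases~(1) and (2a)--(2c) of the primed $f_i$ to blocks of $i$'s and $(i+1)$'s in $\mathbf{T}$ shows that the rightmost unbracketed $i$ in $\mathrm{rw}(\mathbf{T})$ sits in the cell where $\tilde{f}_i$ acts. The resulting single-entry change preserves weak unimodality of the affected row and the maximal-weakly-unimodal-subword condition, so the output is a valid SUT. Combined with the reduction above, this yields $\mathrm{SK}(f_i(\mathbf{b})) = (f_i(\mathbf{T}), \mathbf{Q})$.

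The main obstacle is precisely this translation between the two reading-word conventions. The primed-tableau reading word is tailored to the HM bumping dynamics, whereas the SUT reading word reflects standardization and Kra\'skiewicz bumping. Verifying that the bracketings of $i$ and $i+1$ in the two conventions select the same cell amounts to a careful combinatorial correspondence between ribbons of $i$, $i'$, $(i+1)'$, $i+1$ in $\mathbf{P}$ and the corresponding blocks of $i$'s and $(i+1)$'s in $\mathbf{T}$. Once this block-level correspondence is made precise, Theorem~\ref{theorem.main2'} follows directly.
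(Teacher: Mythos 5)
Your opening reduction is sound: combining Theorem~\ref{theorem.same} with Theorem~\ref{theorem.main2} does show that $\mathrm{SK}(f_i(\mathbf{b}))=(\mathbf{T}',\mathbf{Q})$ with $\mathbf{T}'=\Phi_{\mathbf{Q}}(f_i(P_{\mathrm{HM}}(\mathbf{b})))$, i.e.\ that the conjugated operator $\tilde f_i=\Phi_{\mathbf{Q}}\circ f_i\circ\Phi_{\mathbf{Q}}^{-1}$ records what $\mathrm{SK}$ does to $f_i(\mathbf{b})$. (The paper itself states this theorem ``mostly without proof,'' so there is no written argument to compare against; this reduction is surely part of any proof.) The problem is that everything after that point --- which is the actual content of the theorem --- is asserted rather than proved. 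The statement to be shown is that $\tilde f_i$ coincides with the \emph{reading-word-induced} operator on $\mathcal{SUT}(\lambda)$, and your argument for this rests on two claims that are not justified. First, you claim $\tilde f_i(\mathbf{T})$ differs from $\mathbf{T}$ in a single cell ``because the explicit primed-tableau rules change precisely one cell of $\mathbf{P}$.'' This is a non sequitur: $\Phi_{\mathbf{Q}}$ is defined by un-inserting through $\mathrm{HM}$ and re-inserting through $\mathrm{SK}$, both global bumping procedures, so a local change in $\mathbf{P}$ gives no a priori control on how many cells of $\Phi_{\mathbf{Q}}(\mathbf{P})$ change. (Also, the explicit rules can change two cells of $\mathbf{P}$, not one.) What is true from weight considerations is only that the \emph{multiset} of entries of $\tilde f_i(\mathbf{T})$ is that of $\mathbf{T}$ with one $i$ replaced by an $i+1$; the positions could in principle be rearranged arbitrarily. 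Second, the asserted correspondence ``a primed entry $(i+1)'$ in $\mathbf{P}$ corresponds to a specific $i$ in the ascending portion of a row of $\mathbf{T}$'' is not a defined map --- there is no established cell-level or block-level dictionary between a primed tableau and its image under $\Phi_{\mathbf{Q}}$, and building one is essentially the whole difficulty.

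Concretely, what is missing is an analogue for $\mathrm{SK}$ insertion of Lemma~\ref{lemma.main}: a statement that tracks the rightmost unbracketed $i$ of $\mathbf{b}$ through the Kra\'skiewicz/standardization insertion and identifies it with the rightmost unbracketed $i$ of $\mathrm{rw}(\mathbf{T})$ (reading rows left to right, bottom to top), together with the preservation of the number of unbracketed $i$'s. That last point is also needed for your annihilation claim: Theorem~\ref{theorem.main2} and bijectivity of $\Phi_{\mathbf{Q}}$ only give $f_i(\mathbf{b})=\mathbf{0}$ iff $\tilde f_i(\mathbf{T})=\mathbf{0}$, whereas the theorem asserts vanishing of the induced operator, which is governed by the bracketing of $\mathrm{rw}(\mathbf{T})$. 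An alternative viable route would be to use the facts that $P_{\mathrm{SK}}(\mathrm{rw}(\mathbf{T}))=\mathbf{T}$ and that $\mathbf{b}$ and $\mathrm{rw}(\mathbf{T})$ are shifted-plactic equivalent, and then check compatibility of each elementary relation with the $\{i,i+1\}$ bracketing; but either way a substantive combinatorial argument, comparable in weight to Appendix~\ref{section.proof main2}, still has to be supplied. As it stands the proposal reduces the theorem to a claim essentially as hard as the theorem itself and then declares it verified.
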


\begin{corollary}
	$\mathcal{SUT} (\lambda)$ is closed under the induced crystal operators described above.
\end{corollary}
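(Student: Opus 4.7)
The plan is to deduce the corollary directly from Theorem~\ref{theorem.main2'}. Suppose $\mathbf{T} \in \mathcal{SUT}(\lambda)$ and $f_i(\mathbf{T}) \neq \mathbf{0}$; the goal is to show $f_i(\mathbf{T}) \in \mathcal{SUT}(\lambda)$. Since $\mathrm{SK}$ is a bijection onto $\bigcup_\mu [\mathcal{SUT}(\mu) \times \mathcal{ST}(\mu)]$, I would fix any $\mathbf{Q} \in \mathcal{ST}(\lambda)$ and set $\mathbf{b} = \mathrm{SK}^{-1}(\mathbf{T}, \mathbf{Q}) \in \mathcal{B}^{|\lambda|}$, so $\mathrm{SK}(\mathbf{b}) = (\mathbf{T}, \mathbf{Q})$. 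Theorem~\ref{theorem.main2'} then forces $f_i(\mathbf{b}) \neq \mathbf{0}$ and gives $\mathrm{SK}(f_i(\mathbf{b})) = (f_i(\mathbf{T}), \mathbf{Q})$. Since the first coordinate of any $\mathrm{SK}$-image is a semistandard unimodal tableau of the same shape as its recording tableau, and $\mathbf{Q}$ still has shape $\lambda$, we conclude $f_i(\mathbf{T}) \in \mathcal{SUT}(\lambda)$.

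For the raising operators $e_i$, I would observe that on the crystal of words $e_i$ is the partial inverse of $f_i$, and this relation passes through the reading-word identification to the induced operators on fillings. Combining this with Theorem~\ref{theorem.main2'} yields a symmetric statement for $e_i$: whenever $e_i(\mathbf{b}) \neq \mathbf{0}$, one has $\mathrm{SK}(e_i(\mathbf{b})) = (e_i(\mathbf{T}), \mathbf{Q})$. Repeating the argument above with $e_i$ in place of $f_i$ then gives closure of $\mathcal{SUT}(\lambda)$ under $e_i$ as well.

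The main obstacle does not lie in this corollary itself, which is essentially bookkeeping given Theorem~\ref{theorem.main2'} and bijectivity of $\mathrm{SK}$; the genuine content is in Theorem~\ref{theorem.main2'}, whose proof requires a careful analysis of how SK insertion interacts with the word-level crystal operators. Once that intertwining is established, membership of $f_i(\mathbf{T})$ in $\mathcal{SUT}(\lambda)$ is automatic: $f_i(\mathbf{T})$ is realized as the insertion tableau of a word under a bijection whose codomain already enforces the defining conditions of $\mathcal{SUT}(\lambda)$.
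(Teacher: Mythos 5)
Your proposal is correct and matches the paper's (implicit) argument: the corollary is stated immediately after Theorem~\ref{theorem.main2'} precisely because it follows from that theorem together with the fact that $\mathrm{SK}$ is a bijection onto $\bigcup_{\mu}\big[\mathcal{SUT}(\mu)\times\mathcal{ST}(\mu)\big]$, so $f_i(\mathbf{T})$ is forced to lie in $\mathcal{SUT}(\lambda)$. Your handling of $e_i$ via the partial-inverse relation (applying Theorem~\ref{theorem.main2'} to $\mathbf{b}'=e_i(\mathbf{b})$) is also the natural completion and is sound.
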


Replacing $\mathrm{HM}$ by $\mathrm{SK}$ in the second proof of Theorem~\ref{theorem.main0}, or by combining 
Theorem~\ref{theorem.main0}  with Theorem~\ref{theorem.same} yields:

\begin{theorem}
\label{theorem.main0'}
The recording tableau under $\mathrm{SK}$ insertion is constant on each connected component of the crystal 
$\mathcal{B}^h$.
\end{theorem}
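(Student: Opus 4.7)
The plan is to obtain this theorem essentially for free by combining two results already available in the excerpt. Theorem~\ref{theorem.main0} states that $R_{\mathrm{HM}}(\cdot)$ is constant on each connected component of $\mathcal{B}^h$, and Theorem~\ref{theorem.same} asserts that $Q_{\mathrm{SK}}(\mathbf{a}) = Q_{\mathrm{HM}}(\mathbf{a})$ for every word $\mathbf{a} \in \mathcal{B}^h$. So if $\mathbf{b}$ and $\mathbf{b}'$ lie in the same connected component, one simply writes
\[
  Q_{\mathrm{SK}}(\mathbf{b}) = R_{\mathrm{HM}}(\mathbf{b}) = R_{\mathrm{HM}}(\mathbf{b}') = Q_{\mathrm{SK}}(\mathbf{b}'),
\]
and the proof is complete. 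Under this approach, there is no real obstacle to overcome: the two cited theorems do all the work.

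A second, more self-contained route is to mimic the \emph{second} proof of Theorem~\ref{theorem.main0}, substituting Theorem~\ref{theorem.main2'} for Theorem~\ref{theorem.main2}. Namely, it suffices to treat a single crystal edge $\mathbf{b}' = f_i(\mathbf{b})$; write $\mathbf{b} = b_1 \cdots b_h$ and $\mathbf{b}' = b'_1 \cdots b'_h$, with $b_r = b'_r$ for all $r \neq m$. By construction of the recording tableau under $\mathrm{SK}$, equality $Q_{\mathrm{SK}}(\mathbf{b}) = Q_{\mathrm{SK}}(\mathbf{b}')$ reduces to showing that the insertion tableaux $P_{\mathrm{SK}}(\mathbf{b}^{(s)})$ and $P_{\mathrm{SK}}(\mathbf{b}'^{(s)})$ share a common shape for every $1 \leqslant s \leqslant h$. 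For $s < m$ this is trivial, and for $s \geqslant m$ one has $\mathbf{b}'^{(s)} = f_i(\mathbf{b}^{(s)})$, so Theorem~\ref{theorem.main2'} gives $P_{\mathrm{SK}}(\mathbf{b}'^{(s)}) = f_i(P_{\mathrm{SK}}(\mathbf{b}^{(s)}))$, and crystal operators on semistandard unimodal tableaux preserve shape. Propagating this equality along a path of $e_i$'s and $f_i$'s connecting any two elements of a connected component yields the result.

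Of the two routes I would favor the first, since it is one line and requires no additional bookkeeping; the second is of interest mainly because it parallels the existing argument and makes manifest that Theorem~\ref{theorem.main2'} plays the same structural role for $\mathrm{SK}$ as Theorem~\ref{theorem.main2} does for $\mathrm{HM}$. The only mild subtlety in either approach is ensuring that $\mathrm{SK}$ and $\mathrm{HM}$ are being applied to the same word (not to standardizations or inverses), which is precisely the content of Theorem~\ref{theorem.same}.
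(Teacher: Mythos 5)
Your proposal is correct and matches the paper exactly: the authors likewise obtain this theorem either by combining Theorem~\ref{theorem.main0} with Theorem~\ref{theorem.same}, or by replacing $\mathrm{HM}$ with $\mathrm{SK}$ in the second proof of Theorem~\ref{theorem.main0}. You have in fact reproduced both of the routes the paper cites, so there is nothing to add.
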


The upshot of all this is the following theorem.

\begin{theorem}
\label{theorem.upshot}
With respect to the crystal operators we have defined on primed tableaux and the induced operators on semistandard 
unimodal tableaux described above, the map $\Phi_Q$ is a crystal isomorphism.
\end{theorem}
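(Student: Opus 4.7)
The plan is to reduce the statement to the three insertion compatibility results already in hand, namely Theorem~\ref{theorem.main0} (or its analog Theorem~\ref{theorem.main0'}), Theorem~\ref{theorem.main2}, and Theorem~\ref{theorem.main2'}, together with the shape-compatibility encoded in Theorem~\ref{theorem.same}. First I would verify the basic properties of $\Phi_{\mathbf{Q}}$: bijectivity follows by construction, since $\mathrm{HM}^{-1}$ restricted to pairs with second coordinate $\mathbf{Q}$ is a bijection onto a subset of $\mathcal{B}^{|\lambda|}$, and $\mathrm{SK}$ restricted to that same subset (which, by Theorem~\ref{theorem.same}, is precisely the set of words with $\mathrm{SK}$-recording tableau $\mathbf{Q}$) is a bijection onto $\mathcal{SUT}(\lambda) \times \{\mathbf{Q}\}$. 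Weight preservation is immediate, because both $P_{\mathrm{HM}}$ and $P_{\mathrm{SK}}$ preserve weight, so $\wt(\Phi_{\mathbf{Q}}(\mathbf{P})) = \wt(\mathbf{b}) = \wt(\mathbf{P})$ where $\mathbf{b} = \mathrm{HM}^{-1}(\mathbf{P}, \mathbf{Q})$.

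The heart of the argument is compatibility with $f_i$ (the $e_i$ case is symmetric). Fix $\mathbf{P} \in \mathcal{PT}(\lambda)$ and set $\mathbf{b} = \mathrm{HM}^{-1}(\mathbf{P}, \mathbf{Q})$, $\mathbf{R} = \Phi_{\mathbf{Q}}(\mathbf{P})$, so $\mathrm{SK}(\mathbf{b}) = (\mathbf{R}, \mathbf{Q})$ by Theorem~\ref{theorem.same}. I would argue that
\[
	\Phi_{\mathbf{Q}}(f_i(\mathbf{P})) = f_i(\mathbf{R}) = f_i(\Phi_{\mathbf{Q}}(\mathbf{P}))
\]
by chasing the diagram: on one hand, Theorem~\ref{theorem.main2} together with the recording-tableau invariance of Theorem~\ref{theorem.main0} gives $\mathrm{HM}(f_i(\mathbf{b})) = (f_i(\mathbf{P}), \mathbf{Q})$, hence $\mathrm{HM}^{-1}(f_i(\mathbf{P}), \mathbf{Q}) = f_i(\mathbf{b})$. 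On the other hand, Theorem~\ref{theorem.main2'} combined with Theorem~\ref{theorem.main0'} yields $\mathrm{SK}(f_i(\mathbf{b})) = (f_i(\mathbf{R}), \mathbf{Q})$. Composing, $\Phi_{\mathbf{Q}}(f_i(\mathbf{P})) = P_{\mathrm{SK}}(\mathrm{HM}^{-1}(f_i(\mathbf{P}), \mathbf{Q})) = P_{\mathrm{SK}}(f_i(\mathbf{b})) = f_i(\mathbf{R})$, as desired.

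One must also handle the degenerate case $f_i(\mathbf{P}) = \mathbf{0}$: by Theorem~\ref{theorem.main2} this occurs if and only if $f_i(\mathbf{b}) = \mathbf{0}$, which by Theorem~\ref{theorem.main2'} happens if and only if $f_i(\mathbf{R}) = \mathbf{0}$, so the $\mathbf{0}$-cases match up on both sides of the diagram. The identical argument, using the $e_i$ versions of Theorems~\ref{theorem.main2} and~\ref{theorem.main2'} (for the latter, established as the reverse of Theorem~\ref{theorem.main3}'s analog), shows $\Phi_{\mathbf{Q}}(e_i(\mathbf{P})) = e_i(\Phi_{\mathbf{Q}}(\mathbf{P}))$.

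The main potential obstacle is not in this argument itself, which is formal, but in the dependence on $\mathbf{Q}$: one must be sure that the crystal operator on $\mathcal{SUT}(\lambda)$ induced by reading words is the same as what one would obtain by instead pulling back through $\Phi_{\mathbf{Q}}$. This is precisely what Theorem~\ref{theorem.main2'} provides, and it is the crucial ingredient that makes the induced operators on $\mathcal{SUT}(\lambda)$ well-behaved and independent of $\mathbf{Q}$. Once that is invoked, the entire theorem collapses into the diagram chase above.
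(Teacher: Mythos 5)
Your proof is correct and follows essentially the same route as the paper's, which disposes of the theorem in two sentences by observing that $\Phi_{\mathbf{Q}}$ is a bijection and citing Theorems~\ref{theorem.main0} and~\ref{theorem.main0'}. If anything you are more careful: you make explicit that the commutation with $f_i$ and $e_i$ really rests on the intertwining statements of Theorems~\ref{theorem.main2} and~\ref{theorem.main2'} (together with Theorem~\ref{theorem.same} to identify the two recording tableaux), a dependence the paper's one-line proof leaves implicit.
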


\begin{proof}
This says no more than that $\Phi_Q$ is a bijection (which we have established) and that it commutes with the crystal 
operations on primed tableaux and semistandard unimodal tableaux. But this is simply combining 
Theorem~\ref{theorem.main0} with Theorem \ref{theorem.main0'}.
\end{proof}

Theorem~\ref{theorem.upshot} immediately gives us another combinatorial interpretation of the coefficients
$g_{w \lambda}$. Let $k_{\mu \lambda}$ be the number of semistandard unimodal tableaux of shape $\mu$ and 
weight $\lambda$, whose reading words are Yamanouchi (that is, tableaux that are the highest weight elements of 
$\mathcal{SUT}(\mu)$).

\begin{corollary}
\label{corollary.main2'}
	The expansion of $F^C_w(\mathbf{x})$ in terms of Schur symmetric functions is
	\begin{equation*}
	F^C_w(\mathbf{x}) = \sum_\lambda g_{w\lambda} s_\lambda (\mathbf{x}), \quad
	\text{where} \quad
	g_{w\lambda} = \sum_\mu 2^{\ell(\mu)} \big|\mathcal{UT}_w (\mu) \big| \ k_{\mu\lambda}\ .
	\end{equation*}
\end{corollary}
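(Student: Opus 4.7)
The plan is to reduce Corollary~\ref{corollary.main2'} to Corollary~\ref{corollary.main2} via the crystal isomorphism $\Phi_{\mathbf{Q}}$ supplied by Theorem~\ref{theorem.upshot}. More precisely, Corollary~\ref{corollary.main2} already gives the expansion
\[
	F^C_w(\mathbf{x}) = \sum_\lambda \Big(\sum_\mu 2^{\ell(\mu)} \big|\mathcal{UT}_w(\mu)\big| \, h_{\mu\lambda}\Big) s_\lambda(\mathbf{x}),
\]
where $h_{\mu\lambda}$ counts highest weight primed tableaux of shape $\mu$ and weight $\lambda$ (Proposition~\ref{proposition.highest}). Thus it suffices to prove the identity $h_{\mu\lambda} = k_{\mu\lambda}$ for every pair of partitions $(\mu,\lambda)$.

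To establish this identity, I would fix a shifted shape $\mu$ and a standard shifted tableau $\mathbf{Q}$ of shape $\mu$, and consider the map $\Phi_{\mathbf{Q}} \colon \mathcal{PT}(\mu) \rightarrow \mathcal{SUT}(\mu)$. By Theorem~\ref{theorem.upshot}, $\Phi_{\mathbf{Q}}$ is a bijection that intertwines the crystal operators on both sides. Since the insertion maps $\mathrm{HM}$ and $\mathrm{SK}$ both preserve the content of the underlying word, the map $\Phi_{\mathbf{Q}}$ preserves the weight function on tableaux. Consequently $\Phi_{\mathbf{Q}}$ restricts to a weight-preserving bijection between highest weight elements of $\mathcal{PT}(\mu)$ and highest weight elements of $\mathcal{SUT}(\mu)$. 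Counting those of weight $\lambda$ on each side gives $h_{\mu\lambda} = k_{\mu\lambda}$.

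Substituting this identity into the expansion above yields
\[
	g_{w\lambda} = \sum_\mu 2^{\ell(\mu)} \big|\mathcal{UT}_w(\mu)\big| \, k_{\mu\lambda},
\]
which is the desired expression. There is no real obstacle here, since the whole machinery has been set up: the only point that requires a brief verification is that $\Phi_{\mathbf{Q}}$ preserves weights, which follows immediately from the definition of $\Phi_{\mathbf{Q}}$ through $\mathrm{SK}\circ \mathrm{HM}^{-1}$ and the fact that both insertions record the same multiset of letters as in the input word. Everything else is a direct consequence of Theorem~\ref{theorem.upshot} combined with Corollary~\ref{corollary.main2}.
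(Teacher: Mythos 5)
Your proposal is correct and follows essentially the same route as the paper: the paper likewise deduces this corollary directly from Theorem~\ref{theorem.upshot}, using the weight-preserving crystal isomorphism $\Phi_{\mathbf{Q}}$ to match highest weight elements of $\mathcal{PT}(\mu)$ with those of $\mathcal{SUT}(\mu)$, which gives $h_{\mu\lambda}=k_{\mu\lambda}$ and reduces everything to Corollary~\ref{corollary.main2}.
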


Again, replacing $\ell(\mu)$ by $\ell(\mu)-o(w)$ gives the Schur expansion of $F^B_w(\mathbf{x})$.

\begin{example}
According to Example~\ref{exa}, we should find three highest weight semistandard unimodal tableaux of shape $(3,1)$, 
one for each of the weights $(3,1)$, $(2,2)$, and $(2,1,1)$.  These are  $\young(211,:1),\ \young(211,:2)$ and 
$\young(321,:1)$.
\end{example}

\section{Outlook}

There are several other generalizations of the results in this paper that one could pursue. First of all, 
it would be interesting to consider affine Stanley symmetric functions of type $B$ or $C$. As in affine type $A$,
this would involve a generalization of crystal bases as the expansion is no longer in terms of Schur functions.
Another possible extension is to consider $K$-theoretic analogues of Stanley symmetric functions, such
as the (dual) stable Grothendieck polynomials. In type $A$, a crystal theoretic analysis of dual stable
Grothendieck polynomials was carried out in~\cite{galashin.2015}. Type $D$ should also be considered
from this point of view. Finally, the definition of the reading word $\mathrm{rw}$ of Section~\ref{section.explicit}
and the characterization of highest weight elements in Proposition~\ref{proposition.highest} is very similar to
the reading words in~\cite[Section 3.2]{Liu.2017} in the analysis of Kronecker coefficients.

\appendix
\section{Proof of Theorem~\ref{theorem.main2}}
\label{section.proof main2}

In this appendix, we provide the proof of Theorem~\ref{theorem.main2}.

\subsection{Preliminaries}

We use the fact from \cite{Haiman.1989} that taking only elements smaller or equal to $i+1$ from the word $\mathbf{b}$ and 
applying the mixed insertion corresponds to taking only the part of the tableau $\mathbf{T}$ with elements $\leqslant i+1$.
Thus, it is enough to prove the theorem for a ``truncated'' word $\mathbf{b}$ without any letters greater than $i+1$.
To shorten the notation, we set $j= i+1$ in this appendix. We sometimes also restrict to just the letters $i$ and $j$
in a word $w$. We call this the \defn{$\{i,j\}$-subword} of $w$.

First, in Lemma~\ref{lemma.main} we justify the notion of the reading word $\mathrm{rw}(\textbf{T})$ and provide the 
reason to use a bracketing rule on it. After that, in Section~\ref{section.main.proof} we prove that the action of the 
crystal operator $f_i$ on $\mathbf{b}$ corresponds to the action of $f_i$ on $\mathbf{T}$ after the insertion.

Given a word $\mathbf{b}$, we apply the crystal bracketing rule for its $\{i,j\}$-subword and globally declare the 
rightmost unbracketed $i$ in $\mathbf{b}$ (i.e. the letter the crystal operator $f_i$ acts on) to be a bold $i$.
Insert the letters of $\mathbf{b}$ via Haiman insertion to obtain the insertion tableau $\mathbf{T}$. During this process, 
we keep track of the position of the bold $i$ in the tableau via the following rules. When the bold $i$ from $\mathbf{b}$ is 
inserted into $\mathbf{T}$, it is inserted as the rightmost $i$ in the first row of $\mathbf{T}$ since by definition it is 
unbracketed in $\mathbf{b}$ and hence cannot bump a letter $j$. From this point on, the tableau $\mathbf{T}$ has a 
\defn{special} letter $i$ and we track its position:

\begin{enumerate}
\item If the special $i$ is unprimed, it is always the rightmost $i$ in its row. When a letter $i$ is bumped from this row, 
only one of the non-special letters $i$ can be bumped, unless the special $i$ is the only $i$ in the row. When the non-diagonal 
special $i$ is bumped from its row to the next row, it will be inserted as the rightmost $i$ in the next row.
\item When the diagonal special $i$ is bumped from its row to the column to its right, it is inserted as the bottommost $i'$ 
in the next column.
\item If the special $i$ is primed, it is always the bottommost $i'$ in its column. When a letter $i'$ is bumped from this
column, only one of the non-special letters $i'$ can be bumped, unless the special $i'$ is the only $i'$ in the column. 
When the primed special $i$ is bumped from its column to the next column, it is inserted as the bottommost $i'$ in the 
next column.
\item When $i$ is inserted into a row with the special unprimed $i$, the rightmost $i$ becomes special.
\item When $i'$ is inserted into a column with the special primed $i$, the bottommost primed $i$ becomes special.
\end{enumerate}

\begin{lemma}
\label{lemma.main}
Using the rules above, after the insertion process of $\mathbf{b}$, the special $i$ in $\mathbf{T}$ is the same as 
the rightmost unbracketed $i$ in the reading word $\mathrm{rw}(\mathbf{T})$ (i.e. the definition of the bold $i$ in
$\mathbf{T}$). Moreover, the number of unbracketed letters $i$ in $\mathbf{b}$ is equal to the number of unbracketed 
letters $i$ in $\mathrm{rw}(\mathbf{T})$.
\end{lemma}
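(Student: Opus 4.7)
The plan is to prove the lemma by induction on the length of the word $\mathbf{b}$, maintaining a strengthened inductive invariant at every intermediate insertion step. More precisely, at the moment when a prefix $\mathbf{b}'$ of $\mathbf{b}$ has been inserted to produce an intermediate tableau $\mathbf{T}'$, I would keep track of two statements simultaneously: \emph{(i)} the number of unbracketed letters $i$ (and, separately, $j := i+1$) in the $\{i,j\}$-subword of $\mathbf{b}'$ equals the number of unbracketed letters $i$ (resp.\ $j$) in $\mathrm{rw}(\mathbf{T}')$; and \emph{(ii)} whenever a special $i$ has been declared in $\mathbf{T}'$, it is precisely the rightmost unbracketed $i$ in $\mathrm{rw}(\mathbf{T}')$. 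The base case $\mathbf{b}' = \emptyset$ is trivial, and both claims of the lemma follow by carrying the induction to the end of $\mathbf{b}$.

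For the inductive step, when one more letter $\ell$ is inserted into $\mathbf{T}'$ to produce $\mathbf{T}''$, I would split into cases based on whether $\ell \notin \{i,j\}$, $\ell = i$, or $\ell = j$. A key sub-claim is that the $\{i,j\}$-subword of $\mathrm{rw}(\mathbf{T}'')$ is obtained from that of $\mathrm{rw}(\mathbf{T}')$ by at most appending the single new letter (when $\ell \in \{i,j\}$), together with local repositionings of $i$'s and $j$'s that do not alter the bracketed pairing; this immediately preserves invariant \emph{(i)}. When $\ell = i$, the letter is placed as the rightmost $i$ of the first row, and rule (4) promotes it to the new special letter precisely when it is unbracketed in the new reading word, matching the rightmost-unbracketed characterization. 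When $\ell = j$ the new letter either settles in the first row or bumps a $j'$ into the next column; in each subcase the $j$ is inserted into the reading word at a position that brackets at most one previously unbracketed $i$, consistent with the tracking.

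The main obstacle is the elaborate case analysis required when the bumping chain touches or displaces the special letter itself. Three technically delicate subcases must be handled: an unprimed special $i$ pushed from its row into the next row (rule 1), an unprimed special $i$ on the diagonal that is primed and moved column-rightward (rule 2), and a primed special $i$ pushed from its column into the next column (rule 3). Each of these transitions can alter the read-position of the special letter, and can even move it from the unprimed portion of the reading word into the primed portion, so one must verify in each case that the resulting position is still the rightmost unbracketed $i$. The crucial enabling fact is that the primed/unprimed reading conventions together with the tableau shape constraints (at most one unprimed $i$ per column and at most one $i'$ per row) guarantee that the ordering of $\{i,j\}$-entries in $\mathrm{rw}(\mathbf{T}'')$ shifts exactly in the manner prescribed by rules (1)--(5), and in particular no new bracketed pair can form between the displaced special letter and the $i$'s or $j$'s elsewhere in the tableau.
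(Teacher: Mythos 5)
Your overall strategy is exactly the paper's: induct on prefixes of $\mathbf{b}$, maintaining simultaneously that the unbracketed counts of $i$ and $j$ agree between the prefix and $\mathrm{rw}(\mathbf{T}')$ and that the special letter sits at the rightmost unbracketed $i$ of $\mathrm{rw}(\mathbf{T}')$, with a case split on the inserted letter and on how the bumping path interacts with the $\{i,j\}$-entries. So the architecture is right. The gap is that your ``key sub-claim'' --- that the $\{i,j\}$-subword of the reading word changes only by appending one letter plus ``local repositionings that do not alter the bracketed pairing'' --- is precisely the entire content of the lemma, it is asserted rather than proved, and as literally stated it is false. The bracketed pairing \emph{does} change under insertion: for instance, when an $i$ bumps a $j$ from a row containing $i^n j^m$, the relevant segment of the reading word changes from $\ldots i^n j^m \ldots$ to $\ldots j\, i^{n+1} j^{m-1} \ldots$, which re-pairs letters; what survives is only the \emph{number} of unbracketed $i$'s and the identity of the rightmost unbracketed one. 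Verifying this invariant-preservation is nontrivial exactly in the configurations you flag but do not analyze: when a letter is bumped off the diagonal, a $j$ becomes a $j'$ (or an $i$ becomes an $i'$) and migrates from the unprimed to the primed part of the reading word, i.e.\ from one end of the $\{i,j\}$-subword region to a completely different position. The paper's proof spends most of its length (its Cases 4.2.1--4.2.5) checking, configuration by configuration, that these global migrations still preserve the unbracketed counts and the position of the special letter; nothing in your sketch substitutes for that check, and the ``crucial enabling fact'' you invoke at the end is just a restatement of what must be shown.

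Two smaller points. First, your description of the $\ell=j$ case is off: in the truncated alphabet $j$ is the largest letter, so it is always appended to the end of the first row and never bumps a $j'$ (since $j' < j$); it is the insertion of an \emph{unbracketed} $i$ that may displace a $j'$ column-wise. Second, your treatment of $\ell=i$ conflates the bracketed and unbracketed situations: only when the new $i$ is unbracketed can one conclude the first row has no $j$ and the letter lands as the rightmost $i$ there; when it is bracketed with an earlier $j$, it may bump a $j$ into the second row, and one must separately verify (as the paper does in its Case 3.2) that the resulting reshuffling $\ldots i^n j^m \to \ldots j\, i^{n+1} j^{m-1}$ leaves the special $i$ and the unbracketed counts intact.
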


\begin{proof}
First, note that since both the number of letters $i$ and the number of letters $j$ are equal in $\mathbf{b}$ and 
$\mathrm{rw}(\mathbf{T})$, the fact that the number of unbracketed letters $i$ is the same implies that the number of
unbracketed letters $j$ must also be the same. We use induction on $1 \leqslant s \leqslant h$, where the letters 
$b_1 \ldots b_s$ of $\mathbf{b}=b_1 b_2 \ldots b_h$ have been inserted using Haiman mixed insertion with the above 
rules. That is, we check that at each step of the insertion algorithm the statement of our lemma stays true. 

The induction step is as follows: Consider the word $b_1 \ldots b_{s-1}$ with a corresponding 
insertion tableau $\mathbf{T}^{(s-1)}$. 
If the bold $i$ in $\mathbf{b}$ is not in $b_1\ldots b_{s-1}$, then $\mathbf{T}^{(s-1)}$ does not contain a special letter $i$.
Otherwise, by induction hypothesis assume that the bold $i$ in $b_1\ldots b_{s-1}$  by the above rules corresponds to the 
special $i$ in $\mathbf{T}^{(s-1)}$, that is, it is in the position corresponding to the rightmost unbracketed $i$ in the 
reading word $\mathrm{rw}(\mathbf{T}^{(s-1)})$. Then we need to prove that for $b_1 \ldots b_s$, the special $i$ 
in $\mathbf{T}^{(s-1)}$ ends up in the position corresponding to the rightmost unbracketed $i$ in the reading word of
$\mathbf{T}^{(s)} = \mathbf{T}^{(s-1)} \leftsquigarrow b_s$.
We also need to verify that the second part of the lemma remains true for $\mathbf{T}^{(s)}$.

Remember that we are only considering ``truncated'' words $\mathbf{b}$ with all letters $\leqslant j$.

\smallskip

\noindent
\textbf{Case 1.}
Suppose $b_s = j$. 
In this case $j$ is inserted at the end of the first row of $\mathbf{T}^{(s-1)}$, and $\mathrm{rw}(\mathbf{T}^{(s)})$ has 
$j$ attached at the end. Thus, both statements of the lemma are unaffected.

\smallskip

\noindent
\textbf{Case 2.} 
Suppose $b_s = i$ and $b_s$ is unbracketed in $b_1 \ldots b_{s-1} b_s$.
Then there is no special $i$ in tableau $\mathbf{T}^{(s-1)}$, and $b_s$ might be the bold $i$ of the word $\mathbf{b}$.
Also, there are no unbracketed letters $j$ in $b_1 \ldots b_{s-1}$, and thus all $j$ in $\mathrm{rw}(\mathbf{T}^{(s-1)})$ 
are bracketed. Thus, there are no letters $j$ in the first row of $\mathbf{T}^{(s-1)}$, and $i$ is inserted in the first 
row of $\mathbf{T}^{(s-1)}$, possibly bumping the letter $j'$ from column $c$ into an empty column $c+1$ in the process.
Note that if $j'$ is bumped, moving it to column $c+1$ of $\mathbf{T}^{(s)}$ does not change the reading word, 
since column $c$ of $\mathbf{T}^{(s-1)}$ does not contain any primed letters other than $j'$.
The reading word of $\mathbf{T}^{(s)}$ is thus the same as $\mathrm{rw}(\mathbf{T}^{(s-1)})$ except for an additional 
unbracketed $i$ at the end. The number of unbracketed letters $i$ in both $\mathrm{rw}(\mathbf{T}^{(s)})$ and 
$b_1 \ldots b_{s-1} b_s$ is thus increased by one compared to $\mathrm{rw}(\mathbf{T}^{(s-1)})$ and $b_1 \ldots b_{s-1}$.
If $b_s$ is the bold $i$ of the word $\mathbf{b}$, the special $i$ of tableau $\mathbf{T}^{(s)}$ is the rightmost $i$ on the 
first row and corresponds to the rightmost unbracketed $i$ in $\mathrm{rw}(\mathbf{T}^{(s)})$. 

\smallskip

\noindent
\textbf{Case 3.} 
Suppose $b_s = i$ and $b_s$ is bracketed with a $j$ in the word $b_1\ldots b_{s-1}$. 
In this case, according to the induction hypothesis, $\mathrm{rw}(\mathbf{T}^{(s-1)})$ has an unbracketed $j$. There are 
two options. 

\smallskip

\noindent
\textbf{Case 3.1.} 
If the first row of $\mathbf{T}^{(s-1)}$ does not contain $j$, $b_s$ is inserted at the end of the first row of $\mathbf{T}^{(s-1)}$, 
possibly bumping $j'$ in the process. 
Regardless, $\mathrm{rw}(\mathbf{T}^{(s)})$ does not change except for attaching an $i$ at the end (see Case 2). 
This $i$ is bracketed with one unbracketed $j$ in $\mathrm{rw}(\mathbf{T}^{(s)})$. The special $i$ (if there was one 
in $\mathbf{T}^{(s-1)}$) does not change its position and the statement of the lemma remains true.

\smallskip

\noindent
\textbf{Case 3.2.} 
If the first row of $\mathbf{T}^{(s-1)}$ does contain a $j$, inserting $b_s$ into $\mathbf{T}^{(s-1)}$ bumps $j$ 
(possibly bumping $j'$ beforehand) into the second row, where $j$ is inserted at the end of the row. 
So, if the first row contains $n \geqslant 0$ elements $i$ and $m \geqslant 1$ elements $j$, the reading 
word $\mathrm{rw}(\mathbf{T}^{(s-1)})$ ends with $\ldots i^n j^m$, and $\mathrm{rw}(\mathbf{T}^{(s)})$ ends with 
$\ldots j i^{n+1} j^{m-1}$.  Thus, the number of unbracketed letters $i$ does not change and if there was a special $i$ 
in the first row, it remains there and it still corresponds to the rightmost unbracketed $i$ in $\mathrm{rw}(\mathbf{T}^{(s)})$.

\smallskip

\noindent
\textbf{Case 4.} 
Suppose $b_s < i$. 
Inserting $b_s$ could change both the primed reading word and unprimed reading word of $\mathbf{T}^{(s-1)}$. 
As long as neither $i$ nor $j$ is bumped from the diagonal, we can treat primed and unprimed changes separately. 

\smallskip

\noindent
\textbf{Case 4.1.} 
Suppose neither $i$ nor $j$ is not bumped from the diagonal during the insertion.
This means that there are no transitions of letters $i$ or $j$ between the primed and the unprimed parts of the reading word.
Thus, it is enough to track the bracketing relations in the unprimed reading word; the bracketing relations in the primed 
reading word can be verified the same way via the transposition. After we make sure that the number of unbracketed letters 
$i$ and $j$ changes neither in the primed nor unprimed reading word, it is enough to consider the case when the 
special $i$ is unprimed, since the case when it is primed can again be checked using the transposition.
To avoid going back and forth, we combine these two processes together in each subcase to follow.

\smallskip

\noindent
\textbf{Case 4.1.1.} 
If there are no letters $i$ and $j$ in the bumping sequence, the unprimed $\{i,j\}$-subword of $\mathrm{rw}(\mathbf{T}^{(s)})$ 
is the same as in $\mathrm{rw}(\mathbf{T}^{(s-1)})$. The special $i$ (if there is one) remains in its position, and thus the 
statement of the lemma remains true.

\smallskip

\noindent
\textbf{Case 4.1.2.} 
Now consider the case when there is a $j$ in the bumping sequence, but no $i$. 
Let that $j$ be bumped from the row $r$. Since there is no $i$ bumped, row $r$ does not contain any letters $i$. 
Thus, bumping $j$ from row $r$ to the end of row $r+1$ does not change the $\{i,j\}$-subword
of $\mathrm{rw}(\mathbf{T}^{(s-1)})$, so the statement of the lemma remains true.

\smallskip

\noindent
\textbf{Case 4.1.3.} 
Consider the case when there is an $i$ in the bumping sequence. Let that $i$ be bumped from the row $r$. 

\smallskip

\noindent
\textbf{Case 4.1.3.1.} 
If there is a (non-diagonal) $j$ in row $r+1$, it is bumped into row $r+2$  ($j'$ may have been bumped in the process). 
Note that in this case the $i$ bumped from row $r$ could not have been a special one. 
If there are $n \geqslant 0$ elements $i$ and $m \geqslant 1$ elements $j$ in row $r$, the part of the reading 
word $\mathrm{rw}(\mathbf{T}^{(s-1)})$ with $\ldots i^n j^m i \ldots$ changes to $\ldots j i^{n+1} j^{m-1} \ldots$ in $\mathrm{rw}(\mathbf{T}^{(s)})$. 
The bracketing relations remain the same, and if row $r+1$ contained a special $i$, it would remain there and would 
correspond to the rightmost $i$ in $\mathrm{rw}(\mathbf{T}^{(s)})$.

\smallskip

\noindent
\textbf{Case 4.1.3.2.} 
If there are no letters $j$ in row $r+1$, and $j'$ in row $r+1$ does not bump a $j$, the 
$\{i,j\}$-subword does not change and the statement of the lemma remains true.

\smallskip

\noindent
\textbf{Case 4.1.3.3.} 
Now suppose there are no letters $j$ in row $r+1$ and $j'$ from row $r+1$ bumps a $j$ from another row.
This can only happen if, before the $i$ was bumped, there was only one $i$ in row $r$ of $\mathbf{T}^{(s-1)}$, there is 
a $j'$ immediately below it, and there is a $j$ in the column to the right of $i$ and in row $r' \leqslant r$. 

If $r'=r$, then after the insertion process, $i$ and $j$ are bumped from row 
$r$ to row $r+1$. Since there was only one $i$ in row $r$ and there are no letters $j$ in row $r+1$, the $\{i,j\}$-subword 
of $\mathrm{rw}(\mathbf{T}^{(s-1)})$ does not change and the statement of the lemma remains true.

Otherwise $r' < r$. Then there are no letters $i$ in row $r'$ and by assumption there is no letter $j$ in row $r+1$. 
Thus, moving $i$ to row $r+1$ and moving $j$ to the row $r'+1$ does not change the $\{i,j\}$-subword of 
$\mathrm{rw}(\mathbf{T}^{(s-1)})$ and the statement of the lemma remains true.

\smallskip

\noindent
\textbf{Case 4.2.} 
Suppose $i$ or $j$ (or possibly both) are bumped from the diagonal in the insertion process.

\smallskip

\noindent
\textbf{Case 4.2.1.} 
Consider the case when the insertion sequence ends with $\quad\cdots \rightarrow z \rightarrow j [j']$ with $z<i$ 
and possibly $ \rightarrow j$ right after it. Let the bumped diagonal $j$ be in column $c$. 
Then columns $1,2, \ldots, c$ of $\mathbf{T}^{(s-1)}$ could only contain elements $\leqslant z$, except for the $j$ on the 
diagonal. Thus, the bumping process just moves $j$ from the unprimed reading word to the primed reading word 
without changing the overall order of the $\{i,j\}$-subword.

\smallskip

\noindent
\textbf{Case 4.2.2.} 
Consider the case when the insertion sequence ends with $\quad \cdots \rightarrow i' \rightarrow i \rightarrow j[j']$ and 
possibly $\rightarrow j$. Let the bumped diagonal $j$ be in row (and column) $r$. Note that $r$ must be the last row of 
$\mathbf{T}^{(s-1)}$. Then $i$ has to be bumped from row $r-1$ (and, say, column $c$) and $i'$ also has to be in row $r-1$ 
(moreover, it has to be the only $i'$ in column $c-1$). 
Also, since there are no letters $j'$ in column $c$ (otherwise it would be in row $r$, which is impossible), 
bumping $i'$ to column $c$ does not change the $\{i,j\}$-subword of $\mathrm{rw}(\mathbf{T}^{(s-1)})$. 
Note that after $i'$ moves to column $c$, there are no $i'$ or $j'$ in columns $1,\ldots, r$, and thus priming $j$ and 
moving it to column $r+1$ does not change the $\{i,j\}$-subword.
If the last row $r$ contains $n$ elements $j$, the $\{i,j\}$-subword of $\mathbf{T}^{(s-1)}$ contains $\ldots j^n i \ldots$ 
and after the insertion it becomes $\ldots j i j^{n-1} \ldots$, where  the left $j$ is from the primed subword. 
Thus, the number of bracketed letters $i$ does not change. 
Also, if we moved the special $i$ in the process, it could only have been the bumped $i'$. Its position in the reading 
word is unaffected.

\smallskip

\noindent
\textbf{Case 4.2.3.} 
The case when the insertion sequence does not contain $i'$, does not bump $i$ from the diagonal, but contains $i$ and 
bumps $j$ from the diagonal is analogous to the previous case.

\smallskip

\noindent
\textbf{Case 4.2.4.} 
Suppose both $i$ and $j$ are bumped from the diagonal. 
That could only be the case with diagonal $i$ bumped from row (and column) $r$, 
bumping another letter $i$ from the row $r$ and column $r+1$, 
and bumping $j$ from row (and column) $r+1$ (and possibly bumping $j$ to row $r+2$ at the end). 
Let the number of letters $i'$ in column $r+1$ be $n$ and let the number of letters $j$ in row $r+1$ be $m$. 

\smallskip

\noindent
\textbf{Case 4.2.4.1} 
Let $m\geqslant 2$. Then the $\{i,j\}$-subword of $\mathrm{rw}(\mathbf{T}^{(s-1)})$ contains $\ldots i^n j^m ii \ldots$ 
and after the insertion it becomes $\ldots j i^{n+1} j i j^{m-2} \ldots$. 
The number of unbracketed letters $i$ stays the same. Since $m \geqslant  2$, the special $i$ of $\mathbf{T}^{(s-1)}$ 
could not  have been involved in the bumping procedure. However, the special $i$ might have been the bottommost $i'$ in 
column $r+1$ of $\mathbf{T}^{(s-1)}$, and after the insertion the special $i$ would still be the bottommost $i'$ in column 
$r+1$ and would correspond to the rightmost unbracketed $i$ in $\mathrm{rw}(\mathbf{T}^{(s)})$:
\begin{equation*}
\young(\cdot\cdot\iprime\cdot,:ii\cdot,::jj) \quad \mapsto \quad
\young(\cdot\cdot\iprime\cdot,:\cdot\iprime\cdot,::i\jprime,:::j)
\end{equation*}

\smallskip

\noindent
\textbf{Case 4.2.4.2.} 
Let $m=1$. Then the $\{i,j\}$-subword of $\mathbf{T}^{(s-1)}$ contains $\ldots i^n j ii \ldots$ and after the insertion it 
becomes $\ldots j i^{n+1} i$. The number of unbracketed letters $i$ stays the same.
If the special $i$ was in row $r$ and column $r+1$, then after the insertion it becomes a diagonal one, and it would still 
correspond to the rightmost unbracketed $i$ in $\mathrm{rw}(\mathbf{T}^{(s)})$.

\smallskip

\noindent
\textbf{Case 4.2.5.} 
Suppose only $i$ is bumped from the diagonal (let that $i$ be on row and column $r$). 
Note that there cannot be an $i'$ in column $r$.

\smallskip

\noindent
\textbf{Case 4.2.5.1.} 
Suppose $i$ from the diagonal bumps another $i$ from column $r+1$ and row $r$. 
In that case there are no letters $j$ in row $r+1$. No letters $j$ or $j'$ are affected and thus the 
$\{i,j\}$-subword of $\mathbf{T}^{(s)}$ does not change, and the special $i$ in $\mathbf{T}^{(s)}$ (if there is one) 
still corresponds to the rightmost unbracketed $i$ in $\mathrm{rw}(\mathbf{T}^{(s)})$.

\smallskip

\noindent
\textbf{Case 4.2.5.2.} 
Suppose $i$ from the diagonal bumps $j'$ from column $r+1$ and row $r$. 
Note that $j'$ must be the only $j'$ in column $r+1$. 
Suppose also that there is one $j$ in row $r+1$. 
Denote the number of letters $i'$ in column $r+1$ of $\mathbf{T}^{(s-1)}$ by $n$. 
If there is a $j$ in row $r+1$ of $\mathbf{T}^{(s-1)}$, then the $\{i,j\}$-subword of $\mathbf{T}^{(s-1)}$ contains 
$\ldots i^n jji \ldots$ and after the insertion it becomes $\ldots ji^{n+1}j \ldots$.
If there is no $j$ in row $r+1$ of $\mathbf{T}^{(s-1)}$, then the $\{i,j\}$-subword of $\mathbf{T}^{(s-1)}$ contains 
$\ldots i^n ji \ldots$ and after the insertion it becomes $\ldots ji^{n+1} \ldots$.
The number of unbracketed letters $i$ is unaffected. If the special $i$ of $\mathbf{T}^{(s-1)}$ was the bottommost $i'$ in 
column $r+1$ of $\mathbf{T}^{(s-1)}$, after the insertion the special $i$ is still the bottommost $i'$ in column 
$r+1$ and corresponds to the rightmost unbracketed $i$ in $\mathrm{rw}(\mathbf{T}^{(s)})$.
\end{proof}

\begin{corollary}
\label{corollary.f annihilate}
	\begin{equation*}
		f_i (\mathbf{b}) = \mathbf{0} \quad \text{if and only if} \quad f_i (\mathbf{T}) = \mathbf{0}.
	\end{equation*}
\end{corollary}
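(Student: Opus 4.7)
The plan is to derive this corollary as an immediate consequence of Lemma~\ref{lemma.main}, which has just been established. The key observation is that both sides of the desired biconditional are governed by the existence of an unbracketed letter $i$: on the word side, $f_i(\mathbf{b}) = \mathbf{0}$ precisely when the bracketing procedure on the $\{i,j\}$-subword of $\mathbf{b}$ leaves no unbracketed $i$, since $f_i$ is defined to act on the rightmost such letter; on the tableau side, $f_i(\mathbf{T}) = \mathbf{0}$ precisely when the bracketing procedure applied to $\mathrm{rw}(\mathbf{T})$ leaves no unbracketed $i$, by the definition of the bold letter in Section~\ref{section.explicit}.

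The plan is therefore to simply translate both annihilation conditions into the numerical condition ``the number of unbracketed letters $i$ is zero'' (in $\mathbf{b}$ and in $\mathrm{rw}(\mathbf{T})$ respectively) and invoke the second assertion of Lemma~\ref{lemma.main}, which states that these two counts coincide. There are no cases to analyze and no auxiliary combinatorial structures to track; the corollary is a direct restatement of one half of Lemma~\ref{lemma.main} once the definitions of $f_i$ on words and on primed tableaux are unfolded.

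Since no technical obstacle arises, the only thing to be careful about in writing up is recording the two definitional reductions cleanly before citing the lemma. I would structure the proof as a single short paragraph: first the word side ($f_i(\mathbf{b})=\mathbf{0}$ iff no unbracketed $i$ in $\mathbf{b}$), then the tableau side ($f_i(\mathbf{T})=\mathbf{0}$ iff no unbracketed $i$ in $\mathrm{rw}(\mathbf{T})$), and finally the appeal to Lemma~\ref{lemma.main} to equate the two.
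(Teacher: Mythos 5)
Your proposal is correct and matches the paper's intent exactly: the corollary is stated without proof precisely because it follows by unfolding the two annihilation conditions into counts of unbracketed letters $i$ and citing the second assertion of Lemma~\ref{lemma.main}. Nothing further is needed.
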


\subsection{Proof of Theorem~\ref{theorem.main2}}
\label{section.main.proof}
By Lemma~\ref{lemma.main}, the cell $x$ in the definition of the operator $f_i$ corresponds to the bold $i$ in the 
tableau $\mathbf{T}$. Furthermore, we know how the bold $i$ moves during the insertion procedure. 
We assume that the bold $i$ exists in both $\mathbf{b}$ and $\mathbf{T}$, meaning that $f_i(\mathbf{b}) \neq \mathbf{0}$
and $f_i(\mathbf{T}) \neq \mathbf{0}$ by Corollary~\ref{corollary.f annihilate}.
We prove Theorem~\ref{theorem.main2} by induction on the length of the word $\mathbf{b}$.

\smallskip

\noindent
\textbf{Base.} 
Our base is for words $\mathbf{b}$ with the last letter being a bold $i$ (i.e. rightmost unbracketed $i$). 
Let $\mathbf{b} = b_1 \ldots b_{h-1} b_h$ and $f_i(\mathbf{b}) = b_1 \ldots b_{h-1} b'_h$, where $b_h = i$ and $b'_h = j$. 
Denote the mixed insertion tableau of $b_1 \ldots b_{h-1}$ as $\mathbf{T}_0$, the insertion tableau of 
$b_1 \ldots b_{h-1} b_h$ as $\mathbf{T}$, and the insertion tableau of $b_1 \ldots b_{h-1} b'_h$ as $\mathbf{T}'$. 
Note that $\mathbf{T}_0$ does not  have letters $j$ in the first row. If the first row of $\mathbf{T}_0$ ends with $\ldots j'$, 
then the first row of $\mathbf{T}$ ends with $\ldots \mathbf{i} j'$ and the first row of $\mathbf{T}'$ ends with $\ldots j' j$. 
If the first row of $\mathbf{T}_0$ does not contain $j'$, the first row of $\mathbf{T}$ ends with $\ldots \mathbf{i}$ and the first 
row of $\mathbf{T}'$ ends with $\ldots j$, and the cell $x_S$ is empty. 
In both cases $f_i(\mathbf{T}) = \mathbf{T}'$. 

\smallskip

\noindent
\textbf{Induction step.} 
Now, let $\mathbf{b} = b_1 \ldots b_h$ with operator $f_i$ acting on the letter $b_s$ in $\mathbf{b}$ with $s < h$. 
Denote the mixed insertion tableau of $b_1 \ldots b_{h-1}$ as $\mathbf{T}$ and the insertion tableau of 
$f_i(b_1 \ldots b_{h-1})$ as $\mathbf{T}'$. By induction hypothesis, we know that $f_i(\mathbf{T}) = \mathbf{T}'$. 
We want to show that $f_i(\mathbf{T} \leftsquigarrow b_h) = \mathbf{T}' \leftsquigarrow b_h$.
In Cases 1-3 below, we assume that the bold letter $i$ is unprimed. Since almost all results from the case with unprimed 
$i$ are transferrable to the case with primed bold $i$ via the transposition of the tableau $\mathbf{T}$, we just need to cover 
the differences in Case 4.

\smallskip

\noindent
\textbf{Case 1.} 
Suppose $\mathbf{T}$ falls under Case (1) of the rules for $f_i$: the bold $i$ is in the non-diagonal cell $x$ in row $r$ and 
column $c$ and the cell $x_E$ in the same row and column $c+1$ contains the entry $j'$. Consider the insertion path of $b_h$.

\smallskip

\noindent
\textbf{Case 1.1.} 
If the insertion path of $b_h$ in $\mathbf{T}$ contains neither cell $x$ nor cell $x_E$, the insertion path of $b_h$ in 
$\mathbf{T}'$ also does not contain cells $x$ and $x_E$. Thus, $f_i(\mathbf{T} \leftsquigarrow b_h) 
= \mathbf{T}' \leftsquigarrow b_h$.

\smallskip

\noindent
\textbf{Case 1.2.} 
Suppose that during the insertion of $b_h$ into $\mathbf{T}$, the bold $i$ is row-bumped by an unprimed element 
$d < i$ or is column-bumped by a primed element $d' \leqslant i'$. 
This could only happen if the bold $i$ is the unique $i$ in row $r$ of $\mathbf{T}$. 
During the insertion process, the bold $i$ is inserted into row $r+1$. 
Since there are no letters $i$ in row $r$ of $\mathbf{T}'$, inserting $b_h$ into $\mathbf{T}'$ inserts $d$ in cell $x$, 
bumps $j'$ to cell $x_E$, and bumps $j$ into row $r+1$. Thus we are in a situation similar to the induction base. 
It is easy to check that row $r+1$ does not contain any letters $j$ in $\mathbf{T}$.
If it contains $j'$, this $j'$ is bumped back into 
row $r+1$. Similar to the induction base, $f_i(\mathbf{T} \leftsquigarrow b_h) = \mathbf{T}' \leftsquigarrow b_h$.

\smallskip

\noindent
\textbf{Case 1.3.} 
Suppose that during the insertion of $b_h$ into $\mathbf{T}$, an unprimed $i$ is inserted into row $r$. 
Note that in this case, row $r$ in $\mathbf{T}$ must contain a $j$ (or else the $i$ from row $r$ would not be the 
rightmost unbracketed $i$ in $\mathrm{rw}(\mathbf{T})$). Thus inserting $i$ into row $r$ in $\mathbf{T}$ shifts the bold $i$ to 
column $c+1$, shifts $j'$ to column $c+2$ and bumps $j$ to row $r+1$. Inserting  $i$ into row $r$ in $\mathbf{T}'$ shifts 
$j'$ to column $c+1$ with a $j$ to the right of it, and bumps $j$ into row $r+1$. 
Thus $f_i(\mathbf{T} \leftsquigarrow b_h) = \mathbf{T}' \leftsquigarrow b_h$.

\smallskip

\noindent
\textbf{Case 1.4.}
Suppose that during the insertion of $b_h$ into $\mathbf{T}$, the $j'$ in cell $x_E$ is column-bumped by a primed 
element $d'$ and the cell $x$ is unaffected. Note that in order for $\mathbf{T} \leftsquigarrow b_h$ to be a valid 
primed tableau, $i$ must be smaller than $d'$, and thus $d'$ could only be $j'$. On the other hand, $j'$ cannot be inserted 
into column $c+1$ of $\mathbf{T}'$ in order for  $\mathbf{T}' \leftsquigarrow b_h$ to be a valid primed tableau. 
Thus this case is impossible.

\smallskip

\noindent
\textbf{Case 2.} 
Suppose tableau $\mathbf{T}$ falls under Case (2a) of the crystal operator rules for $f_i$. 
This means that for a bold $i$ in cell $x$ (in row $r$ and column $c$) of tableau $\mathbf{T}$, the cell $x_E$ contains
the entry $j$ or is empty and cell $x_S$ is empty. Tableau $\mathbf{T}'$ has all the same elements as $\mathbf{T}$, 
except for a $j$ in the cell $x$. We are interested in the case when inserting $b_h$ into either $\mathbf{T}$ or 
$\mathbf{T}'$ bumps the element from cell $x$.

\smallskip

\noindent
\textbf{Case 2.1.} 
Suppose that the non-diagonal bold $i$ in $\mathbf{T}$ (in row $r$) is row-bumped by an unprimed element 
$d < i$ or column-bumped by a primed element $d' < j'$. Element $d$ (or $d'$) bumps the bold $i$ into row $r+1$ of 
$\mathbf{T}$, while in $\mathbf{T}'$ (since there are no letters $i$ in row $r$ of $\mathbf{T}'$) it bumps $j$ from cell $x$ 
into row $r+1$. Thus we are in the situation of the induction base and $f_i(\mathbf{T} \leftsquigarrow b_h) 
= \mathbf{T}' \leftsquigarrow b_h$.

\smallskip

\noindent
\textbf{Case 2.2.}
Suppose $x$ is a non-diagonal cell in row $r$, and during the insertion of $b_h$ into $\mathbf{T}$, an unprimed $i$ 
is inserted into the row $r$. In this case, row $r$ in $\mathbf{T}$ must contain a letter $j$.
The insertion process shifts the bold $i$ one cell to the right in $\mathbf{T}$ and bumps a $j$ into row $r+1$, while 
in $\mathbf{T}'$ it just bumps $j$ into the row $r+1$. We end up in Case (2a) of the crystal operator rules for $f_i$
with bold $i$ in the cell $x_E$.

\smallskip

\noindent
\textbf{Case 2.3.}
Suppose that during the insertion of $b_h$ into $\mathbf{T}'$, the $j$ in the non-diagonal cell $x$ is column-bumped 
by a $j'$. This means that $j'$ was previously bumped from column $c-1$ and row $\geqslant r$. Thus the cell 
$x_{SW}$ (cell to the left of an empty $x_{S}$) is non-empty. Moreover, right before inserting $j'$ into the column $c$, 
the cell $x_{SW}$ contains an entry $< j'$. Inserting $j'$ into column $c$ of $\mathbf{T}$ just places $j'$ into the empty cell 
$x_S$. Inserting $j'$ into column $c$ of $\mathbf{T}'$ places $j'$ into $x$, and bumps $j$ into the empty cell $x_S$.
Thus, we end up in Case (2c) of the crystal operator rules after the insertion of $b_h$ with $y = x_S$.

\smallskip

\noindent
\textbf{Case 2.4.} 
Suppose that $x$ in $\mathbf{T}$ is a diagonal cell (in row $r$ and column $r$) and that it is row-bumped by an 
element $d<i$. Note that in this case there cannot be any letter $j$ in row $r+1$.
Also, since $d$ is inserted into cell $x$, there cannot be any letters $i'$ in columns $1,\ldots, r$, and thus there 
cannot be any letters $j'$ in column $r+1$ (otherwise the $i$ in cell $x$ would not be bold).
The bumped bold $i$ in tableau $\mathbf{T}$ is inserted as a primed bold $i'$ into the cell $z$ of column $r+1$.

\smallskip

\noindent
\textbf{Case 2.4.1.}
Suppose that there are no letters $i$ in column $r+1$ of $\mathbf{T}$.
In this case, the cell $z$ in $\mathbf{T}$ either contains $j$ (and then that $j$ would be bumped to the next row) or is empty.
Inserting $b_h$ into tableau $\mathbf{T}'$ bumps the diagonal $j$ in cell $x$, which is inserted as a $j'$ into 
cell $z$, possibly bumping $j$ after that.
Thus, $\mathbf{T} \leftsquigarrow b_h$ falls under Case (2a) of the ``primed'' crystal rules with the bold $i'$ in cell 
$z$ (note that there cannot be any $j'$ in cell $(z*)_E$ of the tableau $(\mathbf{T} \leftsquigarrow b_h)*$). 
Since $\mathbf{T} \leftsquigarrow b_h$ and $\mathbf{T}' \leftsquigarrow b_h$ differ only by the cell $z$, 
$f_i(\mathbf{T} \leftsquigarrow b_h) = \mathbf{T}' \leftsquigarrow b_h$.

\smallskip

\noindent
\textbf{Case 2.4.2.}
Suppose that there is a letter $i$ in cell $z$ of column $r+1$ of $\mathbf{T}$.
Note that cell $z$ can only be in rows $1, \ldots, r-1$ and thus $z_{SW}$ contains an element $< i$.
Thus, during the insertion process of $b_h$ into $\mathbf{T}$, diagonal bold $i$ from cell $x$ is inserted as bold $i'$ 
into cell $z$, bumping the $i$ from cell $z$ into cell $z_S$ (possibly bumping $j$ afterwards).
On the other hand, inserting $b_h$ into $\mathbf{T}'$ bumps the diagonal $j$ from cell $x$ into cell $z_S$ 
as a $j'$ (possibly bumping $j$ afterwards).
Thus, $\mathbf{T} \leftsquigarrow b_h$ falls under Case (1) of the ``primed'' crystal rules with the bold $i'$ in cell 
$z$, and so $f_i(\mathbf{T} \leftsquigarrow b_h) = \mathbf{T}' \leftsquigarrow b_h$.

\smallskip

\noindent
\textbf{Case 2.5.} 
Suppose that $x$ is a diagonal cell (in row $r$ and column $r$) and that during the insertion of $b_h$ into $\mathbf{T}$, an 
unprimed $i$ is inserted into row $r$. In this case, the entry in cell $x_E$ has to be $j$ and the diagonal 
cell $x_{ES}$ must be empty. Inserting $i$ into row $r$ of $\mathbf{T}$ bumps a $j$ from cell $x_E$ into 
cell $x_{ES}$. On the other hand, inserting $i$ into row $r$ of $\mathbf{T}'$ bumps a $j$ from the diagonal cell $x$, 
which in turn is inserted as a $j'$ into cell $x_E$, which bumps $j$ from cell $x_E$ into cell $x_{ES}$.
Thus, $\mathbf{T} \leftsquigarrow b_h$ falls under Case (2b) of the crystal rules with bold $i$ in cell $x_E$ and 
$y= x_{ES}$, and so $f_i(\mathbf{T} \leftsquigarrow b_h) = \mathbf{T}' \leftsquigarrow b_h$.

\smallskip

\noindent
\textbf{Case 3.}
Suppose that $\mathbf{T}$ falls under Case (2b) or (2c) of the crystal operator rules.
That means $x_E$ contains the entry $j$ or is empty and $x_S$ contains the entry $j'$ or $j$. There is a chain of 
letters $j'$ and $j$ in $\mathbf{T}$ starting from $x_S$ and ending on a box $y$.
According to the induction hypothesis, $y$ is either on the diagonal and contains the entry $j$ or $y$ is not on the diagonal and 
contains the entry $j'$. The tableau $\mathbf{T}' = f_i (\mathbf{T})$ has $j'$ in cell $x$ and $j$ in cell $y$.
We are interested in the case when inserting $b_h$ into $\mathbf{T}$ affects cell $x$ or affects some element of the chain.
Let $r_x$ and $c_x$ be the row and the column index of cell $x$, and $r_y$, $c_y$ are defined accordingly.
Note that during the insertion process, $j'$ cannot be inserted into columns $c_y,\ldots, c_x$ and $j$ cannot be
inserted into rows $r_x +1,\ldots, r_y$, since otherwise $\mathbf{T} \leftsquigarrow b_h$ would not be a primed tableau.

\smallskip

\noindent
\textbf{Case 3.1.} 
Suppose the bold $i$ in cell $x$ (of row $r_x$ and column $c_x$) of $\mathbf{T}$ is row-bumped by 
an unprimed element $d < i$ or column-bumped by a primed element $d' < i$.
Note that in this case, bold $i$ in row $r_x$ is the only $i$ in this row, so row $r_x+1$ cannot contain 
any letter $j$. Therefore the entry in cell $x_S$ must be $j'$.
In tableau $\mathbf{T}$, the bumped bold $i$ is inserted into cell $x_S$ and $j'$ is bumped from cell $x_S$ 
into column $c_x+1$, reducing the chain of letters $j'$ and $j$ by one. Notice that since $x_E$ either contains a $j$ 
or is empty,  $j'$ cannot be bumped into a position to the right of $x_S$, so Case (1) of the crystal 
rules for $\mathbf{T} \leftsquigarrow b_h$ cannot occur.  As for $\mathbf{T}'$, inserting $d$ into row $r_x$ (or inserting $d'$ into 
column $c_x$) just bumps $j'$ into column $c_x+1$, thus reducing the length of the chain by one in that tableau as well.
Note that in the case when the length of the chain is one (i.e. $y=x_S$), we would end up in Case (2a) of the crystal 
rules after the insertion. Otherwise, we are still in Case (2b) or (2c). 
In both cases, $f_i(\mathbf{T} \leftsquigarrow b_h) = \mathbf{T}' \leftsquigarrow b_h$.

\smallskip

\noindent
\textbf{Case 3.2.}
Suppose a letter $i$ is inserted into the same row as $x$ (in row $r_x$).
In this case, $x_E$ must contain a $j$ (otherwise the bold $i$ would not be in cell $x$).
After inserting $b_h$ into $\mathbf{T}$, the bold $i$ moves to cell $x_E$ (note that there cannot be a $j'$ to the right of 
$x_E$) and $j$ from $x_E$ is bumped to cell $x_{ES}$, thus the chain now starts at $x_{ES}$.
As for  $\mathbf{T}'$, inserting $i$ into the row $r_x$ moves $j'$ from cell $x$ to the cell $x_E$ and moves $j$ 
from cell $x_E$ to cell $x_{ES}$. Thus, $f_i(\mathbf{T} \leftsquigarrow b_h) = \mathbf{T}' \leftsquigarrow b_h$.

\smallskip

\noindent
\textbf{Case 3.3.}
Consider the chain of letters $j$ and $j'$ in $\mathbf{T}$.
Suppose an element of the chain $z \neq x,y$ is row-bumped by an element $d < j$ or is column-bumped 
by an element $d'<j'$. The bumped element $z$ (of row $r_z$ and column $c_z$) must be a ``corner'' element of the chain, 
i.e. in $\mathbf{T}$ the entry in the boxes must be $c(z)=j', \ c(z_E) = j$ and $c(z_S)$ must be either $j$ or $j'$.
Therefore, inserting $b_h$ into $\mathbf{T}$ bumps $j'$ from box $z$ to box $z_E$ and bumps $j$ from
box $z_E$ to box $z_{ES}$, and inserting $b_h$ into $\mathbf{T}'$ has exactly the same effect.
Thus, there is still a chain of letters $j$ and $j'$ from $x_S$ to $y$ in $\mathbf{T}$ and $\mathbf{T}'$, and 
$f_i(\mathbf{T} \leftsquigarrow b_h) = \mathbf{T}' \leftsquigarrow b_h$.

\smallskip

\noindent
\textbf{Case 3.4.}
Suppose $\mathbf{T}$ falls under Case (2c) of the crystal rules (i.e. $y$ is not a diagonal cell) and during the insertion of 
$b_h$ into $\mathbf{T}$, $j'$ in cell $y$ is row-bumped (resp. column-bumped) by an element $d<j'$ (resp. $d'<j'$).
Since $y$ is the end of the chain of letters $j$ and $j'$, $y_S$ must be empty.
Also, since it is bumped, the entry in $y_E$ must be $j$.
Thus, inserting $b_h$ into $\mathbf{T}$ bumps $j'$ from cell $y$ to cell $y_E$ and bumps $j$ from cell 
$y_E$ into row $r_y+1$ and column $\leqslant c_y$. On the other hand, inserting $b_h$ into $\mathbf{T}'$ bumps $j$ from
cell $y$ into row $r_y+1$ and column $\leqslant c_y$. The chain of letters $j$ and $j'$ now ends at $y_E$ and 
$f_i(\mathbf{T} \leftsquigarrow b_h) = \mathbf{T}' \leftsquigarrow b_h$.

\smallskip

\noindent
\textbf{Case 3.5.}
Suppose $\mathbf{T}$ falls under Case (2b) of the crystal rules (i.e. $y$ with entry $j$ is a diagonal cell) and during 
the insertion of $b_h$ into $\mathbf{T}$, $j$ in cell $y$ is row-bumped by an element $d < j$.
In this case, the cell $y_E$ must contain the entry $j$. Thus, inserting $b_h$ into $\mathbf{T}$ bumps $j$ from cell 
$y$ (making it $j'$) to cell $y_E$ and bumps $j$ from cell $y_E$ to the diagonal cell $y_{ES}$. On the other hand,
inserting $b_h$ into $\mathbf{T}'$ has exactly the same effect. The chain of letters $j$ and $j'$ now ends at 
the diagonal cell $y_{ES}$, so $\mathbf{T}\leftsquigarrow b_h$ falls under Case (2b) of the crystal rules and 
$f_i(\mathbf{T} \leftsquigarrow b_h) = \mathbf{T}' \leftsquigarrow b_h$.

\smallskip

\noindent
\textbf{Case 4.} 
Suppose the bold $i$ in tableau $\mathbf{T}$ is a primed $i$.
We use the transposition operation on $\mathbf{T}$, and the resulting tableau $\mathbf{T}^*$ falls under one of the cases 
of the crystal operator rules. When $b_h$ is inserted into $\mathbf{T}$, we can easily translate the insertion process to 
the transposed tableau $\mathbf{T}^*$ so that $[\mathbf{T}^* \leftsquigarrow (b_h+1)'] = [\mathbf{T} \leftsquigarrow b_h]^*$: 
the letter $(b_h+1)'$ is inserted into the first column of $\mathbf{T}^*$, and all other insertion rules stay exactly same, 
with one exception -- when the diagonal element $d'$ is column-bumped from the diagonal cell of 
$\mathbf{T}^*$, the element $d'$ becomes $(d-1)$ and is inserted into the row below.
Notice that the primed reading word of $\mathbf{T}$ becomes an unprimed reading word of $\mathbf{T}^*$.
Thus, the bold $i$ in tableau $\mathbf{T}^*$ corresponds to the rightmost unbracketed $i$ in the \textit{unprimed} 
reading word of $\mathbf{T}^*$. Therefore, everything we have deduced in Cases 1-3 from the fact that bold $i$ is in the 
cell $x$ will remain valid here. Given $f_i(\mathbf{T}^*) = \mathbf{T}'^*$, we want to make sure that 
$f_i(\mathbf{T}^* \leftsquigarrow (b_h+1)') = \mathbf{T}'^* \leftsquigarrow (b_h+1)'$.

The insertion process of $(b_h+1)'$ into $\mathbf{T}^*$ falls under one of the cases above and the proof of 
$f_i(\mathbf{T}^* \leftsquigarrow (b_h+1)') = \mathbf{T}'^* \leftsquigarrow (b_h+1)'$ is exactly the same as the proof in 
those cases. We only need to check the cases in which the diagonal element might be affected differently in the 
insertion process of $(b_h+1)'$ into $\mathbf{T}^*$ compared to the insertion process of $(b_h+1)'$ into $\mathbf{T}'^*$.
Fortunately, this never happens: in Case 1 neither $x$ nor $x_E$ could be diagonal elements; in Cases 2 and 3 $x$ 
cannot be on the diagonal, and if $x_E$ is on diagonal, it must be empty.
Following the proof of those cases, $f_i(\mathbf{T}^* \leftsquigarrow (b_h+1)') = \mathbf{T}'^* \leftsquigarrow (b_h+1)'$.

\section{Proof of Theorem~\ref{theorem.main3}}
\label{section.proof main3}

This appendix provides the proof of Theorem~\ref{theorem.main3}. In this section we set $j=i+1$.
We begin with two preliminary lemmas.

\subsection{Preliminaries}

\begin{lemma}
\label{lemma.chains}
	Consider a shifted tableau $\mathbf{T}$.
	\begin{enumerate}
	\item Suppose tableau $\mathbf{T}$ falls under Case (2c) of the $f_i$ crystal operator rules, that is, there is a chain of 
	letters $j$ and $j'$ starting from the bold $i$ in cell $x$ and ending at $j'$ in cell $x_H$. 
	Then for any cell $z$ of the chain containing $j$, the cell $z_{NW}$ contains $i$.
	\item  Suppose tableau $\mathbf{T}$ falls under Case (2b) of the $f_i$ crystal operator rules, that is, there is a chain of 
	letters $j$ and $j'$ starting from the bold $i$ in cell $x$ and ending at $j$ in the diagonal cell $x_H$. 
	Then for any cell $z$ of the chain containing $j$ or $j'$, the cell $z_{NW}$ contains $i$ or $i'$ respectively. 
	\end{enumerate}
\end{lemma}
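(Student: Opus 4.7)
I would approach this by combining the structural constraints of primed tableaux with the defining property of $x$ as the location of the rightmost unbracketed $i$ in the reading word $\mathrm{rw}(\mathbf{T})$. The plan is to first pin down the chain's local form, then handle the upper bound $c(z_{NW}) \leq i$ by elementary constraints, and finally attack the lower bound $c(z_{NW}) \geq i$ via a reading-word argument.

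First I would note that the primed-tableau rules (weakly increasing rows and columns, at most one $j$ per column, at most one $j'$ per row) force the chain's transitions to be highly constrained: from a cell containing $j$ the only admissible continuation is a W-step (to a cell of value $j$ or $j'$), while from a cell containing $j'$ only an S-step is possible. This makes the chain a simple zigzag from the tail $x = z_0$ (carrying the bold $i$) to the head $x_H = z_k$. Moreover, comparing positions gives $(z_t)_{NW} = (z_{t-1})_W$ after an S-step and $(z_t)_{NW} = ((z_{t-1})_{NW})_W$ after a W-step, which lets me trace NW cells along the chain. The upper bound $c(z_{NW}) \leq i$ then follows routinely: the column constraint together with the one-$j$-per-column rule forces $c(z_N) \leq j'$; when $c(z_N) = j'$, the one-$j'$-per-row rule applied to row $r-1$ then forces $c(z_{NW}) \leq i$. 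The analogous argument, after conjugating to $\mathbf{T}^*$, yields $c(z_{NW}) \leq i'$ for the $j'$-case of Case (2b).

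The main obstacle is the lower bound. My plan is a contradiction argument. Suppose $z$ is a chain cell with $c(z) = j$ and $c(z_{NW}) < i$. The $j$ contributed by $z$ lies in the unprimed part of $\mathrm{rw}(\mathbf{T})$ at row $r_z$, strictly earlier in reading order than the row of $x$. This $j$ must be paired in the bracketing with some $i$ appearing later. The idea is to compare with the hypothetical configuration where $c(z_{NW}) = i$: the extra $i$ at $z_{NW}$ would appear (in the unprimed part at row $r_z - 1$, hence after $z$'s $j$) exactly in a position that absorbs one additional pending $j$. Removing this $i$ therefore makes one pending $j$ steal an $i$ that originally matched a $j$ further to the right, cascading so that at least one $i$ strictly to the right of $x$ in $\mathrm{rw}(\mathbf{T})$ ends up unbracketed; this contradicts that $x$ is the rightmost unbracketed $i$. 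For the $j'$-cells in Case (2b), I would conjugate to $\mathbf{T}^*$ so that a $j'$-cell becomes an unprimed $j$-cell and the NW-direction in $\mathbf{T}$ becomes the corresponding direction in $\mathbf{T}^*$; the Case (2b) hypothesis that the head lies on the diagonal of $\mathbf{T}$ translates into a valid instance of the unprimed claim on $\mathbf{T}^*$, reducing to part (1).

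The hard part will be making the bracketing-swap fully precise in general, since the pairing of $j$'s with $i$'s is a global matching rather than a local one, and a single change at $c(z_{NW})$ can reshuffle many pairs at once. I plan to handle this by induction on the chain index $t$: at each step I track where the ``first unbracketed $i$'' sits after the hypothetical change, and use the zigzag transitions $(z_t)_{NW} = (z_{t-1})_W$ or $((z_{t-1})_{NW})_W$ together with the upper bound already proved to confine the discrepancy to a single cell, producing the desired contradiction.
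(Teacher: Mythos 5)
Your overall strategy is the same as the paper's: every $j$ (resp.\ $j'$) in the chain must be bracketed in $\mathrm{rw}(\mathbf{T})$, since a $j$ preceding the rightmost unbracketed $i$ cannot itself be unbracketed, and the tableau axioms then leave essentially no room for its bracketing partner except the NW cell. Your zigzag description of the chain and the upper bound $c(z_{NW})\leqslant i$ are correct and cleanly argued. The problem is that the heart of the lemma is the lower bound, and there your argument is not yet a proof. Comparing $\mathbf{T}$ with a ``hypothetical configuration where $c(z_{NW})=i$'' and asserting that removing that $i$ cascades into an unbracketed $i$ to the right of $x$ is precisely the global re-matching statement that needs proving; you acknowledge this yourself (``the hard part will be making the bracketing-swap fully precise''), and the induction you sketch does not state the inductive hypothesis or explain why the discrepancy stays confined to a single cell. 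As written, the key step is missing.

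The paper closes this gap with a direct pigeonhole rather than a swap: if rows $r_x,\dots,r_z$ contain $n$ letters $j$, then rows $r_x,\dots,r_z-1$ must contain $n$ non-bold letters $i$ serving as bracketing partners; inducting along the chain, the columns $c_z,\dots,c_x$ already each contain one such $i$, so the next partner must lie in column $c_z-1$, and the only admissible row is $r_z-1$, i.e.\ the cell $z_{NW}$. Your contradiction argument could be recast in this form. Separately, your reduction of the $j'$-cells in part (2) to part (1) via conjugation does not literally work: the paper's conjugation shifts every entry by half a unit, so the bold unprimed $i$ at $x$ becomes a $j'$ in $\mathbf{T}^*$ and the resulting configuration is not an instance of part (1). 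The paper instead argues directly in the primed reading word: the letters $j$ contributed by the chain's $j'$-cells cannot be bracketed with unprimed letters $i$ (those are all consumed by the chain's unprimed letters $j$), hence must be bracketed with letters $i'$ in columns to their left, and the same column-by-column pigeonhole places each such $i'$ at $z_{NW}$.
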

\Yboxdim 13pt
\begin{equation*}
\young(\cdot\cdot\cdot\cdot\cdot\cdot\cdot\boldi,:\cdot\cdot\cdot iii\jprime,::\cdot\cdot\jprime jjj,:::\cdot\jprime) \qquad
\young(\cdot\cdot\cdot\cdot\iprime\boldi,:\cdot\iprime ii\jprime,::i\jprime jj,:::j)
\end{equation*}

\begin{proof}
The proof of the first part is based on the observation that every $j$ in the chain must be bracketed with some $i$ in the 
reading word $\mathrm{rw}(\mathbf{T})$. 
Moreover, if the bold $i$ is located in row $r_x$ and rows $r_x, r_x+1,\ldots, r_z$ contain $n$ letters $j$,
then rows $r_x, r_x +1,\ldots, r_z-1$ must contain exactly $n$ non-bold letters $i$.
To prove that these elements $i$ must be located in the cells to the North-West of the cells containing $j$, we
proceed by induction on $n$. When we consider the next cell $z$ containing $j$ in the chain that must be bracketed, 
notice that the columns $c_z, c_z+1,\ldots, c_x$ already contain an $i$, and thus we must put the next $i$ in column 
$c_z -1$; there is no other row to put it than $r_z-1$. Thus, $z_{NW}$ must contain an $i$.

This line of logic also works for the second part of the lemma. We can show that for any cell $z$ of the chain containing 
$j$, the cell $z_{NW}$ must contain an $i$. As for cells $z$ containing $j'$, we can again use the fact that the corresponding 
letters $j$ in the primed reading word of $\mathbf{T}$ must be bracketed.
Notice that these letters $j'$ cannot be bracketed with unprimed letters $i$, since all unprimed letters $i$ are already bracketed 
with unprimed letters $j$. Thus, $j'$ must be bracketed with some $i'$ from a column to its left.
Let columns $1,2, \ldots, c_z$ contain $m$ elements $j'$.
Using the same induction argument as in the previous case, we can show that $z_{NW}$ must contain $i'$. 
\end{proof}

Next we need to figure out how $y$ in the raising crystal operator $e_i$ is related to the lowering operator rules
for $f_i$.
 
\begin{lemma}
\label{lemma.y}
Consider a pair of tableaux $\mathbf{T}$ and $\mathbf{T}' = f_i(\mathbf{T})$.
	\begin{enumerate}
	\item If tableau $\mathbf{T}$ (in case when bold $i$ in $\mathbf{T}$ is unprimed) or $\mathbf{T}^*$ 
	(if bold $i$ is primed) falls under Case (1) of the $f_i$ crystal operator rules, then cell $y$ of 
	the $e_i$ crystal operator rules is cell $x_E$ of $\mathbf{T}'$ or $(\mathbf{T}')^*$, respectively.
	
	\item If tableau $\mathbf{T}$ (in case when bold $i$ in $\mathbf{T}$ is unprimed) or $\mathbf{T}^*$ 
	(if bold $i$ is primed) falls under Case (2a) of the $f_i$ crystal operator rules, then cell $y$ of 
	the $e_i$ crystal operator rules is located in cell $x$ of $\mathbf{T}'$ or $(\mathbf{T}')^*$, respectively.
	
	\item If tableau $\mathbf{T}$ falls under Case (2b) of the $f_i$ crystal operator rules, then cell $y$ of 
	the $e_i$ crystal operator rules is cell $x^*$ of $(\mathbf{T}')^*$.
	
	\item If tableau $\mathbf{T}$ (in case when bold $i$ in $\mathbf{T}$ is unprimed) or $\mathbf{T}^*$ 
	(if bold $i$ is primed) falls under Case (2c) of the $f_i$ crystal operator rules, then cell $y$ of 
	the $e_i$ crystal operator rules is cell $x_H$ of $\mathbf{T}'$ or $(\mathbf{T}')^*$, respectively.
	\end{enumerate}
\end{lemma}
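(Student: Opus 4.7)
The plan is to go through the four cases of the $f_i$ rules one at a time, and in each case identify the cell in $\mathbf{T}' = f_i(\mathbf{T})$ (or in $(\mathbf{T}')^*$ when the bold $j$ turns out to be primed) that houses the leftmost unbracketed $j$ in the reading word. The starting observation common to all cases is that $f_i$ modifies only the entries of $x$, of $x_E$ (in Case 1), and of the chain cells (in Cases 2b and 2c); outside this small neighbourhood the reading word of $\mathbf{T}'$ agrees with that of $\mathbf{T}$. Since the bold $i$ at $x$ realises the rightmost unbracketed $i$ of $\mathrm{rw}(\mathbf{T})$, every $j$ strictly to its right in the reading word is already bracketed, so locating the leftmost unbracketed $j$ of $\mathrm{rw}(\mathbf{T}')$ becomes a local analysis near the modified cells.

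For Cases 1 and 2a the argument should be a short local check. In Case 1 the swap turns the bold $i$ at $x$ into a $j'$ and the $j'$ at $x_E$ into an unprimed $j$; I would verify that the new $j'$ at $x$ inherits the bracketing partner of the removed $j'$ at $x_E$ in the primed reading word, leaving the new $j$ at $x_E$ as the leftmost unbracketed $j$. In Case 2a the bold $i$ at $x$ is simply replaced by an unprimed $j$, which is immediately the leftmost unbracketed $j$ in $\mathrm{rw}(\mathbf{T}')$.

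The heart of the argument is in Cases 2b and 2c, where Lemma~\ref{lemma.chains} will be essential: it pairs every $j$ in the chain of $\mathbf{T}$ with an $i$ at its NW (and in Case 2b also pairs every $j'$ in the chain with an $i'$ at its NW), providing a bracketing partner for every chain entry except the bold $i$ at $x$. After applying $f_i$, this ``leftover'' condition propagates along the chain. In Case 2c the chain grows by one cell on the NE end (a new $j'$ at $x$) and the SW endpoint $x_H$ flips from $j'$ to $j$; the bracketing analysis will show the mismatch slides to the new $j$ at $x_H$, so $y = x_H$. In Case 2b the chain similarly grows by one cell at $x$ while retaining the diagonal $j$ at $x_H$; the only candidate without a bracketing partner is the new $j'$ at $x$, and since it is a primed letter the bold $j$ of $\mathbf{T}'$ must be read in $(\mathbf{T}')^*$, giving $y = x^*$. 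The case of primed bold $i$ in $\mathbf{T}$ reduces to the above by the usual conjugation.

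The main obstacle I anticipate is the careful bookkeeping needed to follow bracketing across the join between the primed reading word (read by columns, right to left) and the unprimed reading word (read by rows, bottom to top). In the chain cases one must also control what happens to the $j'$'s of the chain in Case 2c, whose bracketing partners are not directly furnished by Lemma~\ref{lemma.chains}; pinning down these partners, and in particular verifying that the sliding argument does not upset any partner outside the immediate neighbourhood of the chain, will require the most delicate bookkeeping.
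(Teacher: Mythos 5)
Your proposal follows essentially the same route as the paper: a case-by-case tracking of how $\mathrm{rw}(\mathbf{T})$ changes under $f_i$, with short local checks in Cases (1) and (2a), Lemma~\ref{lemma.chains} supplying the mutual bracketing of the letters the bold $i$ ``passes'' in the chain cases, and conjugation disposing of the primed bold $i$. One slip to fix: since the residual word after bracketing has the form $i^a j^b$, it is every $j$ strictly to the \emph{left} of the rightmost unbracketed $i$ that is already bracketed (not to its right); this corrected statement is exactly what lets you conclude that the newly created unbracketed $j$, sitting at or to the left of the old bold position, is the \emph{leftmost} unbracketed $j$ of $\mathrm{rw}(\mathbf{T}')$.
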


\begin{proof}
In all the cases above, we need to compare reading words $\mathrm{rw}(\mathbf{T})$ and $\mathrm{rw}(\mathbf{T}')$. 
Since $f_i$ affects at most two boxes of $\mathbf{T}$, it is easy to track how the reading word $\mathrm{rw}(\mathbf{T})$ 
changes after applying $f_i$. We want to check where the bold $j$ under $e_i$ ends up in $\mathrm{rw}(\mathbf{T}')$ 
and in $\mathbf{T}'$, which allows us to determine the cell $y$ of the $e_i$ crystal operator rules.

\smallskip

\noindent
\textbf{Case 1.1.} 
Suppose $\mathbf{T}$ falls under Case (1) of the $f_i$ crystal operator rules, that is, the bold $i$ in cell $x$ is to the left of 
$j'$ in cell $x_E$. Furthermore, $f_i$ acts on $\mathbf{T}$ by changing the entry in $x$ to $j'$ and by changing the entry
in $x_E$ to $j$. In the reading word $\mathrm{rw}(\mathbf{T})$, this corresponds to moving the $j$ corresponding to $x_E$ 
to the left and changing the bold $i$ (the rightmost unbracketed $i$) corresponding to cell $x$ to $j$ (that then corresponds
to $x_E$). Moving a bracketed $j$ in $\mathrm{rw}(\mathbf{T})$ to the left does not change the $\{i,j\}$ bracketing, and 
thus the $j$ corresponding to $x_E$ in $\mathrm{rw}(\mathbf{T}')$ is still the leftmost unbracketed $j$. Therefore, this $j$ 
is the bold $j$ of $\mathbf{T}'$ and is located in cell $x_E$.

\smallskip

\noindent
\textbf{Case 1.2.}
Suppose the bold $i$ in $\mathbf{T}$ is primed and $\mathbf{T}^*$ falls under Case (1) of the $f_i$ crystal operator 
rules. After applying lowering crystal operator rules to $\mathbf{T}^*$ and conjugating back, the bold primed $i$ in cell 
$x^*$ of $\mathbf{T}$ changes to an unprimed $i$, and the unprimed $i$ in cell $(x^*)_S$ of $\mathbf{T}$ changes 
to $j'$. In terms of the reading word of $\mathbf{T}$, it means moving the bracketed $i$ (in the unprimed reading word) 
corresponding to $(x^*)_S$ to the left so that it corresponds to $x^*$, and then changing the bold $i$ (in the 
primed reading word) corresponding to $x^*$ into the letter $j$ corresponding to $(x^*)_S$.
The first operation does not change the bracketing relations between $i$ and $j$, and thus the leftmost unbracketed $j$ 
in $\mathrm{rw}(\mathbf{T}')$ corresponds to $(x^*)_S$. Hence the bold unprimed $j$ is in cell $x_E$ of 
$(\mathbf{T}')^*$.

\smallskip

\noindent
\textbf{Case 2.1.}
If $\mathbf{T}$ falls under Case (2a) of the $f_i$ crystal operator rules, $f_i$ just changes the entry in $x$ from $i$ to $j$.
The rightmost unbracketed $i$ in the reading word of $\mathbf{T}$ changes to the leftmost unbracketed $j$ in 
$\mathrm{rw}(\mathbf{T}')$. Thus, the bold $j$ in $\mathrm{rw}(\mathbf{T}')$ corresponds to cell $x$.

\smallskip

\noindent
\textbf{Case 2.2.}
The case when $\mathbf{T}^*$ falls under Case (2a) of the $f_i$ crystal operator rules is the same as the previous case.

\smallskip

\noindent
\textbf{Case 3.}
Suppose $\mathbf{T}$ falls under Case (2b) of $f_i$ crystal operator rules. Then there is a chain starting from cell 
$x$ (of row $r_x$ and column $c_x$) and ending at the diagonal cell $z$ (of row and column $r_z$) consisting of 
elements $j$ and $j'$. Applying $f_i$ to $\mathbf{T}$ changes the entry in $x$ from $i$ to $j'$.
In $\mathrm{rw}(\mathbf{T})$ this implies moving the bold $i$ from the unprimed reading word to the left through 
elements $i$ and $j$ corresponding to rows $r_x, r_x +1,\ldots, r_z$, then through elements $i$ and $j$ in the primed 
reading word corresponding to columns $c_z-1, \ldots, c_x$, and then changing that $i$ to $j$ which corresponds to 
cell $x$. But according to Lemma~\ref{lemma.chains}, the letters $i$ and $j$ in these rows and columns are all bracketed 
with each other, since for every $j$ or $j'$ in the chain there is a corresponding $i$ or $i'$ in the North-Western cell. 
(Notice that there cannot be any other letter $j$ or $j'$ outside of the chain in rows $r_x +1,\ldots, r_z$ and in columns 
$c_z-1, \ldots, c_x$.) Thus, moving the bold $i$ to the left in $\mathrm{rw}(\mathbf{T})$ does not change the bracketing 
relations. Changing it to $j$ makes it the leftmost unbracketed $j$ in $\mathrm{rw}(\mathbf{T}')$.
Therefore, the bold $j$ in $\mathrm{rw}(\mathbf{T}')$ corresponds to the primed $j$ in cell $x$ of $\mathbf{T}'$, 
and the cell $y$ of the $e_i$ crystal operator rules is thus cell $x^*$ in $(\mathbf{T}')^*$.

\smallskip

\noindent
\textbf{Case 4.1.}
Suppose $\mathbf{T}$ falls under Case (2c) of the $f_i$ crystal operator rules. There is a chain starting from cell $x$ 
(in row $r_x$ and column $c_x$) and ending at cell $x_H$ (in row $r_H$ and column $c_H$) consisting of elements 
$j$ and $j'$. Applying $f_i$ to $\mathbf{T}$ changes the entry in $x$ from $i$ to $j'$ and changes the entry in 
$x_H$ from $j'$ to $j$. Moving $j'$ from cell $x_H$ to cell $x$ moves the corresponding bracketed $j$ in the reading 
word $\mathrm{rw}(\mathbf{T})$ to the left, and thus does not change the $\{i,j\}$ bracketing relations in 
$\mathrm{rw}(\mathbf{T}')$. On the other hand, moving the bold $i$ from cell $x$ to cell $x_H$ and then changing it to 
$j$ moves the bold $i$ in $\mathrm{rw}(\mathbf{T})$ to the right through elements $i$ and $j$ corresponding to rows 
$r_x, r_x +1,\ldots, r_H$, and then changes it to $j$. Note that according to Lemma~\ref{lemma.chains}, each $j$ in 
rows $r_x+1, r_x +2,\ldots, r_H$ has a corresponding $i$ from rows $r_x, r_x +1,\ldots, r_H - 1$ that it is
bracketed with, and vise versa. Thus, moving the bold $i$ to the position corresponding to $x_H$ does not change the 
fact that it is the rightmost unbracketed $i$ in $\mathrm{rw}(\mathbf{T})$.
Thus, the bold $j$ in $\mathrm{rw}(\mathbf{T}')$ corresponds to the unprimed $j$ in cell $x_H$ of $\mathbf{T}'$.

\smallskip

\noindent
\textbf{Case 4.2.}
Suppose $\mathbf{T}$ has a primed bold $i$ and $\mathbf{T}^*$ falls under Case (2c) of the $f_i$ crystal operator rules.
This means that there is a chain (expanding in North and East directions) in $\mathbf{T}$ starting from $i'$ in cell $x^*$
and ending in cell $x_H^*$ with entry $i$ consisting of elements $i$ and $j'$. The crystal operator $f_i$ changes 
the entry in cell $x^*$ from $i'$ to $i$ and changes the entry in $x_H^*$ from $i$ to $j'$. For the reading word 
$\mathrm{rw}(\mathbf{T})$ this means moving the bracketed $i$ in the unprimed reading word to the right 
(which does not change the bracketing relations) and moving the bold $i$ in the primed reading word through letters
$i$ and $j$ corresponding to columns $c_x, c_x +1 ,\ldots, c_H$, which are bracketed with each other according to 
Lemma~\ref{lemma.chains}.
Thus, after changing the bold $i$ to $j$ makes it the leftmost unbracketed $j$ in $\mathrm{rw}(\mathbf{T}')$. Hence the
bold primed $j$ in $\mathbf{T}'$ corresponds to cell $x_H^*$.
Therefore $y$ from the $e_i$ crystal operator rules is cell $x_H$ of $(\mathbf{T}')^*$.
\end{proof}

\subsection{Proof of Theorem~\ref{theorem.main3}}
Let $\mathbf{T'} = f_i(\mathbf{T})$.

\smallskip

\noindent
\textbf{Case 1.} 
If $\mathbf{T}$ (or $\mathbf{T}^*$) falls under Case (1) of the $f_i$ crystal operator rules, then according to 
Lemma~\ref{lemma.y}, $e_i$ acts on $\mathbf{T}'$ (or on $(\mathbf{T}')^*$) by changing the entry in cell 
$y_W = x$ back to $i$ and changing the entry in $y = x_E$ back to $j'$. Thus, the statement of the theorem is true.

\smallskip

\noindent
\textbf{Case 2.} 
If $\mathbf{T}$ (or $\mathbf{T}^*$) falls under Case (2a) of the $f_i$ crystal operator rules, then according to 
Lemma~\ref{lemma.y}, $e_i$ acts on $\mathbf{T}'$ (or on $(\mathbf{T}')^*$) by changing the entry in the cell 
$y = x$ back to $i$. Thus, the statement of the theorem is true.

\smallskip

\noindent
\textbf{Case 3.} If $\mathbf{T}$ falls under Case (2b) of the $f_i$ crystal operator rules, then according to 
Lemma~\ref{lemma.y}, $e_i$ acts on cell $y=x^*$ of $(\mathbf{T}')^*$. Note that according to Lemma~\ref{lemma.chains}, 
there is a maximal chain of letters $i$ and $j'$ in $(\mathbf{T}')^*$ starting at $y$ and ending at a diagonal cell $y_T$. 
Thus, $e_i$ changes the entry in cell $y=x^*$ in $(\mathbf{T}')^*$ from $j$ to $j'$, so the entry in cell $x$ in 
$\mathbf{T}'$ goes back from $j'$ to $i$. Thus, the statement of the theorem is true.

\smallskip

\noindent
\textbf{Case 4.} If $\mathbf{T}$ (or $\mathbf{T}^*$) falls under Case (2c) of the $f_i$ crystal operator rules, then according 
to Lemma~\ref{lemma.y}, $e_i$ acts on cell $y=x_H$ of $\mathbf{T}'$ (or of $(\mathbf{T}')^*$). Note that according 
to Lemma~\ref{lemma.chains}, there is a maximal (since $c(x_E) \neq j'$ and $c(x_E) \neq i$) chain of letters $i$ and $j'$
in $\mathbf{T}'$ (or $(\mathbf{T}')^*$) starting at $y$ and ending at cell $y_T = x$.
Thus, $e_i$ changes the entry in cell $y=x_H$ in $(\mathbf{T}')^*$ from $j$ back to $j'$ and changes the entry in 
$y_T = x$ from $j'$ back to $i$. Thus, the statement of the theorem is true.

\bibliographystyle{plain}


\end{document}